\newtheorem{theorem}{Theorem}[section]
\newtheorem{lemma}[theorem]{Lemma}
\newtheorem{corollary}[theorem]{Corollary}
\newtheorem{proposition}[theorem]{Proposition}
\newcounter{maintheorem}
\theoremstyle{remark}
\newtheorem{remark}[theorem]{Remark}
\theoremstyle{definition}
\newtheorem{definition}[theorem]{Definition}
\newtheorem{example}[theorem]{Example}
\numberwithin{equation}{section}
\newcommand{\nn}[1]{{\left\vert\kern-0.25ex\left\vert\kern-0.25ex\left\vert #1
\right\vert\kern-0.25ex\right\vert\kern-0.25ex\right\vert}}
\renewcommand{\leq}{\leqslant}
\renewcommand{\geq}{\geqslant}
\DeclareMathOperator{\Ric}{Ric}
\DeclareMathOperator{\Vol}{Vol}
\newcounter{smallromans}
\begin{document}
\title[]{The porous medium equation on noncompact manifolds with nonnegative Ricci curvature:\\ a Green function approach}

\author[G.~Grillo]{Gabriele Grillo}
\address[G.~Grillo]{Politecnico di Milano, Dipartimento di Matematica, Piazza Leonardo da Vinci 32, 20133 Milano, Italy}
\email{gabriele.grillo@polimi.it}

\author[D.D.~Monticelli]{Dario D. Monticelli}
\address[D.D.~Monticelli]{Politecnico di Milano, Dipartimento di Matematica, Piazza Leonardo da Vinci 32, 20133 Milano, Italy}
\email{dario.monticelli@polimi.it}

\author[F.~Punzo]{Fabio Punzo}
\address[F.~Punzo]{Politecnico di Milano, Dipartimento di Matematica, Piazza Leonardo da Vinci 32, 20133 Milano, Italy}
\email{fabio.punzo@polimi.it}

%\thanks{The research of the authors has been partially supported by GNAMPA (INdAM -- Istituto Nazionale di Alta Matematica).}

\keywords{Porous medium equation, smoothing estimates, Green function, Riemannian manifolds, Ricci curvature, potential estimates}
\subjclass[2020]{35K55, 35D45, 47G40, 31C12, 58J35}
%\date{\today}

\begin{abstract} We consider the porous medium equation (PME) on complete noncompact manifolds $M$ of nonnegative Ricci curvature. We require nonparabolicity of the manifold and construct a natural space $X$ of functions, strictly larger than $L^1$, in which the Green function on $M$ appears as a weight, such that the PME admits a solution in the weak dual (i.e. potential) sense whenever the initial datum $u_0$ is nonnegative and belongs to $X$. Smoothing estimates are also proved to hold both for $L^1$ data, where they take into account the volume growth of Riemannian balls giving rise to bounds which are shown to be sharp in a suitable sense, and for data belonging to $X$ as well.
\end{abstract}
\maketitle

%-------------------------------------------------------%
%                                                       %
% 						INTRODUCTION 					%
%                                                       %
%-------------------------------------------------------%

\section{Introduction, preliminaries and statements of the main results}

The study of nonlinear diffusion of porous medium/fast diffusion type, namely of the problem
\begin{equation}\label{eucl}
\begin{cases}
\partial_t u - \Delta(u^m) = 0 & \text{ in } \; \mathbb R^n\times (0, \infty),\\
u = u_0  & \text{ in } \; \mathbb R^n\times \{0\}\,.
\end{cases}
\end{equation}
and related variants, where $m>0$, has a long history in the literature and is a very active topic till today. Since it is hopeless to give precise references on this wide topic, we limit ourselves to quote e.g. the monograph \cite{V}, which gives a fairly complete account of the results up to the date of its publication, and e.g. \cite{BF, BF2} and references quoted therein for some of the more recent work. Equation \eqref{eucl}, when $m>1$ as will be the case in the present work and in which the equation is labeled \it porous medium equation \rm (PME for short), is a nonlinear analogue of the heat equation in which the diffusion coefficient, or \it pressure\rm, is $v=mu^{m-1}$, hence a quantity which vanishes when $u$ does, and as a consequence it can be shown that compactly supported data correspond to solutions that are compactly supported for all times, in sharp contrast with solutions to the heat equation. On the other hand, there are also significant similarities between \eqref{eucl} and the heat equation as concerns other properties. For example, solutions to \eqref{eucl} are well defined for data in $L^p(\mathbb R^n)$ for all $p\in[1,+\infty]$, $L^p$ norms are nonincreasing along the evolution, the parabolic comparison principle holds, and \it smoothing effects \rm are valid, in the sense that $L^p$ ($p\in[1,+\infty)$) data give rise to \it bounded \rm solutions for all $t>0$, with quantitative bounds on the $L^\infty$ norm of the solution in terms of $t$ and of the $L^p$ norm of the initial datum.

On the other hand, the heat equation can be posed on a Riemannian manifold, and its study in that context is a fascinating topic, still being developed. Again, since it is hopeless to give an account for the impressive amount of research on such topic, we refer to \cite{Gri,Gri2, Gri3} and references quoted therein as in-depth surveys. It is worth noticing at least two very different situations, as concerns the heat kernel behaviour for large times. On the one hand, when sectional curvature is \it strictly negative\rm, the departure of the heat kernel behaviour from the Euclidean one is impressive. For example,
\[
K_{\mathbb H^n}(t,x,x)\underset{t\to+\infty}{\sim}\frac c{t^{3/2}}e^{-\frac{(n-1)^2}4t}
\]
for a suitable constant $c>0$, where $\mathbb H^n$ is the \it hyperbolic space \rm of dimension $n$ and $K_{\mathbb H^n}(t,x,x)$ its diagonal heat kernel, for $t>0, x\in\mathbb H^n$. This has to be compared with Euclidean behaviour
\[
K_{\mathbb R^n}(t,x,x)\underset{t\to+\infty}{\sim}\frac c{t^{n/2}},
\]
showing the dramatic change occurring. On the other hand, in the case of manifolds $M$ with \it nonnegative Ricci curvature \rm both similarities and differences are present, in fact it can be shown that
\[
K_{\mathbb M}(t,x,x)\asymp\frac c{\textrm{Vol}\,(B_{\sqrt t}(x))},
\]
where $B_s(x)$ is the Riemannian ball centered at $x\in M$ and of radius $s$, and Vol denotes the Riemannian volume. One recovers the Euclidean asymptotic behaviour in the case of manifolds with \it maximal volume growth\rm, but in general the long time decay can be slower than the Euclidean one. See \cite{LY} for this fundamental result.

In the nonlinear setting, a number of recent works have dealt with the analogue of \eqref{eucl} in the Riemannian setting, see e.g. \cite{BGV, GM1, GMP, GMP2, GMV, GMV2}. In fact, a series of new phenomena, significantly different from the ones occurring in the Euclidean case, occur in the nonlinear setting, similarly to what happens in the linear one, for example as concerns the asymptotic behaviour of solutions and the class of admissible initial data and initial traces. We also mention \cite{AT, AT2}, where \it weighted \rm diffusion equations of doubly nonlinear type are studied through different methods, which crucially require information on the isoperimetric profile of the manifold and apply, as stated, to equations with weights \it vanishing at infinity\rm. The nonnegatively curved case seems so far not studied at all, and it is our goal here to commence this study.

\smallskip\noindent
It is important to comment that the recent analysis of \cite{BBGM}, which improves the particular case dealt with in \cite{BBGG} and also uses potential methods for fractional nonlinear diffusion on manifolds which satisfy the weaker bound Ric$\,\ge-k$ for some $k\ge0$, is constrained to the validity of \it additional functional inequalities\rm, namely the (Euclidean-type) Faber-Krahn inequality, or equivalently the (Euclidean-type) Sobolev inequality, which of course typically does not hold in our setting. The validity of the Euclidean-type Faber-Krahn inequality is a very strong assumption, which implies Euclidean-type bounds for the heat kernel and the Green function, which \it need not hold \rm in our setting and are both crucial for the analysis of \cite{BBGM}, whose results are essentially confined to the case of \it nonpositively curved \rm manifolds. Therefore, the present analysis is entirely new and requires new methods and ideas, though some technical tools used in \cite{BBGM} are used also here. Here our main assumptions, besides the nonnegative Ricci one, concern \it volume growth of Riemannian balls\rm, see \eqref{noncollapsing}, \eqref{uniformvolume}, \eqref{integrablef}, which are qualitatively (mild) uniformity assumptions on such growth. These assumptions are crucial for our proofs, and also appear naturally in the proof that the space $L^1_G(M)$, which is the maximal one in which our results hold, is \it strictly larger than \rm $L^1(M)$, see Proposition \ref{Gprop}.
Note that in  \cite{BBGM}, in view of the Faber-Krahn inequality, the volume of balls is controlled from above by the Euclidean one if the radius is small enough, on the other hand, it is controlled from below by the Euclidean one for any radius. Therefore, such volume conditions are very different from our assumptions  \eqref{noncollapsing}, \eqref{uniformvolume}, \eqref{integrablef}. We also comment that %Let us mention another important difference between our method of proof and that in \cite{BBGM}.
in \cite{BBGM} the parabolic Harnack inequality and heat kernel estimates are exploited in various arguments, whereas the present geometrical setting allows to obtain directly the necessary estimates on the Green function and their use, see e.g. Lemma \ref{Greenest} and Proposition \ref{stimapr}.

\medskip
We shall consider here the problem:
\begin{equation}\label{e23f}
\begin{cases}
\partial_t u - \Delta(u^m) = 0 & \text{ in } \; M\times (0, \infty),\\
u = u_0  & \text{ in } \; M\times \{0\}\,,
\end{cases}
\end{equation}
where $m>1$ and $M$ is a complete, noncompact Riemannian manifold of nonnegative Ricci curvature, satisfying some further assumptions discussed in Section \ref{ass} below, which qualitatively amount to assuming non-collapse of the volume function and a further uniformity condition on volume. We collect in Section \ref{ex} a number of explicit cases in which such assumptions are satisfied. Besides, it will be crucial to assume that $M$ is \it nonparabolic\rm, i.e. that it admits a minimal, positive Green function $G(x,y)$, this being  equivalent to the fact that the (minimal) heat kernel $K(t,x,y)$ is integrable as $t\to+\infty$ for any $x,y\in M$, $x\not=y$, as in that case one has
\[
G(x,y)=\int_0^{+\infty}K(t,x,y)\,{\textrm d}t,\ \ \ \forall x\not=y, x,y\in M.
\]
In fact, we shall use a method that crucially depends on the existence of the Green function, and that has been introduced and exploited e.g. in \cite{BFR, BSV, BV2, BV, BV1, V3}. In it, the qualitative idea is considering an equation for the \it potential \rm of the solution, $U=(-\Delta)^{-1}u$, which \it formally \rm solves $U_t=-u^m$, so that $U$ is decreasing if $u\ge0$. This formal procedure can be made mathematically consistent, see Section \ref{wds}, and will be our main functional analytic tool to reach our goals. Our main results can be summarized as follows:
\begin{itemize}
\item Construction of a suitable \it weighted \rm space, crucially associated to the Green function $G$, see \eqref{e24f}-\eqref{19}, on which weak dual solutions, in the sense precisely given in Section \ref{wds}, exist, see Theorem \ref{exists}. We show in Section \ref{L1G} that such space is always \it strictly larger \rm than $L^1(M)$; in particular, for example, on manifolds with polynomial volume growth at infinity, compatible with the nonparabolicity assumption, it includes e.g. functions of the type $f(x)\asymp d(o,x)^{-a}$ at infinity, where $o$ is a given reference point in $M$ and $d$ is the Riemannian distance, for all $a>2$, hence with a power bound \it independent of the dimension\rm.
    \item A \it smoothing estimate \rm for solutions corresponding to $L^1$ data, see Theorem \ref{decay}, namely a quantitative bound for the $L^\infty$ norm of the solutions at time $t$ in terms of time and of the $L^1$ norm of the datum. The bound is \it Euclidean \rm for small times, and takes carefully into account the volume growth of Riemannian balls for large times. We will show in Section \ref{opt} that the bound is sharp in the sense that there are manifolds satisfying the running assumption in which the function of time appearing in Theorem \ref{decay} cannot be improved, since a matching lower bound holds.
    \item A different \it smoothing estimate \rm for solutions corresponding to $L^1_G(M)$ data, see Theorem \ref{decayL1G}. While we do not know whether the long-time behaviour is sharp, we find anyway relevant that a smoothing effect holds also for the larger class of data considered.

\end{itemize}

The Green function approach in the Euclidean setting has been recently developed in the foundational work \cite{BE} in very great generality, including the treatment of non-local operators in $\mathbb R^n$. However, in \cite{BE} power-like integral conditions on the Green function are assumed; they need not to be fulfilled in our framework (see \eqref{Greenest}, \eqref{14}, \eqref{5} below).

To be more precise, we comment that in \cite{BE}, see in particular their condition ($G_1$) and its generalizations given e.g. in Theorem 3.3 and Example 7.11, it is possible to deal with cases in which the quantity $I(R)=\int_{B_R(x_0)}G(x,x_0)\textrm{d}\mu$, technically crucial both in \cite{BE} in the Euclidean setting and here,  behaves like \it different \rm powers $R^a$ in the ranges $R\to0$ and $R\to+\infty$, as this might be e.g. related to operators which are of the form $-\Delta+(-\Delta)^s$ for $s\in(0,1)$. In our case, see \eqref{14}, \eqref{5} below, the small $R$ behaviour of $I(R)$ is, as expected, \it always Euclidean \rm in the sense that $I(R)\asymp R^2$ for $R<1$, whereas the large $R$ behaviour is essentially \it arbitrary \rm and in particular not polynomial at all, as \eqref{5} clearly shows. In fact, such behaviour is strictly related to the geometry of $M$, this being the main difference with the Euclidean case. As a consequence, the long time smoothing effects may have in principle unpredictable behaviours, see \eqref{7} and e.g. Corollary \ref{almosteuc}(b).

Besides, in \cite{BE} smoothing effects are proved also under their weaker condition ($G_1'$), that does not require any decay condition at $R\to+\infty$  on $I(R)$, just boudndedness as a function of $R$. The resulting bounds are however \it weaker \rm then ours under two respects. On the one hand, their bound for $t\ge1$ under such assumption is
\begin{equation*}
 \|u(t)\|_{L^\infty(M)}\leq\frac{C}{t^{\frac{1}{m}}}\|u_0\|_{L^1(M)}^\frac{1}{m},\ \ \ \forall u_0\in L^1(M)
 \end{equation*}
so that the predicted time decay is \it slower \rm than the one we prove in Theorem \eqref{results} and, e.g., in Corollary \ref{almosteuc}(a). On the other hand, our results hold in $L^1_G(M)$, which we prove to be strictly larger than $L^1(M)$. Remarkably, the time decay we prove in Theorem \ref{decayL1G} is instead \it the same \rm proved in \cite{BE}, but we prove that it holds in the \it larger \rm space $L^1_G(M)$. In a sense, such long time behaviour is the worst possible, see also Remark \ref{worst} below.

The geometric setting considered here makes it natural to consider different type of behaviour of the Green function and its integrals on (Riemannian) balls, so that we consider our work as a natural continuation of \cite{BE}, where the focus is not anymore on considering different operators on a fixed space, namely $\mathbb R^n$, but in dealing with a given equation on spaces whose geometric properties vary, as reflected also in the Green function's behaviour. We also comment that the space $L^1_G$ is \it not considered \rm in \cite{BE}.

We now introduce our results by stating precisely our assumptions in Section \ref{ass}, then defining the concept of weak dual solution in Section \ref{wds}, and finally stating them in Section \ref{results}.

\subsection{Assumptions}\label{ass}
Let $(M,g)$ be a $n$-dimensional Riemannian manifold, we denote by $\operatorname{Ric}$, $d\mu$, $\nabla$ and $\Delta$ the Ricci curvature tensor, the Riemannian measure, the gradient and the Laplace-Beltrami operator on $M$, respectively. We will denote by $B_r(x)$ the geodesic ball centered at $x\in M$ with radius $r>0$ and by $d(x,x_0)$ the geodesic distance between $x,x_0\in M$.

Throughout the paper we will always assume that $M$ has nonnegative Ricci curvature, $\operatorname{Ric}\geq0$, that it is complete, noncompact and that it is \textit{non-collapsing}, i.e.
\begin{equation}\label{noncollapsing}
  \alpha:=\inf_{x\in M}\operatorname{Vol}(B_1(x))>0.
\end{equation}

Note that there exist manifolds with Ric$>0$ which are nonparabolic and for which \eqref{noncollapsing} does \it not \rm hold, see e.g. \cite{CK}.
We will also always make the following \textit{uniformity assumption }on the rate of growth of the Riemannian volume of geodesics balls with large radii, with respect to their centers: there exist $\gamma>0$, $R_0\geq1$ and a $C^1$, nondecreasing function $f:[R_0,+\infty)\rightarrow(0,+\infty)$ such that %for every $x\in M$ %and every $r\geq R_0$
\begin{equation}\label{uniformvolume}
  \frac{Rf(R)}{\Vol(B_R(x))}\leq\gamma\frac{rf(r)}{\Vol(B_r(x))}\qquad\text{ for all }R\geq r\ge R_0, x\in M
\end{equation}
and
\begin{equation}\label{integrablef}
  \beta:=\int_{R_0}^{+\infty}\frac{1}{f(t)}\,{\rm d}t<+\infty.
\end{equation}

\subsection{The space $L^1_G(M)$ and Weak Dual Solutions}\label{wds}

Let $(M,g)$ be a non-parabolic Riemannian manifold and let $G$ be its minimal positive Green function. For each $x_0\in M$, we set
\begin{equation}\label{e24f}
\|f\|_{L^1_{x_0, G}}:=\int_{B_1(x_0)}|f(x)|\, d\mu(x) + \int_{M\setminus B_1(x_0)}|f(x)| G(x, x_0)\, d\mu(x)\,,
\end{equation}
and we introduce the following {\it weighted space}, defined in terms of $x_0$ and of the Green function $G$:
\begin{equation*}\label{Gx}L^1_{x_0, G}(M):=\left\{f:M\to \mathbb R \text{ measurable } : \|f\|_{L^1_{x_0, G}}<\infty \right\}\,.\end{equation*}
Furthermore, we consider the following {\it weighted space}, by removing the dependence on $x_0$:
\begin{equation*}\label{G}L^1_{G}(M):=\left\{f:M\to \mathbb R \text{ measurable } : \sup_{x_0\in M}\|f\|_{L^1_{x_0, G}}<\infty \right\}\,.\end{equation*}
For any $f\in L^1_{G}(M)$ we define
\begin{equation*}\label{19}
  \|f\|_{L^1_{G}}:=\sup_{x_0\in M}\|f\|_{L^1_{x_0, G}}\,.
\end{equation*}
Observe that we shall prove later that, for \it any \rm manifold satisfying our running assumptions, one has \[L^1(M)\subsetneq L^1_G(M)\,.\]
 We shall discuss some properties of the space $L^1_G(M)$ later on, see Proposition \ref{Gprop}.

\smallskip We now provide the definition of \it weak dual solutions\rm, following \cite{BV2, BV} in the Euclidean case, and \cite{BBGM} on manifolds.

\begin{definition}\label{defsol}
Let $u_0\in L^1_G(M), u_0\geq 0$. A nonnegative measurable function $u$ is said to be a {\em Weak Dual Solution} (WDS for short) to problem \eqref{e23f} if, for any $T>0$,
\begin{itemize}
\item for any $x_0\in M, \, u\in C([0, T]; L^1_{x_0, G}(M))$
\item $u^m\in L^1((0, T); L^1_{\textrm{loc}}(M))$;
\item $u$ satisfies the equality
\begin{equation}\label{e25f}
\int_0^T\int_M \partial_t \psi \, (-\Delta)^{-1}u \, d\mu dt -\int_0^T\int_M u^m\, \psi\, d\mu dt =0
\end{equation}
for any test function $\psi\in C^1_c((0, T);L^\infty_{\textrm{loc}}(M))$;
\item $u(\cdot, 0)=u_0$ a.e. in $M$\,.
\end{itemize}
\end{definition}

Note that if $u$ is a WDS to problem \eqref{e23f} in the sense of Definition \ref{defsol}, then $(-\Delta)^{-1}u\in C^0([0, T]; L^1_{\textrm{loc}}(M))$ (see the proof of Proposition \ref{prop53}).

\subsection{Main Results}\label{results}

Concerning the existence of WDS to problem \ref{e23f} we prove what follows, in the $L^1_G(M)$ setting.
 \begin{theorem}[Existence of WDS for nonnegative initial data in $L^1_G(M)$]\label{exists}
  Let $M$ be a Rie\-man\-nian manifold with $\Ric\geq0$ satisfying \eqref{noncollapsing}, \eqref{uniformvolume} and \eqref{integrablef}. For any nonnegative initial datum $u_0\in L^1_G(M)$ there exists a weak dual solution to problem \eqref{e23f}, in the sense of Definition \ref{defsol}.
\end{theorem}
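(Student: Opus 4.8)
I would construct the solution by a monotone approximation inside the weak dual (potential) framework: approximate $u_0$ from below by bounded, integrable data, solve the essentially classical problem for those, and pass to the limit, the passage being controlled by two a priori estimates — the smoothing ($L^\infty$) bound and a bound in the weighted norm $\|\cdot\|_{L^1_{x_0,G}}$. Concretely, fix a reference point $o\in M$ and set $u_{0,k}:=\min(u_0,k)\,\bone_{B_k(o)}$, so that $u_{0,k}\in L^1(M)\cap L^\infty(M)$, $0\le u_{0,k}\le u_0$ with $u_{0,k}\uparrow u_0$ a.e., and $\|u_{0,k}\|_{L^1_{x_0,G}}\le\|u_0\|_{L^1_{x_0,G}}\le\|u_0\|_{L^1_G}$ for all $k$ and all $x_0\in M$. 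For data in $L^1(M)\cap L^\infty(M)$ the problem \eqref{e23f} has, by the classical theory of the porous medium equation on a complete manifold (nonlinear semigroups / time discretization; see \cite{V} and the references in the introduction), a unique nonnegative bounded solution $u_k$, obeying the comparison principle — hence $u_k\le u_{k+1}$ — and which is a WDS in the sense of Definition \ref{defsol} (note that for $f\in L^1(M)$ one has $\|f\|_{L^1_{x_0,G}}\le C\|f\|_{L^1(M)}$ since $G(\cdot,x_0)$ is bounded on $M\setminus B_1(x_0)$, so that $t\mapsto u_k(t)\in C([0,T];L^1_{x_0,G}(M))$, while the weak dual identity \eqref{e25f} follows from the weak formulation by inverting $\Delta$). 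Since $u_{0,k}\in L^1(M)$ one may apply to $u_k$ the smoothing estimate of Theorem \ref{decayL1G} with $\|u_{0,k}\|_{L^1_G}\le\|u_0\|_{L^1_G}$, obtaining for every $0<\tau\le T$ a bound $\sup_{t\in[\tau,T]}\|u_k(t)\|_{L^\infty(M)}\le\Theta(\tau,\|u_0\|_{L^1_G})$ that is \emph{independent of $k$}.

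\textbf{The uniform weighted bound.} Fix $x_0\in M$. Using that the minimal positive Green function $G(\cdot,x_0)$ is superharmonic on $M$, harmonic off $x_0$, tends to $0$ at infinity (Lemma \ref{Greenest}) and is bounded below on $B_1(x_0)$, the truncation $\phi_{x_0}:=\min\{G(\cdot,x_0),C_0\}$ with $C_0:=\max_{\partial B_1(x_0)}G(\cdot,x_0)$ is a bounded nonnegative superharmonic function which equals $G(\cdot,x_0)$ on $M\setminus B_1(x_0)$ and satisfies $c_0\le\phi_{x_0}\le C_0$ on $B_1(x_0)$ for some $0<c_0\le C_0<\infty$ depending only on $x_0$; hence $\|f\|_{L^1_{x_0,G}}$ and $\int_M|f|\,\phi_{x_0}\,d\mu$ are comparable with constants depending only on $x_0$ and $M$. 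Testing \eqref{e25f} for $u_k$ against a compactly supported exhaustion of $\phi_{x_0}$ and using $\Delta\phi_{x_0}\le0$ shows that $t\mapsto\int_M u_k(t)\,\phi_{x_0}\,d\mu$ is non-increasing, whence, combining with the comparison just noted and with Step 1,
\[
\sup_{t\in[0,T]}\|u_k(t)\|_{L^1_{x_0,G}}\ \le\ C(x_0)\,\|u_0\|_{L^1_{x_0,G}}\ <\ \infty \qquad\text{for all }k .
\]

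\textbf{Passage to the limit and verification.} Set $u:=\lim_k u_k=\sup_k u_k$; by Step 1 it is finite a.e. on $M\times(0,\infty)$, locally bounded away from $t=0$, and $u(\cdot,0)=\lim_k u_{0,k}=u_0$ a.e. Writing \eqref{e25f} for $u_k$ and letting $k\to\infty$: on $\supp\psi$ one has $u_k^m\le\Theta(\tau,\|u_0\|_{L^1_G})^m$ and $u_k^m\uparrow u^m$, so the term $\int_0^T\int_M u_k^m\psi$ converges by dominated convergence, while $(-\Delta)^{-1}u_k(t,x)=\int_M u_k(t,y)G(x,y)\,d\mu(y)\uparrow(-\Delta)^{-1}u(t,x)$, the required domination for $\int_0^T\int_M\partial_t\psi\,(-\Delta)^{-1}u_k$ following by Fubini from the boundedness and superharmonicity of $y\mapsto\int_K G(x,y)\,d\mu(x)$ ($K$ a fixed compact containing the $x$-support of $\partial_t\psi$) together with the argument of Step 2 applied at a point of $K$; thus $u$ satisfies \eqref{e25f}. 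Integrating $\partial_t(-\Delta)^{-1}u_k=-u_k^m$ against $\bone_K$ in $x$ and over $(0,T)$ (after a standard regularization) gives $\int_0^T\int_K u_k^m\le\int_K(-\Delta)^{-1}u_{0,k}\le C_K\|u_0\|_{L^1_G}$, hence $\int_0^T\int_K u^m\le C_K\|u_0\|_{L^1_G}<\infty$ by monotone convergence, i.e. $u^m\in L^1((0,T);L^1_{\mathrm{loc}}(M))$. Finally, to obtain $u\in C([0,T];L^1_{x_0,G}(M))$ for each $x_0$ (including at $t=0$), split $\|u(t)-u(s)\|_{L^1_{x_0,G}}$ over $B_R(x_0)$ and its complement: the tail is $\le 2\sup_k\sup_\sigma\int_{M\setminus B_R(x_0)}u_k(\sigma)\,\phi_{x_0}\,d\mu$, which tends to $0$ as $R\to\infty$ uniformly in $s,t$ by the uniform bound of Step 2 and the fact that $u_{0,k}\uparrow u_0\in L^1_{x_0,G}$, while the $B_R(x_0)$-piece tends to $0$ as $s\to t$ by the $L^1_{\mathrm{loc}}$-continuity of $t\mapsto u(t)$ coming from $\partial_t(-\Delta)^{-1}u=-u^m\in L^1_{\mathrm{loc}}$ and Proposition \ref{stimapr}. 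In particular $\|u(t)-u_0\|_{L^1_{x_0,G}}\to0$ as $t\to0^+$, so $u$ is the desired WDS.

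\textbf{Main obstacle.} I expect the crux to be the interplay, in Steps 2 and the last part of Step 3, between the potential formulation and the $x_0$-dependent weighted norm: one must produce a bounded superharmonic weight genuinely comparable to the weight defining $L^1_{x_0,G}$ — so that the monotonicity $\tfrac{d}{dt}\int_M u\,\phi_{x_0}\,d\mu\le0$ becomes an honest a priori bound — and then upgrade that bound, using the $L^1_{\mathrm{loc}}$-regularity forced by the equation and the uniform tail smallness, to continuity in $L^1_{x_0,G}$ up to $t=0$. This is precisely where the Green function estimates of Lemma \ref{Greenest}/Proposition \ref{stimapr}, and through them the geometric hypotheses \eqref{noncollapsing}, \eqref{uniformvolume}, \eqref{integrablef}, enter; the remainder is a by now fairly standard monotone-approximation argument in the weak dual setting.
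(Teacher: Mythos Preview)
Your proposal is essentially the paper's intended approach: the paper omits the proof entirely, stating in the remark following Theorem~\ref{exists} that the WDS is ``constructed as a monotone limit of mild $L^1(M)\cap L^\infty(M)$ solutions'' and that ``the result can be obtained in a similar way as \cite[Theorem 2.4]{BBGM}''. Your Steps~1 and~3 (monotone approximation $u_{0,k}\uparrow u_0$, passage to the limit via the smoothing bound and domination) match this scheme; the smoothing estimate you invoke for the approximants is the one the paper proves directly for $L^1\cap L^\infty$ data inside the proof of Theorem~\ref{decayL1G}, so no circularity arises.

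The one genuine difference is in Step~2. You manufacture a bounded superharmonic weight $\phi_{x_0}=\min\{G(\cdot,x_0),C_0\}$ and argue monotonicity of $\int u_k(t)\,\phi_{x_0}\,d\mu$. The paper instead proves Lemma~\ref{lemma1f}: for the explicit test function $\psi=\Vol(B_{1/2}(x_0))^{-1}\chi_{B_{1/2}(x_0)}\in L^\infty_c(M)$ it shows, via Proposition~\ref{stimapr}, that $(-\Delta)^{-1}\psi$ is two-sided comparable to the weight defining $\|\cdot\|_{L^1_{x_0,G}}$; the non-increase of $t\mapsto\int u_k(t)\,(-\Delta)^{-1}\eta\,d\mu$ for nonnegative $\eta\in L^\infty_c$ then comes directly out of the weak dual identity~\eqref{e25f} (this is~\eqref{e28f} in Proposition~\ref{prop53}), yielding Lemma~\ref{lemma2f} and in particular the uniform bound~\eqref{e40f} with a constant depending only on $c_1,c_2,c_3,\alpha,\beta,\gamma,n,R_0$. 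Your route gives the same conclusion, but to make it rigorous within Definition~\ref{defsol} you must approximate the \emph{measure} $(-\Delta)\phi_{x_0}$ (supported on a level set of $G$) by admissible $L^\infty_c$ test functions and justify the limit; the paper's choice sidesteps this by working from the outset with a test function that is already in $L^\infty_c$. Either way the geometry enters through the Green-function estimates of Lemma~\ref{Greenest} and Proposition~\ref{stimapr}, exactly as you diagnose in your ``main obstacle'' paragraph.
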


\begin{remark}
  The WDS $u$ in Theorem \ref{exists} is constructed as a monotone limit of mild $L^1(M)\cap L^\infty(M)$ solutions, and it is easy to show that it does not depend on the particular choice of the monotone sequence $u_{0,k}\in L^1(M)\cap L^\infty(M)$ approximating the initial datum $u_0\in L^1_G(M)$. The result can be obtained in a similar way as \cite[Theorem 2.4]{BBGM}, therefore we will omit the proof.
\end{remark}

Depending on whether the initial datum $u_0$ belongs to $L^1(M)$ or $L^1_G(M)$, we obtain two different smoothing estimates. They are described in the following two results.

\begin{theorem}[Smoothing for $L^1$ data]\label{decay}
 Let $M$ be a Riemannian manifold with $\Ric\geq0$ satisfying \eqref{noncollapsing}, \eqref{uniformvolume} and \eqref{integrablef}. Let
 \begin{eqnarray*}
    h(R) &=& Rf(R)\int_R^{+\infty}\frac{1}{f(t)}\,{\rm d}t+R^2 \\
    F(R) &=& \inf_{x\in M} \Vol(B_R(x))
 \end{eqnarray*}
 and $\theta(R)=F(R)(h(R))^\frac{1}{m-1}$, %with $m>1$ and
 $f$ as in \eqref{uniformvolume}. There exists $C>0$ such that for any initial datum $u_0\in L^1(M)$, if $u$ is a WDS of
  \eqref{e23f}, then
 \begin{equation}\label{7}
 \|u(t)\|_{L^\infty(M)}\leq\frac{C}{t^{\frac{1}{m-1}}}\left(h\left(\theta^{-1}\left(t^{\frac{1}{m-1}}\|u_0\|_{L^1(M)}\right)\right)\right)^\frac{1}{m-1}
 \end{equation}
  for every %$t\geq\left(\frac{\theta(R_0)}{\|u_0\|_{L^1(M)}}\right)^{m-1}$.
  $t\ge K\|u_0\|_{L^1(M)}^{-(m-1)}$, with $K=\theta(R_0)^{m-1}$ and $R_0$ as in \eqref{uniformvolume}.
  Moreover there exists $C>0$ such that
 \begin{equation}\label{13}
 \|u(t)\|_{L^\infty(M)}\leq\frac{C}{t^{\frac{n}{(m-1)n+2}}}\|u_0\|_{L^1(M)}^\frac{2}{n(m-1)+2}
 \end{equation}
  for every $0<t\leq K\|u_0\|_{L^1(M)}^{-(m-1)}$. %\color{red} aggiustare brevemente la dim. In alternativa, forse mettere semplicemente $K=1$ come nel teorema seguente\normalcolor.
\end{theorem}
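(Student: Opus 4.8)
The plan is to run the Green-function/potential method, reducing everything to an a priori bound. Since the right-hand sides of \eqref{7} and \eqref{13} are nondecreasing in $\|u_0\|_{L^1(M)}$ (recall $h$ is nondecreasing and $\theta$ increasing, so $\theta^{-1}$ and $R\mapsto h(\theta^{-1}(R))$ are nondecreasing), and since, by the remark after Theorem~\ref{exists}, a WDS is the monotone limit of mild $L^1(M)\cap L^\infty(M)$ solutions with data $u_{0,k}\uparrow u_0$, it suffices to prove both estimates for a nonnegative bounded mild solution $u$ with datum $u_0\in L^1(M)\cap L^\infty(M)$: for such $u$ the comparison principle holds and $\|u(t)\|_{L^1(M)}\le\|u_0\|_{L^1(M)}$; the bounds then pass to the monotone limit because the threshold $K=\theta(R_0)^{m-1}$ does not depend on the datum. (Note also that $\Ric\ge0$ together with nonparabolicity forces $n\ge3$.) The only qualitative input on $u$ I need is the B\'enilan--Crandall monotonicity: $t\mapsto t^{1/(m-1)}u(x,t)$ is nondecreasing for each $x$. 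I would derive it, using nothing about curvature, from the amplitude--time scaling $u_\lambda(x,t):=\lambda u(x,\lambda^{m-1}t)$, which again solves the PDE in \eqref{e23f} with datum $\lambda u_0$: for $\lambda\ge1$ one has $\lambda u_0\ge u_0$, so comparison gives $u_\lambda\ge u$, i.e.\ $u(x,\tau)\ge(t/\tau)^{1/(m-1)}u(x,t)$ for $\tau\ge t$, in particular $u(x,\tau)\ge2^{-1/(m-1)}u(x,t)$ on $[t,2t]$.

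Next I would pass to the potential $U(x,t):=(-\Delta)^{-1}u(x,t)=\int_MG(x,y)u(y,t)\,d\mu(y)$, finite because $G$ is locally integrable ($n\ge3$) and $\sup_{d(x,y)\ge R_0}G(x,y)<+\infty$. By Definition~\ref{defsol} one has $\partial_tU=-u^m$, so $U(\cdot,t)$ is nonnegative, nonincreasing in $t$, and $U(x,t_1)-U(x,t_2)=\int_{t_1}^{t_2}u^m(x,\tau)\,d\tau$ for a.e.\ $x$. Combining with the previous paragraph,
\[
u(x,t)^m\ \le\ \frac{2^{m/(m-1)}}{t}\int_t^{2t}u^m(x,\tau)\,d\tau\ =\ \frac{2^{m/(m-1)}}{t}\big(U(x,t)-U(x,2t)\big)\ \le\ \frac{2^{m/(m-1)}}{t}\,U(x,t).
\]
I would then bound $U$ by splitting its defining integral over $B_r(x)$ and $M\setminus B_r(x)$, estimating $u\le\|u(t)\|_{L^\infty(M)}$ on the first piece and $G(x,y)\le\sup_{d(x,y)\ge r}G(x,y)$ on the second, and invoking the Green function estimates of Lemma~\ref{Greenest} and Proposition~\ref{stimapr}: via \eqref{uniformvolume}--\eqref{integrablef} these yield, uniformly in $x$ and for $r\ge R_0$, $\int_{B_r(x)}G(x,y)\,d\mu(y)\le C_1h(r)$ and $\sup_{d(x,y)\ge r}G(x,y)\le C_1h(r)/\Vol(B_r(x))\le C_1h(r)/F(r)$. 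Writing $N:=\|u(t)\|_{L^\infty(M)}$ and using $\|u(t)\|_{L^1(M)}\le\|u_0\|_{L^1(M)}$, this gives
\[
t\,N^m\ \le\ C\Big(h(r)\,N+\frac{h(r)}{F(r)}\,\|u_0\|_{L^1(M)}\Big)\qquad\text{for every }r\ge R_0.
\]

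The large-time bound then follows from the choice $r=R_\ast:=\theta^{-1}\!\big(t^{1/(m-1)}\|u_0\|_{L^1(M)}\big)$ — the radius balancing the two terms — which satisfies $R_\ast\ge R_0$ exactly when $t\ge\theta(R_0)^{m-1}\|u_0\|_{L^1(M)}^{-(m-1)}=K\|u_0\|_{L^1(M)}^{-(m-1)}$, the asserted range. From $\theta(R_\ast)=F(R_\ast)h(R_\ast)^{1/(m-1)}=t^{1/(m-1)}\|u_0\|_{L^1(M)}$ one gets $h(R_\ast)/t=\big(\|u_0\|_{L^1(M)}/F(R_\ast)\big)^{m-1}$; substituting and abbreviating $A:=\|u_0\|_{L^1(M)}/F(R_\ast)$ reduces the last display to $N^m\le C(A^{m-1}N+A^m)$, whence (distinguish $N\le A$, or $N>A$ so that $A^m\le A^{m-1}N$ and $N^{m-1}\le2CA^{m-1}$) $N\le C'A=C'\big(h(R_\ast)/t\big)^{1/(m-1)}$, which is \eqref{7}. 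For $t\le K\|u_0\|_{L^1(M)}^{-(m-1)}$ I would run the identical scheme at bounded scales $r\le R_0$, where \eqref{uniformvolume} is replaced by Bishop--Gromov together with \eqref{noncollapsing} (yielding $\Vol(B_r(x))\asymp r^n$ and hence the Euclidean-type bounds $\int_{B_r(x)}G(x,y)\,d\mu(y)\lesssim r^2$, $\sup_{d(x,y)\ge r}G(x,y)\lesssim r^{2-n}$, uniformly in $x$); then $t\,N^m\lesssim r^2N+r^{2-n}\|u_0\|_{L^1(M)}$, and the balancing choice $r\asymp(\|u_0\|_{L^1(M)}/N)^{1/n}$ gives $N\lesssim\|u_0\|_{L^1(M)}^{2/(n(m-1)+2)}t^{-n/(n(m-1)+2)}$, i.e.\ \eqref{13}.

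I expect the main obstacle to be the uniform-in-$x$ control of the two pieces of the potential: turning the Li--Yau-type pointwise behaviour of $G$ into $\int_{B_r(x)}G(x,y)\,d\mu(y)\lesssim h(r)$ and $\sup_{d(x,y)\ge r}G(x,y)\lesssim h(r)/\Vol(B_r(x))$ with constants independent of the base point is exactly where the uniformity hypotheses \eqref{uniformvolume}, \eqref{integrablef} are indispensable — they are what convert the $x$-dependent volume profile into the single functions $h$ and $F$ — and it is the content that must be prepared in Lemma~\ref{Greenest}/Proposition~\ref{stimapr}. Subordinate technical points are: justifying $\partial_tU=-u^m$ and the interchange of integrals at the level of mild/weak dual solutions (using the continuity of $(-\Delta)^{-1}u$ recorded after Definition~\ref{defsol}), and checking that the two regimes glue along the threshold $t=K\|u_0\|_{L^1(M)}^{-(m-1)}$, which is merely a comparison of constants.
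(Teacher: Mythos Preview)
Your proposal is correct and follows essentially the same route as the paper: reduce to $u_0\in L^1\cap L^\infty$, use the B\'enilan--Crandall monotonicity to get $u^m(x_0,t)\le\frac{C}{t}\int_M u(x,t)G(x,x_0)\,d\mu$ (the paper packages this as inequality \eqref{e27f}), split the potential at radius $R$, invoke Lemma~\ref{Greenest} for the two pieces, and optimize in $R$. The only cosmetic differences are that the paper applies Young's inequality to the inner piece (obtaining $\tfrac{\varepsilon}{m}N^m+C_\varepsilon t^{-m/(m-1)}h(R)^{m/(m-1)}$ and then absorbing $\varepsilon N^m$) whereas you keep it as $h(r)N$ and close with the elementary dichotomy $N\le A$ versus $N>A$; and for the short-time outer piece the paper routes through \eqref{e42f} and $\|u(t)\|_{L^1_{x_0,G}}\lesssim\|u(t)\|_{L^1}$, while your direct bound $\sup_{d\ge r}G\lesssim r^{2-n}$ (from Lemma~\ref{Greenest}-iii) together with the uniform boundedness of $G$ for $d\ge R_0$) gives the same $r^{2-n}$ factor without that detour. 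One small point to make explicit: your short-time balancing $r\asymp(\|u_0\|_{L^1}/N)^{1/n}$ is self-referential, so you should note that if $N$ already satisfies \eqref{13} there is nothing to prove, while otherwise $N$ is large enough that this $r$ lies in $(0,1]$; the paper sidesteps this by choosing $R$ explicitly in terms of $t$ and $\|u_0\|_{L^1}$ only. Proposition~\ref{stimapr} is not actually needed for your argument---Lemma~\ref{Greenest} alone supplies both Green-function estimates you use.
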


\begin{remark}\label{lt}
When $M=\mathbb R^n$, it is well known that \eqref{13} holds for all $t>0$ and that such bound is sharp \it both for short and for long times\rm, see e.g. \cite{V, V4}, since such bound is attained as an equality by the so-called Barenblatt solution, which is the (unique) solution of prescribed $L^1(\mathbb R^n)$ norm $A$, for all $t>0$, corresponding to the initial datum $A\delta_0$, $\delta_0$ being the Dirac delta centered at the origin.

\end{remark}

To give an idea of the kind of explicit results we get from our results above, we state the following Corollary.

\begin{corollary}\label{almosteuc}

%comment that as shown in Example \ref{explicit},

 Let $M$ be a Riemannian manifold with $\Ric\geq0$ satisfying \eqref{noncollapsing}, \eqref{uniformvolume} and \eqref{integrablef}.
  \begin{itemize}
\item[(a)] If $f(R)=R^{k-1}(\log R)^\delta$ for some $k\in(2,n]$, $\delta\in \mathbb{R}$ and  \begin{equation*}\Vol(B_R(x))\geq CR^\lambda\end{equation*} for some $\lambda\in(2,n]$ and every large enough $R>0$, $x\in M$, then
    \begin{equation*}\|u(t)\|_{L^\infty}(M)\leq \frac{C}{t^\frac{\lambda}{(m-1)\lambda+2}}\|u_0\|_{L^1(M)}^\frac{2}{(m-1)\lambda+2},\end{equation*}
  for all large enough $t>0$. \\

  \item[(b)] If $f(R)=R(\log R)^\delta$ for some $\delta>1$ and \begin{equation*}\Vol(B_R(x))\geq CR^\lambda(\log R)^\sigma\end{equation*} for some $\lambda\in[2,n]$, $\sigma\in \mathbb{R}$, and every large enough $R>0$, $x\in M$, then
  \begin{equation*}\|u(t)\|_{L^\infty(M)}\leq \frac{C}{t^\frac{1}{m-1}}\left(G\big(t^\frac{1}{m-1}\|u_0\|_{L^1(M)}\big)\right)^\frac{2}{m-1}\left(\log\left(G\big(t^\frac{1}{m-1}\|u_0\|_{L^1(M)}\big)\right)
  \right)^\frac{1}{m-1},\end{equation*}
  for all large enough $t>0$, where

$$G(s)=\begin{cases}e^{\frac{b}{a}W_0\left(\frac{a}{b}s^\frac{1}{b}\right)}&\textrm{if } b\neq0\\ s^\frac{m-1}{(m-1)\lambda+2}&\textrm{if }b=0\end{cases}$$
 with
 $$a=\lambda+\frac{2}{m-1},\qquad b=\sigma+\frac{1}{m-1}$$
 and where $W_0$ is %the principal branch of the Lambert function, or product logarithm function, i.e.
 the inverse on $[-1,\infty)$ of the function $w(x)=xe^x$. \end{itemize}\end{corollary}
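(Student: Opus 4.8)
The plan is to specialize Theorem \ref{decay}, in particular the long-time bound \eqref{7}, to the two explicit families of growth functions $f$ and volume lower bounds. The starting point in both cases is to compute, up to constants, the auxiliary quantity $h(R)=Rf(R)\int_R^{+\infty}\frac{1}{f(t)}\,\mathrm dt+R^2$ and then the function $\theta(R)=F(R)(h(R))^{1/(m-1)}$, so that $\theta^{-1}$ can be made explicit (or at least controlled) and substituted into \eqref{7}.

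For part (a), with $f(R)=R^{k-1}(\log R)^\delta$ and $k>2$, the tail integral $\int_R^{+\infty}t^{-(k-1)}(\log t)^{-\delta}\,\mathrm dt$ is, by a standard integration-by-parts / comparison estimate, asymptotically comparable to $R^{-(k-2)}(\log R)^{-\delta}$ (the logarithmic factor is essentially stationary on the relevant scale; one handles the correction by monotonicity of $f$ and the defining inequality \eqref{uniformvolume}). Hence $Rf(R)\int_R^{+\infty}\frac{1}{f}\asymp R^2$, so that $h(R)\asymp R^2$ for large $R$. Using the hypothesis $\Vol(B_R(x))\ge CR^\lambda$ one gets $F(R)=\inf_x\Vol(B_R(x))\gtrsim R^\lambda$, hence $\theta(R)\gtrsim R^\lambda\cdot R^{2/(m-1)}=R^{\lambda+2/(m-1)}$; plugging the corresponding bound on $\theta^{-1}$ and on $h(\theta^{-1}(\cdot))\asymp(\theta^{-1}(\cdot))^2$ into \eqref{7} and simplifying the exponents yields
\[
\|u(t)\|_{L^\infty(M)}\le\frac{C}{t^{1/(m-1)}}\Big(t^{1/(m-1)}\|u_0\|_{L^1(M)}\Big)^{\frac{2}{(m-1)\lambda+2}},
\]
which is exactly the claimed Euclidean-type bound with the dimension $n$ replaced by $\lambda$. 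One must check that the regime $t\ge K\|u_0\|_{L^1}^{-(m-1)}$ is precisely the "large $t$" range being asserted.

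For part (b), with $f(R)=R(\log R)^\delta$ and $\delta>1$, the integrability condition \eqref{integrablef} holds (this is where $\delta>1$ is used), and now $\int_R^{+\infty}\frac{1}{t(\log t)^\delta}\,\mathrm dt\asymp(\log R)^{-(\delta-1)}$, so $Rf(R)\int_R^{+\infty}\frac1f\asymp R^2(\log R)^\delta(\log R)^{-(\delta-1)}=R^2\log R$, giving $h(R)\asymp R^2\log R$. Combined with $\Vol(B_R(x))\ge CR^\lambda(\log R)^\sigma$, hence $F(R)\gtrsim R^\lambda(\log R)^\sigma$, we get
\[
\theta(R)\asymp R^\lambda(\log R)^\sigma\big(R^2\log R\big)^{1/(m-1)}=R^{\lambda+\frac{2}{m-1}}(\log R)^{\sigma+\frac{1}{m-1}}=R^a(\log R)^b,
\]
with $a,b$ as in the statement. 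The remaining step is to invert $s=R^a(\log R)^b$ for $R$ in terms of $s$: writing $R=s^{1/a}(\log R)^{-b/a}$ and iterating, or equivalently setting $R=e^{y}$ so that $s=e^{ay}y^b$ and solving via the Lambert $W$ function, one obtains exactly the expression $G$ in the statement — the case $b=0$ being the clean power law, and the case $b\neq0$ producing $R\asymp G(s)=e^{(b/a)W_0((a/b)s^{1/b})}$. Finally one substitutes $R=\theta^{-1}(t^{1/(m-1)}\|u_0\|_{L^1})\asymp G(t^{1/(m-1)}\|u_0\|_{L^1})$ into $h(\theta^{-1}(\cdot))^{1/(m-1)}\asymp(R^2\log R)^{1/(m-1)}$ in \eqref{7} to get the stated bound (the exponent $\tfrac{2}{m-1}$ on $G$ and $\tfrac{1}{m-1}$ on $\log G$ matching $(R^2\log R)^{1/(m-1)}$).

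The main obstacle I anticipate is bookkeeping rather than conceptual: carefully justifying the asymptotic equivalences for the tail integrals uniformly enough that the constants in Theorem \ref{decay} survive, checking that the assumptions \eqref{noncollapsing}, \eqref{uniformvolume}, \eqref{integrablef} are genuinely consistent with the stated $f$ and the volume lower bounds (in particular that such $f$ can serve in \eqref{uniformvolume}, which constrains $k\le n$, $\lambda\le n$), and — most delicately — correctly deriving and simplifying the Lambert-$W$ form of $\theta^{-1}$ in part (b), including verifying that $(a/b)s^{1/b}\ge -1$ so that the branch $W_0$ is the right one and that the asymptotics of $W_0$ make $G$ monotone increasing for large $s$. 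Once these are in place, substitution into \eqref{7} and exponent arithmetic finish the proof.
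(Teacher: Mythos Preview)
Your proposal is correct and follows essentially the same route as the paper's own proof: compute the asymptotics of the tail integral to get $h(R)\asymp R^2$ in case (a) and $h(R)\asymp R^2\log R$ in case (b), combine with the assumed lower bound on $F(R)$ to bound $\theta(R)$ from below by an explicit power or power-times-log, invert, and substitute into \eqref{7}. The paper streamlines the last step by invoking Remark~\ref{rem2} directly---that is, it \emph{redefines} $\theta$ to be the explicit lower bound $R^{\lambda+2/(m-1)}$ (resp.\ $R^a(\log R)^b$) and applies \eqref{7} with that choice, rather than tracking an inequality on $\theta^{-1}$---but this is exactly the mechanism you describe.
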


\medskip
In view of Remark \ref{lt}, the result of point (a) above can be described by saying that the power appearing in the assumed lower bound for the volume growth acts as an \it effective dimension for large times \rm for the evolution we consider, independently of $k$ and $\delta$. In particular if $\lambda=n$, and thus $f(R)=R^{n-1}$, we recover the Euclidean rate of decay. In turn, the result of point (b) shows that the smoothing effect can be considerably more complicated when the geometry of $M$ is less close to the Euclidean one. It this second case the relation between volume growth and the decay estimate is only implicit and in no case, including the special one $b=0$, it resembles the Euclidean situation. In particular, no effective dimension can be identified.

\begin{remark}\label{rem2}
  Since $F,h$ are strictly increasing and diverging to $+\infty$ as $R$ tends to $+\infty$, see Remark \ref{15}, $\theta^{-1}$ is well defined on $[\theta(R_0), \infty)$ and it diverges to $+\infty$ as $R$ tends to $+\infty$.

 Note also that, as it is clear from the proof, estimate \eqref{7} holds for any function $\theta$ which is strictly increasing, diverging to $+\infty$ when $R$ tends to $+\infty$ and such that for some constant $c>0$ one has $\theta(R)\leq c F(R)(h(R))^\frac{1}{m-1}$.
\end{remark}

\begin{theorem}[Smoothing for $L^1_G(M)$ data]\label{decayL1G}
 Let $M$ be a Riemannian manifold with $\Ric\geq0$ satisfying \eqref{noncollapsing}, \eqref{uniformvolume} and \eqref{integrablef}. There exists $C>0$ such that for any initial datum $u_0\in L^1_G(M)$, if $u$ is a WDS of problem \eqref{e23f} with $m>1$,
  \begin{equation}\label{20}
 \|u(t)\|_{L^\infty(M)}\leq\frac{C}{t^{\frac{1}{m}}}\|u_0\|_{L^1_G(M)}^\frac{1}{m}
 \end{equation}
  for every $t\geq\|u_0\|_{L^1_G(M)}^{-(m-1)}$. Moreover there exists $C>0$ such that
  \begin{equation}\label{21}
 \|u(t)\|_{L^\infty(M)}\leq\frac{C}{t^{\frac{n}{(m-1)n+2}}}\|u_0\|_{L^1_G(M)}^\frac{2}{n(m-1)+2}
 \end{equation}
  for every $0<t\leq\|u_0\|_{L^1_G(M)}^{-(m-1)}$.
\end{theorem}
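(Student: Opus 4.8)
The plan is to use the scaling invariance of \eqref{e23f} to reduce to the normalization $\|u_0\|_{L^1_G(M)}=1$: if $v(x,t):=\lambda u(x,\lambda^{m-1}t)$, then $v$ is again a weak dual solution of \eqref{e23f}, $\|v(\cdot,0)\|_{L^1_G}=\lambda\|u_0\|_{L^1_G}$, and both \eqref{20} and \eqref{21} are invariant under this rescaling, so that the two threshold times both become $t=1$. Under this normalization I would first establish the short-time bound \eqref{21}, i.e.\ $\|u(t)\|_{L^\infty(M)}\le C\,t^{-n/(n(m-1)+2)}$ for $0<t\le1$. This is the purely local smoothing effect for the PME: since $\Ric\ge0$ together with \eqref{noncollapsing} yields a local Nash (equivalently, Euclidean-type Faber--Krahn) inequality on geodesic balls of radius one, uniformly in the centre, the standard local $L^1$--$L^\infty$ smoothing gives
\[
\|u(t)\|_{L^\infty(B_1(x))}\le C\,t^{-\frac{n}{n(m-1)+2}}\,\|u_0\|_{L^1(B_2(x))}^{\frac{2}{n(m-1)+2}}+C\,t^{\frac{1}{m-1}};
\]
for $0<t\le 1$ the tail term is $\le C\le C\,t^{-n/(n(m-1)+2)}$, while $\|u_0\|_{L^1(B_2(x))}\le C\|u_0\|_{L^1_G}=C$ uniformly in $x$ by definition of the $L^1_G$-norm and a covering argument relying on \eqref{noncollapsing} and volume doubling. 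Taking the supremum over $x\in M$ gives \eqref{21}; in particular $\|u(1)\|_{L^\infty(M)}\le C$.

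The core of the argument is the potential estimate. Set $U(x,t):=\int_M G(x,y)\,u(y,t)\,{\rm d}\mu(y)\ge0$, which is finite for every $t>0$ since $u(\cdot,t)\in L^\infty(M)\cap L^1_G(M)$ and $\sup_{x}\int_{B_1(x)}G(x,y)\,{\rm d}\mu(y)<+\infty$ (Lemma~\ref{Greenest}). The weak dual formulation \eqref{e25f} --- justified, as is standard for WDS, along the monotone approximation by mild $L^1(M)\cap L^\infty(M)$ solutions --- says that $\partial_t U=-u^m$ in the weak sense, so $U(x,\cdot)$ is nonincreasing for a.e.\ $x$ and, for $0<t_1<t_2$,
\[
U(x,t_1)-U(x,t_2)=\int_{t_1}^{t_2}u(x,s)^m\,{\rm d}s .
\]
Evaluating at $t_1=1$, I would bound $\sup_{x}U(x,1)$ by splitting over $B_1(x)$ and its complement: on the complement, $\int_{M\setminus B_1(x)}G(x,y)u(y,1)\,{\rm d}\mu(y)\le\|u(1)\|_{L^1_{x,G}}\le\|u(1)\|_{L^1_G}\le\|u_0\|_{L^1_G}=1$ (the $L^1_G$-norm being nonincreasing along WDS, cf.\ Proposition~\ref{stimapr} and Proposition~\ref{prop53}), and on the ball, $\int_{B_1(x)}G(x,y)u(y,1)\,{\rm d}\mu(y)\le\|u(1)\|_{L^\infty(M)}\sup_{x}\int_{B_1(x)}G(x,y)\,{\rm d}\mu(y)\le C$. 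Hence $\sup_{x}U(x,1)\le C$.

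To close, I would invoke the Bénilan--Crandall estimate $\partial_t u\ge-\frac{u}{(m-1)t}$ --- classical for mild solutions, hence valid for the WDS after passing to the monotone limit --- which is equivalent to the monotonicity of $t\mapsto t^{1/(m-1)}u(x,t)$ and thus gives $u(x,s)\ge(t/s)^{1/(m-1)}u(x,t)$ for $s\ge t>0$. Inserting this into the identity above with $t_1=t\ge1$, $t_2=2t$, and using $U(x,2t)\ge0$ as well as the fact that $U(x,\cdot)$ is nonincreasing, yields
\[
c_m\,t\,u(x,t)^m\le U(x,t)\le U(x,1),\qquad c_m:=\int_1^2\sigma^{-\frac{m}{m-1}}\,{\rm d}\sigma>0 .
\]
Combining with $\sup_{x}U(x,1)\le C$ gives $\|u(t)\|_{L^\infty(M)}^m\le C/t$ for all $t\ge1$, which is \eqref{20} for normalized data; undoing the scaling produces \eqref{20} and \eqref{21} in general.

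I expect the principal difficulty to be the rigorous justification, for data merely in $L^1_G(M)$, of the potential identity $\partial_t U=-u^m$ and of the Bénilan--Crandall inequality at the level of weak dual solutions: since $U(x,0^+)$ may well be $+\infty$, one cannot work with $U$ at $t=0$ but must carry out every step along the approximating mild solutions $u_{0,k}\uparrow u_0$, $u_k\uparrow u$ (where these facts are classical) and pass to the limit by monotone convergence, checking in particular that the potential and its time derivative are stable under this limit. A secondary but essential point is that the two base-point-uniform ingredients --- $\sup_{x}\int_{B_1(x)}G(x,\cdot)\,{\rm d}\mu<\infty$ and the nonincrease of the $L^1_G$-norm --- genuinely rely on the Green function estimates built from \eqref{noncollapsing}, \eqref{uniformvolume} and \eqref{integrablef}, and the short-time bound \eqref{21} requires the uniform local Nash inequality coming from $\Ric\ge0$ and \eqref{noncollapsing}.
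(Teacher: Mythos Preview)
Your long-time argument is essentially the paper's: the key inequality $c_m\,t\,u(x,t)^m\le U(x,t)$ is exactly \eqref{e27f} with $t_1=2t_0$, and the splitting of $U$ into $B_R(x_0)$ and its complement, with the complement controlled by the $L^1_{x_0,G}$-norm (Lemma~\ref{lemma2f} gives $\|u(t)\|_{L^1_{x_0,G}}\le C\|u_0\|_{L^1_{x_0,G}}$, not literal nonincrease, but that suffices), is the same decomposition. The one difference in closure is that you bootstrap from the short-time bound: you use $\|u(1)\|_{L^\infty}\le C$ to estimate the ball part of $U(x,1)$, and then monotonicity of $U$ in $t$. The paper instead stays at time $t_0$ and disposes of the ball term by Young's inequality, writing $\tfrac{1}{t_0}\int_{B_{R_0}(x_0)}uG\le\tfrac{\varepsilon}{m}\|u(t_0)\|_{L^\infty}^m+C_\varepsilon t_0^{-m/(m-1)}h(R_0)^{m/(m-1)}$ and absorbing the $\varepsilon$-term; this makes the long-time estimate independent of the short-time one. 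Both routes are correct.

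Your short-time argument is genuinely different from the paper's and is the place where your proposal is thinnest. The paper does \emph{not} invoke any local Nash or Faber--Krahn inequality: it runs the same Green-function machinery with $0<R\le1$, using the local potential bound \eqref{14} for $I$ and the scale-dependent estimate \eqref{e42f}, $R^{n-2}\int_{M\setminus B_R(x_0)}uG\le C\|u(t)\|_{L^1_{x_0,G}}$, for $J$, then optimizes over $R$. Your alternative via a uniform local Nash inequality and a local PME smoothing estimate of the form $\|u(t)\|_{L^\infty(B_1(x))}\le C t^{-n/(n(m-1)+2)}\|u_0\|_{L^1(B_2(x))}^{2/(n(m-1)+2)}+C t^{1/(m-1)}$ is plausible in principle, but that estimate is not something you can simply cite: local $L^1$--$L^\infty$ smoothing for the PME with an additive tail of this exact shape requires a nontrivial argument (local Moser/De Giorgi iteration or a localized Aronson--B\'enilan/Herrero--Pierre bound) that has to be carried out on manifolds with only $\Ric\ge0$ and non-collapsing as input. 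If you want to keep your route, you must supply that local smoothing lemma; otherwise the paper's purely potential-theoretic proof of \eqref{21} is both shorter and internal to the tools already developed (Lemma~\ref{Greenest}, Lemma~\ref{lemma2f}).
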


\subsection{Organization of the paper.} In Section \ref{prel}, we discuss our geometric assumptions and present a number of examples of manifolds which satisfy them. Besides, we analyze some properties of the space $L^1_G(M)$. In Section \ref{apriori} we show some estimates for the Green function, for the potential of test functions and for WDS to \eqref{e23f} with $u_0\in L^1(M)\cap L^\infty(M).$

In Section \ref{approx} we show the existence of such type of solutions, while in Section \ref{basic} we obtain some basic estimates on them. The main results are proved in Section \ref{proofmain}. Finally, in Section \ref{optimal} we apply our general results to some classes of manifolds and we show optimality of our decay estimate for large times.

\section{Geometric setting and examples}\label{prel}
We recall that a Riemannian manifold is called non-parabolic if the Laplace-Beltrami operator $\Delta$
 on M has a positive fundamental solution, and parabolic otherwise. If $M$ is non-parabolic, we denote by $G(x,x_0)$, $x,x_0\in M$ with $x\neq x_0$, its minimal positive Green function.

We start with some results concerning the volume of Riemannian balls, that will be crucial in the proof of Proposition \ref{stimapr}

\subsection{On volume bounds}\label{rem1}
  It is well known, by combining the results of \cite{LY,LT}, that if $\Ric\geq0$ then $M$ is non-parabolic if and only if
 $$
 \int_{r}^{+\infty}\frac{t}{\Vol(B_t(x))}\,{\rm d}t<+\infty
 $$
 for some, and hence for all, $x\in M$ and $r>0$. In this case there exist $c_1,c_2>0$ such that
 \begin{equation}\label{Green}
   c_1\int_{d(x,x_0)}^{+\infty}\frac{t}{\Vol(B_t(x_0))}\,{\rm d}t\leq G(x,x_0)\leq c_2\int_{d(x,x_0)}^{+\infty}\frac{t}{\Vol(B_t(x_0))}\,{\rm d}t.
 \end{equation}

We note that if \eqref{uniformvolume} and \eqref{integrablef} hold, then $M$ is non-parabolic. Indeed for every $x_0\in M$ we have
\begin{equation*}
  \int_{R_0}^{+\infty}\frac{t}{\Vol{(B_t(x_0))}}\,{\rm d}t =\int_{R_0}^{+\infty} \frac{tf(t)}{\Vol(B_t(x_0))} \frac{1}{f(t)}\,{\rm d}t \leq \beta\gamma\frac{R_0f(R_0)}{\Vol(B_{R_0}(x_0))}<+\infty.
\end{equation*}
%\end{remark}

  We also comment that by Bishop-Gromov volume comparison theorem (see e.g. \cite{P}), it is well known that if $\Ric\geq0$ then for every $x_0\in M$
  the function $\frac{\Vol(B_r(x_0))}{\omega_nr^n}$ is non-increasing in $r$, where $\omega_n$ is the volume of the Euclidean ball of radius $1$. In particular, for every $x_0\in M$ and every $r>0$
  \begin{equation}\label{volume}
  \Vol(B_r(x_0))\leq\omega_nr^n.
  \end{equation}

  Since by a celebrated result by Li-Yau (see e.g. \cite{LY}) if
  $\Vol(B_r(x_0))\leq Cr^2$ for some constant $C>0$ and every $r>0$ then $M$ is parabolic, if $\Ric\geq0$ and \eqref{uniformvolume} and \eqref{integrablef} hold, then $n=\operatorname{dim} M\geq3$.

  We also note that if $\Ric\geq0$ and condition \eqref{noncollapsing} holds, then by Bishop-Gromov volume comparison theorem we also have that for every $R\geq1$, every $0<r\leq R$ and every $x_0\in M$
  \begin{equation}\label{volume2}
  \Vol(B_r(x_0))\geq \Vol(B_R(x_0))\frac{r^n}{R^n}\geq \Vol(B_1(x_0))\frac{r^n}{R^n}\geq\frac{\alpha}{R^n}r^n.
  \end{equation}

  In particular, $\Vol(B_R(x_0))\geq\alpha$.
%\end{remark}

\begin{remark}\label{doubling}
  We recall that if $\Ric\geq0$ then geodesic balls are doubling with respect to the Riemannian measure on $M$, i.e. there exists $c_3>0$ such that for every $r>0$ and every $x_0\in M$ one has
  \begin{equation}\label{doub}
  \Vol(B_{2r}(x_0))\leq c_3\Vol(B_r(x_0)),
  \end{equation}
  see e.g. \cite{Gri3}.
\end{remark}

\begin{remark}
  Note that if conditions \eqref{uniformvolume} and \eqref{integrablef} hold on $M$ with $f(t)=t^{k-1}$ for some $k>1$ and if $\Ric\geq0$, then it must be $k\in(2,n]$. Indeed for such function by \eqref{integrablef} it must be $k>2$ , while by \eqref{uniformvolume} and \eqref{volume} we have
  $$
  \omega_nR^n\geq\Vol(B_R(x_0))\geq\frac{\Vol(B_{R_0}(x_0))}{\gamma R_0^{k}}R^{k}\qquad \textrm{for all }x_0\in M,\, R\geq R_0
  $$
  and hence $k\leq n$.

 Similarly, if conditions \eqref{uniformvolume} and \eqref{integrablef} hold on $M$ with $f(t)=t^{k-1}(\log t)^b$ for some $k>1$, $b\in\mathbb{R}$ and if $\Ric\geq0$, then it must be either $k\in(2,n)$, $b\in\mathbb{R}$ or $k=2$, $b>1$ or $k=n$, $b\leq0$.
\end{remark}

\subsection{On the space $L^1_G(M)$}\label{L1G}
Our goal here is to establish that, under our running assumptions, the \it strict \rm inclusion $L^1(M)\subsetneq L^1_G(M)$ holds, so that we are really considering a bigger space of initial data. We enforce this statement by also proving that any function behaving like $d(o,x)^{-a}$, with $a>2$, belongs to $L^1_G(M)$, provided volume of balls grows polynomially at infinity, \it independently of dimension\rm.

\begin{proposition}\label{Gprop}
 Let $M$ be a Riemannian manifold with $\Ric\geq0$ satisfying \eqref{noncollapsing}, \eqref{uniformvolume} and \eqref{integrablef}. Then:
 \begin{itemize}
 \item $L^1(M)\subsetneq L^1_G(M)$;
 \item Suppose, in addition, that there exist $\kappa_1>0, \kappa_2>0, \alpha\in (2, n]$ such that
\begin{equation}\label{e1m}
\kappa_1 R^{\alpha}\leq \Vol B_R(x_0)\leq \kappa_2 R^{\alpha} \quad \text{ for all }\; R\geq 1, x_0\in M\,.
\end{equation}
Let $o\in M$ be a fixed reference point, let $u:M\to [0, +\infty)$ be a measurable function. Suppose that, for some $a>0$,
\[u(x)\asymp \frac{1}{[1+ d(x, o)]^{a}} \quad \text{ for all }\; x\in M\,. \]
Then $u\in L^1(M)$ if and only if $a>\alpha$, whereas $u\in L^1_G(M)$ if and only if $a>2$.
\end{itemize}

\end{proposition}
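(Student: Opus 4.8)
The plan is to treat the two bullet points separately, using the Green function estimate \eqref{Green} as the main quantitative tool throughout. For the strict inclusion $L^1(M)\subsetneq L^1_G(M)$, I would first show the inclusion $L^1(M)\subseteq L^1_G(M)$, which amounts to observing that $G(x,x_0)$ is bounded on $M\setminus B_1(x_0)$ by a constant independent of $x_0$: indeed by the upper bound in \eqref{Green} and monotonicity, $G(x,x_0)\le c_2\int_{d(x,x_0)}^{+\infty}\frac{t}{\Vol(B_t(x_0))}\,{\rm d}t\le c_2\int_1^{+\infty}\frac{t}{\Vol(B_t(x_0))}\,{\rm d}t$ for $d(x,x_0)\ge 1$, and this last integral is controlled uniformly in $x_0$ by splitting at $R_0$, using \eqref{noncollapsing} on $[1,R_0]$ (where $\Vol(B_t(x_0))\ge\alpha R_0^{-n}t^n$ by \eqref{volume2}, or simply $\ge\alpha$) and using \eqref{uniformvolume}--\eqref{integrablef} on $[R_0,\infty)$ exactly as in the non-parabolicity computation already carried out in Section \ref{rem1}, which gives the bound $\beta\gamma\frac{R_0 f(R_0)}{\Vol(B_{R_0}(x_0))}$, again uniformly in $x_0$ since $\Vol(B_{R_0}(x_0))\ge\alpha$. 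Hence $\sup_{x_0}\|f\|_{L^1_{x_0,G}}\le\|f\|_{L^1(B_1(x_0))}+C\|f\|_{L^1(M\setminus B_1(x_0))}\le C\|f\|_{L^1(M)}$.

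For strictness I would exhibit an explicit $f\in L^1_G(M)\setminus L^1(M)$. The natural candidate is $f(x)=\frac{1}{[1+d(o,x)]^{a}}$ for a suitable $o\in M$ and $a$ chosen between $2$ and a lower bound for the volume-growth exponent near infinity; but since we only have the mild uniformity assumption \eqref{uniformvolume} rather than a clean polynomial growth, I would instead argue more robustly. The key point is that for $x_0$ far from $o$ and $x$ near $x_0$, the weight $G(x,x_0)$ is roughly $\int_{d(x,x_0)}^{+\infty}\frac{t}{\Vol(B_t(x_0))}\,{\rm d}t$, which at scale $d(x,x_0)=\rho$ behaves like $\rho^2/\Vol(B_\rho(x_0))$ up to the uniformity constants, and one can estimate $\int_{M\setminus B_1(x_0)}f(x)G(x,x_0)\,d\mu(x)$ by decomposing into dyadic annuli around $x_0$ and using the doubling property \eqref{doub} together with \eqref{uniformvolume}; the contribution of the annulus at distance $\sim 2^j$ from $x_0$ is controlled by $\sup_{A_j}f \cdot (2^j)^2$, so convergence of $\sum_j \sup_{A_j} f\cdot 4^j$ suffices, and this holds for $f(x)\asymp [1+d(o,x)]^{-a}$ with $a>2$ essentially because on the annulus $\{d(x,x_0)\sim 2^j\}$ one has $d(o,x)\gtrsim 2^j$ once $d(o,x_0)$ is large — but near $x_0$ (the inner annuli, $j$ small) the function $f$ is just bounded by $[1+d(o,x_0)]^{-a}$ up to a constant, which is small, so the sum of those few terms is controlled. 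Meanwhile $f\notin L^1(M)$ precisely when $a\le\alpha$ fails in the polynomial case; to get a clean example for \emph{every} manifold satisfying the assumptions, I would take $a=2+\epsilon$ with $\epsilon$ small and note that since $\Vol(B_R(o))\le\omega_n R^n$ by \eqref{volume} but also $\Vol(B_R(o))$ can grow arbitrarily slowly above quadratically (consistent with nonparabolicity), one checks $f\notin L^1$ using the coarea-type estimate $\int_M f\,d\mu = a\int_0^\infty \Vol(B_\rho(o)) (1+\rho)^{-a-1}\,d\rho$ (up to constants) which diverges whenever $\Vol(B_\rho(o))\gtrsim \rho^{a}$ for large $\rho$; choosing $\epsilon$ small enough relative to the actual growth rate of $M$ makes this work on any fixed such manifold.

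For the second bullet, under the genuine two-sided polynomial bound \eqref{e1m}, everything becomes a clean computation. Using the coarea/layer-cake identity $\int_M u\,d\mu \asymp \int_0^{+\infty}\frac{\Vol(B_\rho(o))}{[1+\rho]^{a+1}}\,d\rho \asymp \int_1^{+\infty}\rho^{\alpha-a-1}\,d\rho$ gives $u\in L^1(M)\iff a>\alpha$. For $u\in L^1_G(M)$: by \eqref{e1m} and \eqref{Green}, $G(x,x_0)\asymp \int_{d(x,x_0)}^{+\infty} t^{1-\alpha}\,dt \asymp d(x,x_0)^{2-\alpha}$ for $d(x,x_0)\ge 1$ (here $\alpha>2$ is used). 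Then I would estimate $\int_{M\setminus B_1(x_0)} u(x) d(x,x_0)^{2-\alpha}\,d\mu(x)$ uniformly in $x_0$ by a dyadic decomposition in $d(x,x_0)$, bounding each annulus $\{2^j\le d(x,x_0)<2^{j+1}\}$ using $\Vol \lesssim 2^{j\alpha}$ and the pointwise bound $u(x)\lesssim [1+d(o,x)]^{-a}$; summing, one finds the sum converges for $a>2$ and, conversely, by testing with $x_0$ at distance $\to\infty$ from $o$ one sees that $a\le 2$ forces the local term $\int_{B_1(x_0)}u$ to be comparable to $[1+d(o,x_0)]^{-a}$, which is fine, but the annular sum near $x_0$ contributes $\sum_{2^j\lesssim d(o,x_0)}[1+d(o,x_0)]^{-a}2^{2j}\asymp [1+d(o,x_0)]^{2-a}$, which fails to be bounded in $x_0$ exactly when $a\le 2$; combined with the outer annuli this gives the sharp threshold $a>2$.

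\textbf{Main obstacle.} The genuinely delicate part is the uniformity in $x_0$ in the first bullet: with only \eqref{uniformvolume} available (not a clean growth law) one must carefully compare $\Vol(B_t(x_0))$ at different scales and different base points, and the dyadic-annulus estimate of $\int_{M\setminus B_1(x_0)}f\,G(\cdot,x_0)\,d\mu$ must be set up so that the constants coming from \eqref{uniformvolume}, the doubling constant $c_3$, and the Green function constants $c_1,c_2$ all combine into a single $x_0$-independent bound. The second bullet is comparatively routine once \eqref{e1m} is in force, the only care needed being the two-sided (necessity) direction, where one must pick the worst base point $x_0$ to show that the stated exponent thresholds cannot be improved.
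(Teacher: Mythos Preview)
Your proposal has a genuine gap in the strictness part of the first bullet. You propose $f(x) = [1+d(o,x)]^{-(2+\epsilon)}$ and claim that ``choosing $\epsilon$ small enough relative to the actual growth rate of $M$'' yields $f \notin L^1(M)$. But the running assumptions permit manifolds with barely super-quadratic volume growth, e.g.\ $\Vol(B_R(x)) \asymp R^2(\log R)^\delta$ for some $\delta>1$ (take $f(t)=t(\log t)^\delta$ in \eqref{uniformvolume}--\eqref{integrablef}; this is among the paper's own examples). On such a manifold $\int_M [1+d(o,x)]^{-a}\,d\mu \asymp \int_1^\infty R^{1-a}(\log R)^\delta\,dR$ is finite for \emph{every} $a>2$, while for $a\le 2$ one has $G(o,x)\asymp(\log d(o,x))^{1-\delta}$ and hence $\int_{M\setminus B_1(o)}[1+d(o,x)]^{-a}G(o,x)\,d\mu \asymp \int_1^\infty R^{1-a}\log R\,dR = +\infty$, so the power function is not even in $L^1_{o,G}(M)$. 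Thus no choice of exponent produces a power function in $L^1_G(M)\setminus L^1(M)$ on such a manifold, and your ``take $\epsilon$ small'' strategy cannot succeed in general.

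The paper avoids this with an entirely different construction: $v=\sum_{j\ge 1}\mathbf{1}_{B_1(o_j)}$ where $d(o_j,o)\ge 4d(o_{j-1},o)$ and $\sum_j\int_{d(o_j,o)-1}^\infty f(t)^{-1}\,dt<\infty$. Non-collapsing \eqref{noncollapsing} gives $v\notin L^1(M)$ immediately; membership in $L^1_G(M)$ is obtained by bounding each $\int_{B_1(o_j)\setminus B_1(x_0)}G(x,x_0)\,d\mu(x)$ via Lemma~\ref{Greenest}--$ii)$ and a case analysis on the position of $x_0$ relative to the $o_j$, which reduces everything to the summability of the tails $\int_{d(o_j,o)-1}^\infty f^{-1}$. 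The key virtue is that this decouples the failure of $L^1$-integrability (infinitely many disjoint unit balls of volume $\ge\alpha$) from the decay required for $L^1_G$ (governed by $\int_R^\infty f^{-1}$, not by any polynomial volume exponent). Your second-bullet dyadic decomposition in $d(x,x_0)$ is a workable alternative to the paper's three-region split in $d(x,o)$, though your bound $\sup_{A_j}f\cdot(2^j)^2$ is too crude on the annulus $2^j\sim d(o,x_0)$ (which may contain $o$, where $\sup f\asymp 1$); and for necessity the paper simply fixes $x_0=o$ and shows $u\notin L^1_{o,G}$, which is more direct than your proposed $x_0\to\infty$ argument.
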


\begin{proof}
To prove the first assertion, we shall construct an explicit function $v$ such that $v\in L^1_G(M)$ but not in $L^1(M)$. To this end, let $o\in M$ be fixed. Let $R_0$ be as in \eqref{uniformvolume}, and let us choose a sequence $\{o_j\}_{j\in \mathbb N}$ satisfying the following conditions:
\[\begin{aligned}
&d(o_1,o)\ge 4R_0+4,\ \ d(o_j,o)\ge 4d(o_{j-1},o),\,\forall j\ge2;\\
&\sum_{j=1}^{+\infty}\int_{d(o_j,o)-1}^{+\infty}\frac1{f(t)}\,{\rm d}t<+\infty.
\end{aligned}\]
Note that such a sequence indeed exists by condition \eqref{integrablef}. Let
\[
v:=\sum_{j=1}^{+\infty}{\mathbf 1}_{B_1(o_j)}.
\]
We claim that $v\in L^1_G(M)$ but $v\not\in L^1(M)$. The latter claim is immediate since
\[
\int_M v\,{\rm d}\mu=\sum_{j=1}^{+\infty} \textrm{Vol}\,(B_1(o_j))\ge \sum_{j=1}^{+\infty}\alpha=+\infty,
\]
where $\alpha$ is as in condition \eqref{noncollapsing}. The proof of the property $v\in L^1_G(M)$ is more involved. Hereafter, $C$ will denote any inessential positive constant, that may vary from line to line. We start noting that, for all $j>k\ge1$ we have
\begin{equation*}
\begin{aligned}
d(o_j,o_k)&\ge d(o_j,o)-d(o_k,o)\ge \left(4^{j-k}-1\right)d(o_k,o)\ge \left(4^{j-k}-1\right)4^{k-1}d(o_1,o)\\
&\ge 12(R_0+1).
\end{aligned}
\end{equation*}
We shall now estimate the following quantity, for $x_0\in M$, $j\in \mathbb N$:
\begin{equation}\label{18}
I:=\int_{M\setminus B_1(x_0)}\mathbf{1}_{B_1(o_j)}G(x,x_0)\,{\rm d}\mu(x)=\int_{\left\{M\setminus B_1(x_0)\right\}\cap B_1(o_j)}G(x,x_0)\,{\rm d}\mu(x)
\end{equation}

To this end, we shall distinguish two cases. The first one, that we name Case 1, occurs when $d(x_0,o_j)\ge R_0+2$ for all $j\in \mathbb N$. Otherwise, in Case 2 there exists a unique $k_0\ge1$ such that $d(x_0,o_{k_0})<R_0+2$ and $d(x_0,o_j)\ge R_0+2$ for all $j\not=k_0$.

We start discussing Case 1. In that case, by the bound \eqref{8} on the Green function, that will be proved later on in Lemma \ref{Greenest}:
\begin{equation}\label{I}
\begin{aligned}
I&\le  C\int_{\left\{M\setminus B_1(x_0)\right\}\cap B_1(o_j)}\left[ \frac{d(x,x_0)f(d(x,x_0))}{\Vol(B_{d(x,x_0)}(x_0))}\int_{d(x,x_0)}^{+\infty}\frac{1}{f(t)}\,{\rm d}t\right]\,{\rm d}\mu(x)\\
&\le C\int_{\left\{M\setminus B_1(x_0)\right\}\cap B_1(o_j)}\left[ \frac{[d(o_j,x_0)+1]f(d(o_j,x_0)+1)}{\Vol(B_{d(o_j,x_0)-1}(x_0))}\int_{d(o_j,x_0)-1}^{+\infty}\frac{1}{f(t)}\,{\rm d}t\right]\,{\rm d}\mu(x)\\
&\le C\left[ \frac{[d(o_j,x_0)+1]f(d(o_j,x_0)+1)}{\Vol(B_{d(o_j,x_0)-1}(x_0))}\int_{d(o_j,x_0)-1}^{+\infty}\frac{1}{f(t)}\,{\rm d}t\right]\textrm{Vol}\,\left(B_1(o_j)\right).
\end{aligned}\end{equation}
Observe that
\[
d(o_j,x_0)-1= \frac{d(o_j,x_0)+1}2+\frac{d(o_j,x_0)-1}2-1\ge  \frac{d(o_j,x_0)+1}2
\]
since $d(x_0,o_j)\ge R_0+2$ and $R_0\ge1$.  By \eqref{doub}
\[
\textrm{Vol}\left(\,B_{d(o_j,x_0)-1}(x_0)\right)\ge \textrm{Vol}\left(\,B_{\frac{d(o_j,x_0)+1}2}(x_0)\right)\ge C \textrm{Vol}\left(B_{d(o_j,x_0)+1}(x_0)\right).
\]
Then, using also \eqref{volume}, one gets from \eqref{I}:
\[
I\le C \frac{[d(o_j,x_0)+1]f(d(o_j,x_0)+1)}{\Vol(B_{d(o_j,x_0)+1}(x_0))}\int_{d(o_j,x_0)-1}^{+\infty}\frac{1}{f(t)}\,{\rm d}t
\]
Since $d(o_j,x_0)+1\ge R_0$, using \eqref{uniformvolume} we obtain
\begin{equation*}\label{I4}
I\le C \frac{R_0f(R_0)}{\Vol(B_{R_0}(x_0))}\int_{d(o_j,x_0)-1}^{+\infty}\frac{1}{f(t)}\,{\rm d}t\le C \int_{d(o_j,x_0)-1}^{+\infty}\frac{1}{f(t)}\,{\rm d}t
\end{equation*}
by the non-collapsing assumption \eqref{noncollapsing}.

Clearly, either $x_0\in \overline{B_{d(o_1,o)}(o)}$ or there exists a unique $j_0\in\mathbb N$ such that $x_0\in \overline{B_{d(o_{j_0+1},o)}(o)}\setminus B_{d(o_{j_0},o)}(o)$. In the first case,
\begin{equation}\label{I5}
\int_{d(x_0,o_1)-1}^{+\infty}\frac1{f(s)}\,{\rm d}s\le \int_{R_0}^{+\infty}\frac1{f(s)}\,{\rm d}s=\beta.
\end{equation}
by \eqref{integrablef}. On the other hand, if $j\ge2$
\[\begin{aligned}
d(x_0,o_j)&\ge d(o,o_j)-d(o,x_0)\ge d(o,o_j)-d(o,o_1)\\ &\ge 4 d(o,o_{j-1})-d(o_1,o)\ge d(o,o_{j-1})+3\cdot 4^{j-2}d(o_1,o)-d(o_1,o)\\
&\ge d(o,o_{j-1}).
\end{aligned}\]
Then
\[
\int_{d(x_0,o_j)-1}^{+\infty}\frac1{f(s)}\,\textrm{d}s\le \int_{d(o,o_{j-1})-1}^{+\infty}\frac1{f(s)}\,\textrm{d}s
\]
so that
\begin{equation*}\label{I2}
I\le C \int_{d(o,o_{j-1})-1}^{+\infty}\frac1{f(s)}\,\textrm{d}s,\ \ \ \forall j\ge2, \ \forall x_0\in \overline{B_{d(o_1,o)}(o)}.
\end{equation*}

\medskip\noindent We now consider the second case, namely $x_0\in \overline{B_{d(o_{j_0+1},o)}(o)}\setminus B_{d(o_{j_0},o)}(o)$ for a unique $j_0\in\mathbb N$. We shall prove first that
\begin{equation}\label{I3}
I\le C \int_{d(o,o_{j-1})-1}^{+\infty}\frac1{f(s)}\,\textrm{d}s,\ \ \ \forall x_0\in \overline{B_{d(o_{j_0+1},o)}(o)}\setminus B_{d(o_{j_0},o)}(o), \ \forall j\not=1, j\not=j_0+1.
\end{equation}
To this end, consider first the case $j\ge j_0+2$. One has:
\[
\begin{aligned}
d(x_0,o_j)&\ge d(o,o_j)-d(x_0,o)\ge d(o,o_j)-d(o,o_{j_0+1})\ge d(o,o_{j-1})+3d(o,o_{j-1})-d(o,o_{j_0+1})\\ &\ge d(o,o_{j-1}).
\end{aligned}
\]
Hence, for such $j$
\[
I\le C \int_{d(x_0,o_{j})-1}^{+\infty}\frac1{f(s)}\,\textrm{d}s\le C \int_{d(o,o_{j-1})-1}^{+\infty}\frac1{f(s)}\,\textrm{d}s.
\]
so that \eqref{I3} holds for such $j$. We now consider the case $2\le j\le j_0$. Since:
\[\begin{aligned}
d(x_0,o_j)&\ge d(x_0,o)-d(o,o_j)\ge d(o,o_{j_0})-d(o,o_j)\ge \left[4^{j_0-j}-1\right]d(o,o_j)\ge d(o,o_j)\\
&\ge d(o,o_{j-1}),
\end{aligned}\]
then \eqref{I3} holds also for $2\le j\le j_0$.
Finally, if $j=j_0+1$ or $j=1$, since $d(x_0,o_j)\ge R_0+2$, one has
\begin{equation*}\label{I6}
I\le C \int_{d(x_0,o_{j})-1}^{+\infty}\frac1{f(s)}\,\textrm{d}s\le C \int_{R_0}^{+\infty}\frac1{f(s)}\,\textrm{d}s\le C.
\end{equation*}

We are left with considering Case 2. In this second case, for all $j\neq k_0$ we have $d(x_0,o_j)\ge R_0+2$. Thus we can repeat the arguments of Case 1, in order to estimate $I$ in \eqref{18}, for every $j\neq k_0$. On the other hand, if $j=k_0$ we have
\[
d(x,x_0)\leq d(x,o_{k_0})+d(o_{k_0},x)\leq R_0+3,\ \ \ \forall x\in B_1(o_{k_0})
\]
so that by Lemma \ref{Greenest}-$iii)$ with $R=R_0+3$ we deduce
\begin{equation*}\label{I7}
I\le C\int_{\left\{M\setminus B_1(x_0)\right\}\cap B_1(o_{k_0})}\frac{1}{(d(x,x_0))^{n-2}}\,{\rm d}\mu(x)\le C\textrm{Vol}(B_1(o_{k_0}))\le C.
\end{equation*}

Finally, we note that, since $v\in L^\infty(M)$ with $\|v\|_{L^\infty(M)}=1$, for every $x_0\in M$ we have
\begin{equation}\label{I8}
\int_{B_1(x_0)}v(x)\,{\rm d}\mu(x)\le \textrm{Vol}(B_1(x_0))\le C.
\end{equation}

Then for each $x_0\in M$, by \eqref{I5}-\eqref{I8}, we have
\begin{equation*}
\begin{aligned}
\|v\|_{L^1_{x_0, G}}&=\int_{B_1(x_0)}v(x)\, d\mu(x) + \int_{M\setminus B_1(x_0)}v(x) G(x, x_0)\, d\mu(x)\\
&=\int_{B_1(x_0)}v(x)\, d\mu(x) + \sum_{j=1}^{+\infty}\int_{\left\{M\setminus B_1(x_0)\right\}\cap B_1(o_j)}G(x,x_0)\,{\rm d}\mu(x)\\
&\le C+C\sum_{j=1}^{+\infty}\int_{d(o_j,o)-1}^{+\infty}\frac1{f(t)}\,{\rm d}t\le C\,,
\end{aligned}
\end{equation*}
where $C>0$ is a suitable constant independent of $x_0\in M$. We conclude that, by definition, $v\in L^1_G(M)$. The proof of the first part of the statement is then complete.

As for the second part of the statement, to begin with we show that $u\in L^1(M)$ if and only if $a>\alpha.$ Assume first that $a>\alpha$. Let $\operatorname{Meas}(\partial B_r(x_0))$ denote the measure of the sphere centered at $x_0\in M$ and radius $r>0$. By means of the coarea formula and the fact that \begin{equation}\label{coarea}\operatorname{Meas}(\partial B_R(x_0))=\frac d{dr}\Vol(B_R(x_0)),\end{equation} integrating by parts yields
\begin{equation}\label{e8m}
\begin{aligned}
&\int_{B_R(o)} u(x) d\mu(x) \asymp \int_{B_R(o)}\frac{1}{[1+d(x, o)]^a} d\mu(x) =\int_0^R \frac{d}{dr} \Vol(B_r(o))\frac{1}{(1+r)^a} dr\\
&=\left.\frac{\Vol(B_r(o))}{(1+r)^a}\right|_0^R + a\int_0^R \Vol(B_r(o))\frac{1}{(1+r)^{a+1}}dr \asymp \frac{R^\alpha}{(1+R)^a}+1+\int_1^R\frac{r^\alpha}{(1+r)^{a+1}}dr\\
&\asymp \frac{R^\alpha}{(1+ R)^a}+ 1\,
\end{aligned}
\end{equation}
for every large enough $R\gg1$. Since
\[\int_M u(x) d\mu(x) = \lim_{R\to +\infty} \int_{B_R(o)} u(x) d\mu(x), \]
from \eqref{e8m} we deduce that $u\in L^1(M)$ if $a>\alpha\,.$  With similar arguments one easily shows that $u\not\in L^1(M)$ if $a\le\alpha$. 
Now, we show that $u\in L^1_G(M)$ whenever $a>2$.

Let $x_0\in M$. Observe that, in view of \eqref{Green},
\begin{equation}\label{e2m}
\begin{aligned}
\int_{M\setminus B_1(x_0)}& u(x)\, G(x_0, x) d\mu \\ &\leq c_2 \int_{B^c_1(x_0)\cap B_1(o)} u(x) \int_{d(x, x_0)}^{+\infty}\frac{t }{\Vol(B_t(x_0))}dt d\mu(x)\\& +
c_2\int_{B^c_1(x_0)\cap B^c_1(o)} u(x) \int_{d(x_0, x)}^{+\infty}\frac{t}{\Vol(B_t(x_0))}dt d\mu(x)\\
&\leq c_2\Vol(B_1(o))\|u\|_\infty \int_{1}^{+\infty}\frac{t }{\Vol(B_t(x_0))}dt\\ & + C\frac{c_2}{\kappa_1} \int_{B_1^c(x_0)}\frac 1{[1+ d(x, o)]^a}\int_{d(x_0, x)}^{+\infty}t^{1-\alpha}dt d\mu(x)\\
& \leq C + C\int_{M}\frac{1}{[d(x, o)+1]^a}[d(x, x_0)+1]^{2-\alpha} d\mu(x)\,.
\end{aligned}
\end{equation}
Now in order to estimate the last integral, we distinguish three cases: $x\in B_{\frac{d(x_0, o)}{2}}(o), x\in B^c_{2 d(x_0, o)}(o)$ and $x\in B_{2 d(x_0, o)}(o)\setminus B_{\frac{d(x_0, o)}{2}}(o).$

Let $x\in B_{\frac{d(x_0, o)}{2}}(o).$ Then $d(x, x_0)\geq \frac{d(x_0, o)}{2}\,.$ Therefore, in view of the coarea formula and \eqref{e1m}, arguing as in \eqref{e8m},
\begin{equation*}\label{e3m}
\begin{aligned}
&\int_{B_{\frac{d(x_0, o)}2}(o)}\frac{1}{[d(x, o) +1]^a}\frac{1}{[d(x_0, x)+1]^{\alpha-2}}d\mu(x)\\ &\leq \frac{C}{[d(x_0,o)]^{\alpha-2}}\int_{B_{\frac{d(x_0, o)}2}(o)}\frac 1{[d(x, o) +1]^a} d\mu(x)\\
& \leq C \frac{[1+d(x_0, o)]^{\alpha-a}}{[1+d(x_0, o)]^{\alpha-2}} + C \leq \frac{C}{[d(x_0,o)+1]^{a-2}}+ C \leq C\,,
\end{aligned}
\end{equation*}
provided that $a\geq 2$.
Let $x\in B^c_{2 d(x_0, o)}(o)$. Thus $d(x, x_0)\geq \frac{d(x, o)}{2}$. Hence, again by the coarea formula and \eqref{e1m}, arguing as in \eqref{e8m},
\begin{equation*}\label{e4m}
\begin{aligned}
&\int_{B^c_{2 d(x_0, o)}(o)}\frac1{[d(x, o)+1]^a}\frac 1{[d(x_0, x)+1]^{\alpha-2}}d\mu(x)\\ &
\leq \int_{B^c_{2 d(x_0, o)}(o)}\frac1{[d(x, o)+1]^{a+\alpha-2}}d\mu(x) \leq C,
\end{aligned}
\end{equation*}
provided that $a>2$.

Finally, let $x\in B_{2 d(x_0, o)}(o)\setminus B_{\frac{d(x_0, o)}{2}}(o).$ Then
\begin{equation}\label{e5m}
\begin{aligned}
&\int_{ B_{2 d(x_0, o)}(o)\setminus B_{\frac{d(x_o, o)}{2}}(o)}\frac1{[d(x, o)+1]^a}\frac 1{[d(x_0, x)+1]^{\alpha-2}}d\mu(x)\\&
\leq \frac{C}{[d(x_0, o)+1]^a}\int_{B_{3d(x_0, o)}(x_0)}\frac1{[d(x_0, x)+1]^{\alpha-2}}d\mu(x)\leq \frac{C d(x_0, o)^2}{[d(x_0, o)+1]^a}+ C \leq C\,,
\end{aligned}
\end{equation}
provided that $a\geq 2$. Putting together \eqref{e2m}-\eqref{e5m}, we can infer that if $a>2$, then
\begin{equation}\label{e6m}
\sup_{x_0\in M} \int_{M\setminus B_1(x_0)} u(x)\, G(x_0, x) d\mu(x) \leq C\,.
\end{equation}
Furthermore, by virtue of \eqref{e1m},
\begin{equation}\label{e7m}
\sup_{x_0\in M} \int_{B_1(x_0)} u(x) d\mu(x) \leq \|u\|_\infty \kappa_2\,.
\end{equation}
From \eqref{e6m} and \eqref{e7m} we deduce that $u\in L^1_G(M)$ whenever $a>2.$ On the other hand, using similar arguments, it is easily seen that
\[\int_{M\setminus B_1(o)}u(x) G(o, x)\, d\mu(x) = +\infty, \]
whenever $0<a\leq 2$. Thus $u\not\in L^1_{o, G}(M)$ and hence $u\not\in L^1_G(M)$.
\end{proof}

\subsection{Examples}\label{ex} We provide here a number of examples in which conditions \eqref{noncollapsing}, \eqref{uniformvolume} and \eqref{integrablef} hold.

\begin{example}
  Let $(M,g)$ be a Riemannian manifold with $\Ric\geq0$ such that
  $$
  C_1 R^k(\log R)^b\leq\Vol(B_R(x))\leq C_2R^k(\log R)^b \qquad\textrm{for all }x\in M,\,R\geq R_0
  $$
  for some $C_1,C_2>0$, $R_0>1$, with $k\in(2,n)$, $b\in\mathbb{R}$ or $k=2$, $b>1$ or $k=n$, $b\leq0$. Then \eqref{uniformvolume} and \eqref{integrablef} hold, with
  $$f(R)=R^{k-1}(\log R)^b,\qquad R\geq R_0.$$
  Indeed $f$ trivially satisfies \eqref{integrablef} and for every $R\geq r\geq R_0$, $x\in M$ we have
  \begin{equation*}
    \frac{Rf(R)}{\Vol(B_R(x))}=\frac{R^k(\log R)^b}{\Vol(B_R(x))}\leq \frac{1}{C_1}\leq \frac{C_2}{C_1}\frac{r^k(\log r)^b}{\Vol(B_r(x))}=\gamma \frac{rf(r)}{\Vol(B_r(x))}
  \end{equation*}
  that is \eqref{uniformvolume}, with $\gamma=\frac{C_2}{C_1}$.
  \end{example}

\begin{example}
  Let $(M,g)$ be a Riemannian manifold with $n=\operatorname{dim} M\geq3$, $\Ric\geq0$ and maximal volume growth, i.e.
  $$
  A:=\lim_{R\rightarrow+\infty}\frac{\Vol(B_R(x))}{\omega_nR^n}=AVR(M)>0
  $$
  for some, and hence for all, $x\in M$. $A$ is called asymptotic volume ratio of $M$. Then for every $x\in M$, $R>0$ by Bishop-Gromov volume comparison theorem
  $$
  A\omega_nR^n\leq\Vol(B_R(x))\leq\omega_nR^n
  $$
  and therefore \eqref{uniformvolume} holds with $f(R)=R^{n-1}$, which clearly satisfies \eqref{integrablef}, see the previous example.
\end{example}

\begin{example}
  Let $M$ be a homogeneous manifold, i.e. for every $x,y\in M$ there exists an isometry $\varphi:M\rightarrow M$ such that $\varphi(x)=y$. Assume that $\Ric\geq0$ and $M$ non-parabolic. Then $M$ satisfies \eqref{noncollapsing}, \eqref{uniformvolume} and \eqref{integrablef}, with $f(R)=\frac{\Vol(B_R(x))}{R}$ for any $x\in M$.

  Indeed for every $x,y\in M$ and $R>0$ we have $\varphi(B_R(x))=B_R(y)$ for some isometry $\varphi:M\rightarrow M$, and thus $\Vol(B_R(x))=\Vol(B_R(y))$. In particular, choosing $R=1$ we see that \eqref{noncollapsing} is satisfied. If $f(R)=\frac{\Vol(B_R(x))}{R}$ for some fixed $x\in M$, then \eqref{integrablef} holds since
  $$
  \int_{1}^{+\infty}\frac{1}{f(t)}\,{\rm d}t=\int_{1}^{+\infty}\frac{t}{\Vol(B_t(x))}\,{\rm d}t
  $$
  and $M$ is non-parabolic by assumption, and thus the integral is finite. Lastly we see that \eqref{uniformvolume} holds since for every $y\in M$ and every $R>0$
  $$
  \frac{Rf(R)}{\Vol(B_R(y))}=\frac{\Vol(B_R(x))}{\Vol(B_R(y))}=1.
  $$

  Using similar arguments we see that $M$ satisfies \eqref{noncollapsing}, \eqref{uniformvolume} and \eqref{integrablef}, with $f(R)=\frac{\Vol(B_R(x))}{R}$ for a fixed $x\in M$, if $M$ is non-parabolic, $\Ric\geq0$ and there exist constants $R_0\geq1$, $C_1,C_2>0$ such that for every $x,y\in M$, $R\geq R_0$
  $$
  C_1\Vol(B_R(x))\leq\Vol(B_R(y))\leq C_2\Vol(B_R(x)).
  $$
\end{example}

\begin{example}
  Let $M$ be a Riemannian manifold with $\Ric\geq0$ such that for some $\varepsilon>0$ and $R_0\geq1$ one has
  \begin{equation}\label{e1d}
  \Vol(B_{2R}(x))\geq 2^{2+\varepsilon}\Vol(B_R(x))\qquad \textrm{for every }x\in M,\,R\geq R_0.
  \end{equation}
  Then \eqref{uniformvolume}, \eqref{integrablef} are satisfied, with $f(R)=R^{1+\varepsilon}$.

  Indeed $f$ trivially satisfies \eqref{integrablef}. Moreover, if $x\in M$, $R\geq r\geq R_0$ let $k\in\mathbb{N}$ be such that $2^kr\leq R<2^{k+1}r$. Then by \eqref{e1d}
  $$
  \Vol(B_R(x))\geq\Vol(B_{2^kr}(x))\geq 2^{(2+\varepsilon)k}\Vol(B_r(x))>\frac{1}{2^{2+\varepsilon}}\left(\frac{R}{r}\right)^{2+\varepsilon}\Vol(B_r(x)).
  $$
  Thus
  $$
  \frac{Rf(R)}{\Vol(B_R(x))}=\frac{R^{2+\varepsilon}}{\Vol(B_R(x))}<2^{2+\varepsilon}\frac{r^{2+\varepsilon}}{\Vol(B_r(x))}=2^{2+\varepsilon}\frac{rf(r)}{\Vol(B_r(x))},
  $$
  that is \eqref{uniformvolume}.
\end{example}

\begin{example}
  Let $M$ be a Riemannian manifold with $\Ric\geq0$. Assume there exists $\gamma_1,\gamma_2>0$, $R_0\geq1$,  a nondecreasing function $f:[R_0,+\infty)\rightarrow(0,+\infty)$ and $h:M\times[R_0,+\infty)\rightarrow(0,+\infty)$ such that for every $x\in M$ the function $h(x,\cdot)$ is non-decreasing and
  $$
  \gamma_1h(x,R)\leq\frac{\Vol(B_R(x))}{Rf(R)}\leq\gamma_2h(x,R)\qquad\textrm{for every }R\geq R_0.
  $$
  Then condition \eqref{uniformvolume} holds on $M$.

  Indeed for every $x\in M$ and every $R\geq r\geq R_0$ we have
  $$
  \frac{Rf(R)}{\Vol(B_R(x))}\leq\frac{1}{\gamma_1h(x,R)}\leq\frac{1}{\gamma_1h(x,r)}\leq \frac{\gamma_2}{\gamma_1}\frac{rf(r)}{\Vol(B_r(x))}.
  $$
\end{example}

\begin{example}\label{exproduct}
  Let $X$ be a Riemannian manifold with $\Ric_X\geq0$ satisfying \eqref{noncollapsing}, \eqref{uniformvolume}, \eqref{integrablef}, and let $N$ be a compact Riemannian manifold such that $\Ric_N\geq 0$. Then $M=X\times N$, endowed with the product metric, also satisfies $\Ric_M\geq0$ and \eqref{noncollapsing}, \eqref{uniformvolume}, \eqref{integrablef}, for the same function $f$ as that of $X$.

  Indeed, it is well known that $\Ric_M\geq0$. Moreover, if we denote by $d_X,d_N,d_M$ the geodesic distances on each of the three manifolds and by
  $$
  d_2((x,p),(y,q))=\sqrt{d_X^2(x,y)+d_N^2(p,q)}\qquad \textrm{for every }x,y\in X,\, p,q\in N,
  $$
  then there exists $C>0$ such that
  $$
  \frac{1}{C}d_M((x,p),(y,q))\leq d_2((x,p),(y,q))\leq C d_M((x,p),(y,q))\quad \textrm{for every }x,y\in X,\, p,q\in N.
  $$
  If $\rho_0$ denotes the diameter of $N$, then for every $R\geq\rho_0$ it is easy to see that
  $$
  B^M_R((x,p))\subset B^X_{CR}(x)\times N,\qquad B_R^X(x)\times N\subset B^M_{\sqrt2CR}((x,p))
 $$
 for every $x\in X$, $p\in N$. Recalling the doubling property of geodesic balls in manifolds with nonnegative Ricci curvature, see Remark \ref{doubling}, we thus obtain for every $x\in X$, $p\in N$
 $$
 \Vol_M(B^M_R((x,p)))\leq \Vol_X(B^X_{CR}(x))\Vol_N(N)\leq C_2\Vol_X(B^X_R(x))
 $$
 for some $C_2>0$. Similarly,
 $$
 C_1\Vol_X(B^X_{R}(x))\leq \Vol_M(B^M_{R}((x,p)))
 $$
 for every $x\in X$, $p\in N$, for some $C_1>0.$ Now if $\rho_0\leq1$ we readily see that, since $X$ is non collapsing,
 $$
 \inf_{(x,p)\in M}\Vol_M(B^M_{1}((x,p)))\geq C_1\inf_{x\in X}\Vol_X(B^X_{1}(x))>0.
 $$
 On the other hand, if $\rho_0>1$,
 $$
 \inf_{(x,p)\in M}\Vol_M(B^M_{\rho_0}((x,p)))\geq C_1\inf_{x\in X}\Vol_X(B^X_{\rho_0}(x))>C_1\inf_{x\in X}\Vol_X(B^X_{1}(x))>0.
 $$
 Since $\Ric_M\geq0$, by the doubling property of geodesics balls in $M$ we conclude that
 $$
 \inf_{(x,p)\in M}\Vol_M(B^M_{1}((x,p)))\geq c_3^{-k} \inf_{(x,p)\in M}\Vol_M(B^M_{\rho_0}((x,p)))>0,
 $$
 where $k$ is the smallest integer such that $2^k\geq\rho_0.$ Thus \eqref{noncollapsing} holds on $M$.

 If $f$ is the function satisfying \eqref{uniformvolume}, \eqref{integrablef} on $X$, for every $(x,p)\in M$ and every $R\geq r\geq \max\{R_0,\rho_0\}$ we have
 $$
 \frac{Rf(R)}{\Vol_M(B^M_R((x,p)))}\leq \frac{1}{C_1}\frac{Rf(R)}{\Vol_X(B^X_R(x))} \leq \frac{\gamma }{C_1}\frac{rf(r)}{\Vol_X(B^X_r(x))} \leq \frac{\gamma C_2 }{C_1} \frac{rf(r)}{\Vol_M(B^M_r((x,p)))}.
 $$
 That is, conditions \eqref{uniformvolume}, \eqref{integrablef} hold on $M$, for the same function $f$ as that of $X$.
  \end{example}

\section{A priori estimates}\label{apriori}

We prove in this section a number of technical results, that will be crucial in proving our main ones later. We start with some estimates on the Green function.

\begin{lemma}\label{Greenest}
  Let $M$ be a $n$-dimensional Riemannian manifold with $\Ric\geq0$ and assume that conditions \eqref{noncollapsing}, \eqref{uniformvolume} and \eqref{integrablef} hold. Let $G$ be the minimal positive Green function on $M$. Then
  \begin{itemize}
    \item[i)] for every $x,x_0\in M$ $$G(x,x_0)\geq \frac{c_1}{(n-2)\omega_n}\frac{1}{(d(x,x_0))^{n-2}},$$
    \item[ii)] for every $x,x_0\in M$ with $d(x,x_0)\geq R_0$
     \begin{equation}\label{8}
   G(x,x_0)\leq c_2\gamma \frac{d(x,x_0)f(d(x,x_0))}{\Vol(B_{d(x,x_0)}(x_0))}\int_{d(x,x_0)}^{+\infty}\frac{1}{f(t)}\,{\rm d}t,
   \end{equation}
    \item[iii)] for every $x,x_0\in M$ with $d(x,x_0)\leq R$, with $R\ge R_0$,
    \begin{equation*}
    G(x,x_0)\leq \frac{c_2}{\alpha}\left[\frac{R^n}{n-2}+\gamma\beta f(R)R^{n-1}\right]\frac{1}{(d(x,x_0))^{n-2}},
    \end{equation*}
    \item[iv)] for every $0<R<R_0$ and every $x_0\in M$
    \begin{equation}\label{14}
    \int_{B_R(x_0)}G(x,x_0)\,{\rm d}\mu(x)\leq \frac{\omega_n c_2 n}{2\alpha}\left[\frac{R_0^n}{n-2}+\gamma\beta f(R_0)R_0^{n-1}\right]R^2
    \end{equation}
    \item[v)] for every $R\geq R_0$ and every $x_0\in M$
    \begin{equation}\label{5}\begin{aligned}
    \int_{B_R(x_0)}G(x,x_0)\,{\rm d}\mu(x)&\leq c_2\max\{\gamma,\tfrac{1}{2}\} \left(Rf(R)\int_R^{+\infty}\frac{1}{f(t)}\,{\rm d}t+R^2\right)\\
    &=: c_2\max\{\gamma,\tfrac{1}{2}\} h(R).
    \end{aligned}\end{equation}
  \end{itemize}
\end{lemma}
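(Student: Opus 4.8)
The plan is to deduce all five estimates from the two-sided comparison \eqref{Green} between $G(x,x_0)$ and the volume integral $\int_{d(x,x_0)}^{+\infty}\frac{t}{\Vol(B_t(x_0))}\,dt$ (available here since $\Ric\geq0$ and, as already observed, \eqref{uniformvolume}--\eqref{integrablef} force nonparabolicity and $n\geq3$), combined with the Bishop--Gromov bounds \eqref{volume}, \eqref{volume2} and the uniformity hypothesis \eqref{uniformvolume}. Parts $i)$ and $ii)$ are immediate: for $i)$ insert $\Vol(B_t(x_0))\leq\omega_n t^n$ into the \emph{lower} bound in \eqref{Green}, getting $G(x,x_0)\geq\frac{c_1}{\omega_n}\int_{d(x,x_0)}^{+\infty}t^{1-n}\,dt=\frac{c_1}{(n-2)\omega_n}\,d(x,x_0)^{2-n}$; for $ii)$, writing $d=d(x,x_0)\geq R_0$, decompose $\int_d^{+\infty}\frac{t}{\Vol(B_t(x_0))}\,dt=\int_d^{+\infty}\frac{tf(t)}{\Vol(B_t(x_0))}\frac{dt}{f(t)}$ and use \eqref{uniformvolume} with $r=d$, $R=t$ to pull $\frac{df(d)}{\Vol(B_d(x_0))}$ out of the integral; together with the \emph{upper} bound in \eqref{Green} this is exactly \eqref{8}.

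For $iii)$, with $d=d(x,x_0)\leq R$ and $R\geq R_0$, split $\int_d^{+\infty}=\int_d^{R}+\int_R^{+\infty}$. On $[d,R]$ use \eqref{volume2}, i.e. $\Vol(B_t(x_0))\geq\frac{\alpha}{R^n}t^n$ for $t\leq R$, to get $\int_d^R\frac{t}{\Vol(B_t(x_0))}\,dt\leq\frac{R^n}{\alpha(n-2)}\,d^{2-n}$; on $[R,+\infty)$ argue as in $ii)$ (with $r=R$) and bound $\Vol(B_R(x_0))\geq\alpha$ to obtain $\int_R^{+\infty}\frac{t}{\Vol(B_t(x_0))}\,dt\leq\frac{\gamma\beta\,Rf(R)}{\alpha}$, then use $1\leq(R/d)^{n-2}$ to rewrite this as $\frac{\gamma\beta\,f(R)R^{n-1}}{\alpha}\,d^{2-n}$. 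Multiplying by $c_2$ and collecting the two terms yields the stated bound.

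For $iv)$ and $v)$ integrate the pointwise bounds over $B_R(x_0)$ via the coarea formula, setting $\phi(r):=\Vol(B_r(x_0))$, which is absolutely continuous with $\mathcal H^{n-1}(\partial B_r(x_0))=\phi'(r)$ for a.e.\ $r$ (as $|\nabla d(\cdot,x_0)|=1$ off a null set). For $iv)$, since $R<R_0$ every $x\in B_R(x_0)$ has $d(x,x_0)\leq R_0$, so $iii)$ applies with $R$ replaced by $R_0$; it remains to estimate $\int_{B_R(x_0)}d(x,x_0)^{2-n}\,d\mu=\int_0^R r^{2-n}\phi'(r)\,dr$, which after one integration by parts and $\phi(r)\leq\omega_n r^n$ (which also kills the boundary term at $0$) is $\leq\omega_n R^2+\frac{n-2}{2}\omega_n R^2=\frac{n}{2}\omega_n R^2$, giving \eqref{14}. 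For $v)$, the upper bound in \eqref{Green} is radial, so by coarea $\int_{B_R(x_0)}G(\cdot,x_0)\,d\mu\leq c_2\int_0^R\phi'(r)\Psi(r)\,dr$ with $\Psi(r):=\int_r^{+\infty}\frac{t}{\phi(t)}\,dt$; integrating by parts, $\int_0^R\phi'\Psi=\phi(R)\Psi(R)-\lim_{r\to0^+}\phi(r)\Psi(r)+\int_0^R r\,dr$, where the limit vanishes because $\phi(r)\leq\omega_n r^n$ and, by \eqref{volume2} with $R=1$, $\Psi(r)\leq\frac{r^{2-n}}{\alpha(n-2)}+\Psi(1)$ near $0$. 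Finally $\phi(R)\Psi(R)=\Vol(B_R(x_0))\int_R^{+\infty}\frac{t}{\Vol(B_t(x_0))}\,dt\leq\gamma\,Rf(R)\int_R^{+\infty}\frac{dt}{f(t)}$ by \eqref{uniformvolume} (as in $ii)$), so the whole expression is $\leq c_2\bigl(\gamma\,Rf(R)\int_R^{+\infty}\frac{dt}{f(t)}+\tfrac12 R^2\bigr)\leq c_2\max\{\gamma,\tfrac12\}\,h(R)$, which is \eqref{5}.

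The main obstacle is not depth but care at the two improper integrations by parts at $r=0$ in $iv)$ and $v)$: one needs simultaneously the Bishop--Gromov upper bound $\Vol(B_r(x_0))\leq\omega_n r^n$ and the local lower bound $\Vol(B_r(x_0))\geq\alpha r^n$ for $r\leq1$ coming from \eqref{volume2}, so that the boundary terms $r^{2-n}\phi(r)$ and $\phi(r)\Psi(r)$ tend to $0$; one should also record that $r\mapsto\Vol(B_r(x_0))$ is absolutely continuous, which legitimizes both the coarea formula and the integration by parts. The remaining content is bookkeeping of the constants.
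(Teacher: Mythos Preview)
Your proof is correct and follows essentially the same route as the paper's: parts $i)$--$iii)$ are derived exactly as in the paper from \eqref{Green}, \eqref{volume}, \eqref{volume2} and \eqref{uniformvolume}, and parts $iv)$--$v)$ via coarea and integration by parts just as the authors do. If anything, you are slightly more explicit than the paper in justifying that the boundary terms at $r=0$ vanish in $iv)$ and $v)$, which the paper leaves implicit.
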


\begin{remark}\label{15}
  Note that the function $h$ defined in \eqref{5} is increasing and diverging at $+\infty$ as $R$ tends to $+\infty$. Indeed,
  $$
  h'(R)=f(R)\int_R^{+\infty}\frac{1}{f(t)}\,{\rm d}t+Rf'(R)\int_R^{+\infty}\frac{1}{f(t)}\,{\rm d}t+R\geq R_0\geq1,\qquad R\geq R_0
  $$
  since $f$ is positive and nondecreasing.

  Note also that by Lemma \ref{Greenest}-ii), \eqref{noncollapsing}, \eqref{uniformvolume} and \eqref{integrablef} for every $x,x_0\in M$ with $d(x,x_0)\geq R_0$ we have
  $$
  G(x,x_0)\leq c_2\gamma \frac{d(x,x_0)f(d(x,x_0))}{\Vol(B_{d(x,x_0)}(x_0))}\int_{d(x,x_0)}^{+\infty}\frac{1}{f(t)}\,{\rm d}t\leq \frac{c_2\gamma\beta}{\alpha}R_0f(R_0).
  $$
  while by Lemma \ref{Greenest}-iii) for every $x,x_0\in M$ with $1\leq d(x,x_0)\leq R_0$ we have
  $$
  G(x,x_0)\leq \frac{c_2}{\alpha}\left[\frac{R_0^n}{n-2}+\gamma\beta f(R_0)R_0^{n-1}\right]\,.
  $$
\end{remark}

\begin{example}
  If $f(t)=t^{k-1}(\log t)^b$ for some $k\in(2,n)$, $b\in\mathbb{R}$ or $k=n$, $b\leq0$, then for some constants $a_1,a_2>0$

$$a_1R^2\leq h(R)\leq a_2R^2\qquad\textrm{for every }R\geq R_0.$$

If $f(t)=t(\log t)^b$ for some $b>1$, then $$h(R)=R^2\left(\frac{1}{b-1}\log R+1\right)\qquad\textrm{for every }R\geq R_0.$$

\end{example}

\begin{proof}[Proof of Lemma \ref{Greenest}]
  By our assumptions $M$ is non-parabolic, see Remark \ref{rem1}. By \eqref{Green} and \eqref{volume} we have $$G(x,x_0)\geq c_1\int_{d(x,x_0)}^{+\infty}\frac{t}{\Vol(B_t(x_0))}\,{\rm d}t\geq \frac{c_1}{\omega_n}\int_{d(x,x_0)}^{+\infty}\frac{1}{t^{n-1}}\,{\rm d}t,$$
  which gives $i)$.

  By \eqref{Green} and \eqref{uniformvolume}, if $d(x,x_0)\geq R_0$,
  \begin{align*}
  G(x,x_0)&\leq c_2\int_{d(x,x_0)}^{+\infty}\frac{t}{\Vol(B_t(x_0))}\,{\rm d}t\\
  &=c_2\int_{d(x,x_0)}^{+\infty}\frac{tf(t)}{\Vol(B_t(x_0))}\frac{1}{f(t)}\,{\rm d}t\\
  &\leq c_2\gamma\frac{d(x,x_0)f(d(x,x_0))}{\Vol(B_{d(x,x_0)}(x_0))}\int_{d(x,x_0)}^{+\infty}\frac{1}{f(t)}\,{\rm d}t,
  \end{align*}
  that is $ii)$.

 If $d(x,x_0)\leq R$, with $R\geq R_0$, by \eqref{Green}
  $$
  G(x,x_0)\leq c_2\int_{d(x,x_0)}^{+\infty}\frac{t}{\Vol(B_t(x_0))}\,{\rm d}t=c_2\int_{d(x,x_0)}^{R}\frac{t}{\Vol(B_t(x_0))}\,{\rm d}t+c_2\int_{R}^{+\infty}\frac{t}{\Vol(B_t(x_0))}\,{\rm d}t.
  $$
  Then by \eqref{volume2}, \eqref{uniformvolume} and \eqref{integrablef}
  \begin{align*}
    G(x,x_0) & \leq c_2\frac{R^n}{\alpha} \int_{d(x,x_0)}^{R}\frac{1}{t^{n-1}}\,{\rm d}t+c_2\gamma\beta\frac{Rf(R)}{\Vol(B_{R}(x_0))}\\
    & \leq \frac{c_2R^n}{(n-2)\alpha}\frac{1}{(d(x,x_0))^{n-2}}+c_2\gamma\beta\frac{Rf(R)}{\alpha}\\
    &\leq\frac{c_2}{\alpha}\left[\frac{R^n}{n-2}+\gamma\beta f(R)R^{n-1}\right]\frac{1}{(d(x,x_0))^{n-2}},
  \end{align*}
  that is $iii)$.

   By using $iii)$ with $R=R_0$ and \eqref{volume}, for every $0<r<R_0$ and $x_0\in M$, we have
   \[\int_{B_r(x_0)}G(x, x_0) d\mu(x) \leq \frac{c_2}{\alpha}\left[\frac{R_0^n}{n-2}+\gamma\beta f(R_0)R_0^{n-1}\right]\int_{B_r(x_0)}\frac{1}{(d(x,x_0))^{n-2}}d\mu(x)\,.\]
   In view of the coarea formula, recalling \eqref{coarea}, integrating by parts yields $iv).$

  Now let $R\geq R_0$.  By \eqref{coarea}, the coarea formula and \eqref{Green}, integrating by parts we obtain
  \begin{align*}
    \int_{B_R(x_0)}G(x,x_0)\,{\rm d}\mu(x) & \leq c_2 \int_{B_R(x_0)}\int_{d(x,x_0)}^{+\infty}\frac{t}{\Vol(B_t(x_0))}\,{\rm d}td\mu(x)\\
    & = c_2 \int_0^R \operatorname{Meas}(\partial B_r(x_0))\int_r^{+\infty} \frac{t}{\Vol(B_t(x_0))}\,{\rm d}tdr\\
    &= c_2 \Vol(B_R(x_0))\int_R^{+\infty}\frac{t}{\Vol(B_t(x_0))}\,{\rm d}t + \frac{c_2}{2}R^2\\
    &\leq c_2\gamma Rf(R)\int_R^{+\infty}\frac{1}{f(t)}\,{\rm d}t+\frac{c_2}{2}R^2\\
    &\leq c_2\max\{\gamma,\tfrac{1}{2}\} \left(Rf(R)\int_R^{+\infty}\frac{1}{f(t)}\,{\rm d}t+R^2\right),
  \end{align*}
  which is $v)$.
\end{proof}

We now prove a crucial bound, that can be qualitatively stated by saying that the potential of a test function is quantitatively bounded above and below by the Green function, away from the pole. A similar bound was proved in \cite{BBGM}, by a completely different method involving the parabolic Harnack inequality. In the present geometrical setting, we provide a proved based only on elliptic estimates.
\begin{proposition}\label{stimapr}
  Let $M$ be a $n$-dimensional Riemannian manifold with $\Ric\geq0$ and assume that conditions \eqref{noncollapsing}, \eqref{uniformvolume} and \eqref{integrablef} hold.
  Let $\psi\in C(M), \psi\geq 0$ with $\operatorname{supp}\, \psi \subset B_{\sigma}(x_0)$, for some $x_0\in M$ and $\sigma\in (0, 1]$. Then there exist two constants $\gamma_1>0$ and $\gamma_2>0$, only depending on $c_1, c_2, c_3, \alpha, \beta, \gamma, n, R_0$, such that
 \begin{equation}\label{e1f}
 \gamma_1\|\psi\|_1 \left(d(x, x_0)^{n-2}\wedge 1\right) G(x, x_0) \leq (-\Delta)^{-1}\psi(x)\leq \gamma_2 \|\psi\|_\infty \operatorname{Vol}(B_\sigma(x_0))G(x, x_0)
 \end{equation}
for any $x\in M\,.$
\end{proposition}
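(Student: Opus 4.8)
The plan is to use the integral representation $(-\Delta)^{-1}\psi(x)=\int_M G(x,y)\,\psi(y)\,d\mu(y)$ (valid for $\psi$ as in the statement, $G$ denoting the minimal positive Green function), so that both inequalities in \eqref{e1f} reduce to a pointwise comparison of $G(x,y)$ with $G(x,x_0)$ for $y\in B_\sigma(x_0)$. Writing $\Phi_z(r):=\int_r^{+\infty}\frac{t}{\Vol(B_t(z))}\,{\rm d}t$, so that \eqref{Green} reads $c_1\Phi_{x_0}(d(x,x_0))\le G(x,x_0)\le c_2\Phi_{x_0}(d(x,x_0))$, the main point is a \emph{Harnack-type comparison away from the pole}: there is $c_\ast>1$, depending only on $c_1,c_2,c_3$, such that
\[
c_\ast^{-1}\,G(x,x_0)\ \le\ G(x,y)\ \le\ c_\ast\,G(x,x_0)\qquad\text{whenever } y\in B_\sigma(x_0),\ \sigma\le 1,\ d(x,x_0)\ge 2\sigma .
\]
This follows from three facts: (a) since $d(x_0,y)\le\sigma\le t$ for $t\ge\sigma$, one has $B_t(x_0)\subset B_{2t}(y)$ and $B_t(y)\subset B_{2t}(x_0)$, hence by volume doubling (Remark \ref{doubling}) $\Phi_y(r)\asymp\Phi_{x_0}(r)$ on $[\sigma,+\infty)$; (b) the triangle inequality together with $\sigma\le\tfrac12 d(x,x_0)$ gives $\tfrac12 d(x,x_0)\le d(x,y)\le\tfrac32 d(x,x_0)$; (c) the ``reverse doubling'' estimates $\Phi_{x_0}(r/2)\le C\,\Phi_{x_0}(r)$ and $\Phi_{x_0}(3r/2)\ge C^{-1}\,\Phi_{x_0}(r)$, which are standard consequences of volume doubling. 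Chaining these at scale $r=d(x,x_0)\ (\ge\sigma)$ and using \eqref{Green} yields the comparison.

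\emph{Upper bound.} Since $\psi\ge0$, $(-\Delta)^{-1}\psi(x)\le\|\psi\|_\infty\int_{B_\sigma(x_0)}G(x,y)\,d\mu(y)$. If $d(x,x_0)\ge2\sigma$, the comparison gives $\int_{B_\sigma(x_0)}G(x,y)\,d\mu(y)\le c_\ast\Vol(B_\sigma(x_0))G(x,x_0)$, which is \eqref{e1f}. If $d(x,x_0)<2\sigma$, then $B_\sigma(x_0)\subset B_{3\sigma}(x)$, so by Lemma \ref{Greenest}~$iv)$ when $3\sigma<R_0$, or $v)$ when $3\sigma\ge R_0$ (in which case $\sigma\ge R_0/3$ is bounded below, so $h(3\sigma)\le C\sigma^2$), one gets $\int_{B_\sigma(x_0)}G(x,y)\,d\mu(y)\le\int_{B_{3\sigma}(x)}G(x,y)\,d\mu(y)\le C\sigma^2$; on the other hand $\Vol(B_\sigma(x_0))\ge\alpha\sigma^n$ by \eqref{volume2} and $G(x,x_0)\ge\frac{c_1}{(n-2)\omega_n}d(x,x_0)^{2-n}\ge C\sigma^{2-n}$ by Lemma \ref{Greenest}~$i)$ (using $d(x,x_0)<2\sigma$ and $n\ge3$), so $\Vol(B_\sigma(x_0))G(x,x_0)\ge C\sigma^2$, and \eqref{e1f} follows in this case as well.

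\emph{Lower bound.} If $d(x,x_0)\ge2\sigma$, then by the comparison $(-\Delta)^{-1}\psi(x)\ge\int_{B_\sigma(x_0)}G(x,y)\psi(y)\,d\mu(y)\ge c_\ast^{-1}G(x,x_0)\|\psi\|_1\ge c_\ast^{-1}\|\psi\|_1\bigl(d(x,x_0)^{n-2}\wedge1\bigr)G(x,x_0)$, since the last factor is $\le1$. If $d(x,x_0)<2\sigma$ (hence $<2$), then $d(x,y)<3\sigma\le3$ for $y\in B_\sigma(x_0)$, so Lemma \ref{Greenest}~$i)$ gives $G(x,y)\ge\frac{c_1}{(n-2)\omega_n}d(x,y)^{2-n}\ge\frac{c_1\,3^{2-n}}{(n-2)\omega_n}$, whence $(-\Delta)^{-1}\psi(x)\ge C\|\psi\|_1$. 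Finally, $\bigl(d(x,x_0)^{n-2}\wedge1\bigr)G(x,x_0)\le C$ for $d(x,x_0)<2$, by Lemma \ref{Greenest}~$iii)$ applied with $R=\max\{2,R_0\}$ (which gives $G(x,x_0)\le C\,d(x,x_0)^{2-n}$), so $(-\Delta)^{-1}\psi(x)\ge C\|\psi\|_1\ge C'\|\psi\|_1\bigl(d(x,x_0)^{n-2}\wedge1\bigr)G(x,x_0)$.

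The main obstacle is the behavior near the pole, $d(x,x_0)\lesssim\sigma$: the Harnack comparison genuinely fails there (indeed $G(x,y)$ and $G(x,x_0)$ are of different orders as $x\to x_0$), which is precisely why the weight $d(x,x_0)^{n-2}\wedge1$ is unavoidable in the lower bound; in that region the global behavior of $G$ must be replaced by the local bounds $i)$ and $iii)$ of Lemma \ref{Greenest}, and the hypothesis $\sigma\le1$ (so that $\sigma^{2-n}\ge1$) is what makes the various powers of $\sigma$ combine in the right direction. Everything else — the reverse-doubling estimates for $\Phi_{x_0}$ and the interchange of balls centered at $x_0$ and at $y$ — reduces to the volume doubling property.
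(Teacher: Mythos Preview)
Your proof is correct and follows essentially the same strategy as the paper: both split into a ``far'' regime $d(x,x_0)\ge 2\sigma$ (handled via the Li--Yau bound \eqref{Green} and volume doubling) and a ``near'' regime (handled via the local estimates of Lemma~\ref{Greenest} $i)$, $iii)$, $iv)$, $v)$). The only differences are cosmetic: you package the far-regime argument as a single Harnack-type comparison $G(x,y)\asymp G(x,x_0)$ (your points (a)--(c) are exactly the change of variable plus doubling that the paper carries out inline in \eqref{e3f}--\eqref{e5f} and \eqref{e14f}), and for the lower bound you split at $d(x,x_0)=2\sigma$ while the paper splits at $d(x,x_0)=1$, which is immaterial since your Harnack bound already yields the full $G(x,x_0)$ (stronger than $(d^{n-2}\wedge1)G$) on the overlap $2\sigma\le d(x,x_0)<1$.
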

\begin{proof}
First we show the estimate from above in \eqref{e1f}. In order to do it, first we suppose that $d(x, x_0)\geq 2\sigma.$ Then
\[d(x, x_0)-\sigma\leq d(x, x_0)- d(x_0, y)\leq d(x,y)\quad \text{ for all }\, y\in B_\sigma(x_0)\,.\]
Therefore, by using \eqref{Green},
 \begin{equation}\label{e3f}
 \begin{aligned}
 (-\Delta)^{-1}\psi(x)&\leq c_2 \int_{B_{\sigma}(x_0)}\psi(y)\int_{d(x,x_0)-\sigma}^\infty \frac{t}{\Vol(B_t(x))} dt d\mu(y)\\
 &\leq c_2\|\psi\|_\infty \Vol(B_\sigma(x_0)) \int_{d(x,x_0)-\sigma}^\infty \frac{t}{\Vol(B_t(x))} dt\,,
 \end{aligned}
 \end{equation}
 for some $c_2>0$. We now perform the change of variable $\tau:= t+\sigma$. Thus \eqref{e3f} yields
  \begin{equation}\label{e4f}
 \begin{aligned}
 (-\Delta)^{-1}\psi(x)&\leq c_2\|\psi\|_\infty \Vol(B_\sigma(x_0)) \int_{d(x,x_0)}^\infty \frac{\tau-\sigma}{\Vol(B_{\tau-\sigma}(x))} d\tau\\
& \leq c_2 \|\psi\|_\infty \Vol(B_\sigma(x_0))  \int_{d(x,x_0)}^\infty \frac{\tau}{\Vol(B_{\tau-\sigma}(x))} d\tau\,.
 \end{aligned}
 \end{equation}
Observe that $\tau\geq d(x, x_0)\geq 2\sigma$ implies $\tau-\sigma\geq \frac{\tau}2\,.$ Hence, by the doubling property (see Remark \ref{doubling}),
\[\Vol(B_{\tau-\sigma}(x))\geq \Vol(B_{\tau/2}(x))\geq c_3^{-1} \Vol(B_{\tau}(x)),\]
for some $c_3>0$. Consequently, from \eqref{e4f} we can infer that
\begin{equation}\label{e5f}
\begin{aligned}
(-\Delta)^{-1}\psi(x)&\leq c_3 c_2 \|\psi\|_\infty \Vol(B_{\sigma}(x_0))\int_{d(x,x_0)}^\infty \frac{\tau}{\Vol(B_{\tau}(x))} d\tau\\
&\leq \frac{c_3 c_2}{c_1} \|\psi\|_\infty \Vol(B_{\sigma}(x_0)) G(x, x_0)\,,
\end{aligned}
\end{equation}
which implies the estimate from above in \eqref{e1f}, whenever $d(x, x_0)\geq 2\sigma.$ Now instead assume that $0<d(x,x_0)<2\sigma.$ We have
\begin{equation}\label{e6f}
\begin{aligned}
(-\Delta)^{-1}\psi(x) &= \int_{B_\sigma(x_0)}\psi(x) G(x, y) d\mu(y) \\
&\leq \|\psi\|_\infty \int_{B_{\sigma}(x_0)}G(x,y) d\mu(y)\,.
\end{aligned}
\end{equation}
Observe that  if $y\in B_\sigma(x_0)$, then
$$
d(x,y)\leq d(x, x_0)+d(x_0, y)\leq 3\sigma\,.
$$
Thus
\begin{equation*}\label{e7f}
\begin{aligned}
\int_{B_\sigma(x_0)}G(x,y) d\mu(y)& \leq \int_{B_{3\sigma}(x)} G(x,y) d\mu(y) \\
&\leq c_2 \int_{B_{3\sigma}(x)}\int_{d(x,y)}^{\infty}\frac{t}{\Vol(B_{t}(x))}dt d\mu(y)\\ &= c_2 \int_0^{3\sigma}\operatorname{Meas}(\partial B_r(x))\int_{r}^{\infty}\frac{t}{\Vol(B_{t}(x))} dt dr,
\end{aligned}
\end{equation*}
here the coarea formula has been used. Integrating by parts, we get
\begin{equation}\label{e8f}
\begin{aligned}
\int_{B_\sigma(x_0)} G(x,y) d\mu(y) &\leq c_2 \left[\Vol(B_r(x))\int_r^{\infty}\frac{t}{\Vol(B_{t}(x))} dt \right]_{r=0}^{r=3\sigma}+ c_2 \int_0^{3\sigma}r dr \\
&= c_2 \Vol(B_{3\sigma}(x))\int_{3\sigma}^{\infty}\frac{t}{\Vol(B_{t}(x))} dt + \frac{9 c_2}{2}\sigma^2\,.
\end{aligned}
\end{equation}
Note that, for any $y\in B_{3\sigma}(x)$,
\[d(y, x_0)\leq d(x,y)+ d(x, x_0) \leq 3\sigma + 2\sigma < 8\sigma\,,\]
 so
 \[B_{3\sigma}(x)\subset B_{8\sigma}(x_0)\,.\]
 By the doubling property (see Remark \ref{doubling}),
 \begin{equation}\label{e9f}
 \Vol(B_{3\sigma}(x))\leq \Vol(B_{8\sigma}(x_0))\leq c_3^3 \Vol(B_\sigma(x_0))\,.
 \end{equation}
By \eqref{volume2}, since $\sigma\in(0,1]$, we have
\begin{equation}\label{e11f}
\Vol(B_\sigma(x_0))\geq \alpha\sigma^n.
\end{equation}
Thus, from Lemma \ref{Greenest}-(i) and \eqref{e11f} we obtain
 \begin{equation}\label{e12bf}
 \sigma^2=\frac{\sigma^n}{\sigma^{n-2}}\leq \frac{2^{n-2}}{\alpha}\Vol(B_\sigma(x_0))\frac 1{d(x, x_0)^{n-2}}\leq \frac{2^{n-2}(n-2)\omega_n}{\alpha c_1}\Vol(B_\sigma(x_0))G(x, x_0)\,.
 \end{equation}
 From \eqref{Green}, \eqref{e6f}, \eqref{e8f}, \eqref{e9f} and \eqref{e12bf}, we can infer that
 \begin{equation}\label{e12f}
 \begin{aligned}
 (-\Delta)^{-1}\psi(x)&\leq \|\psi\|_\infty \int_{B_{\sigma}(x_0)} G(x,y) d\mu(y)\\
& \leq c_2 c_3^3\|\psi\|_\infty \Vol(B_\sigma(x_0))\int_{d(x, x_0)}^{\infty}\frac{t}{\Vol(B_t(x))}dt\\
& + \frac{9(n-2)2^{n-3}c_2\omega_n}{\alpha c_1}\|\psi\|_\infty \Vol(B_\sigma(x_0))G(x, x_0)\\ &\leq \tilde \gamma_2\|\psi\|_\infty \Vol(B_\sigma(x_0))G(x, x_0),
 \end{aligned}
 \end{equation}
 with
 \[\tilde \gamma_2=\frac{c_2 c_3^3}{c_1}+\frac{9(n-2) 2^{n-3}c_2\omega_n}{\alpha c_1},\]
that yields the estimate from above in \eqref{e1f}, whenever $0<d(x,x_0)<2\sigma.$
From \eqref{e5f} and \eqref{e12f} we obtain the estimate from above in \eqref{e1f} for any $x\in M$, with $\gamma_2=\max\left\{\frac{c_3 c_2}{c_1}, \tilde\gamma_2\right\}=\tilde \gamma_2\,.$

Let us now show the inequality from below in \eqref{e1f}. To this end, note that in view of \eqref{Green}, for any $x\in M$,
\begin{equation}\label{e13f}
\begin{aligned}
(-\Delta)^{-1}\psi(x) &= \int_{B_\sigma(x_0)}\psi(y) G(x,y) d\mu(y) \\
&\geq c_1 \int_{B_\sigma(x_0)}\psi(y) \int_{d(x,y)}^{\infty} \frac{t}{\Vol(B_t(x))}dt d\mu(y)\,
\end{aligned}
\end{equation}
for some $c_1>0$. Since, for any $y\in B_\sigma(x_0)$,
\[d(x,y)\leq d(x, x_0)+ d(y, x_0)\leq d(x, x_0)+\sigma,\]
from \eqref{e13f}, by performing the change of variable $\tau:=t-\sigma,$ we obtain
\begin{equation}\label{e14f}
\begin{aligned}
(-\Delta)^{-1}\psi(x) &\geq c_1  \int_{B_\sigma(x_0)}\psi(y) \int_{d(x,x_0)+\sigma}^{\infty} \frac{t}{\Vol(B_t(x))}dt d\mu(y)\\
&\geq c_1\|\psi\|_1  \int_{d(x,x_0)}^{\infty} \frac{\tau +\sigma}{\Vol(B_{\tau+\sigma}(x))}d\tau.
\end{aligned}
\end{equation}
We now distinguish two cases: $(a)$ $d(x, x_0)\geq 1$ and $(b)$ $d(x, x_0)<1.$
We begin with case $(a)$. Since
$\tau\geq d(x, x_0)\geq 1\geq \sigma,$
we have $\tau+\sigma\leq 2\tau.$ By the doubling property (see Remark \ref{doubling}),
\[\Vol(B_{\tau+\sigma}(x_0))\leq \Vol(B_{2\tau(x_0)})\leq c_3 \Vol(B_{\tau}(x_0)).\]
Hence \eqref{e14f} yields
\begin{equation*}
\begin{aligned}
(-\Delta)^{-1}\psi(x)&\geq \frac{c_1}{c_3}\|\psi\|_1 \int_{d(x, x_0)}^{\infty}\frac{\tau}{\Vol(B_{\tau}(x_0))}d\tau\\
&\geq \frac{c_1}{c_3 c_2}\|\psi\|_1G(x, x_0)\,.
\end{aligned}
\end{equation*}
Let us now deal with case $(b)$. Due to Lemma \ref{Greenest}-(i) and the fact that
\[d(x,y)\leq d(x, x_0)+d(x_0, y)\leq 1+\sigma \leq 2 \quad \text{ for any }\;y\in B_\sigma (x_0),\]
we get
\begin{equation*}\label{e16f}
\begin{aligned}
(-\Delta)^{-1}\psi(x) &\geq \frac{c_1}{(n-2)\omega_n}\int_{B_\sigma(x_0)}\frac{\psi(y)}{d(x,y)^{n-2}}d\mu(y)\\
&\geq \frac{c_1}{(n-2)\omega_n 2^{n-2}}\|\psi\|_1\,.
\end{aligned}
\end{equation*}
Now note that by Lemma \ref{Greenest}-(iii), since $d(x,x_0)<1\leq R_0$, we have
$$(d(x,x_0))^{n-2}G(x,x_0)\leq \frac{c_2}{\alpha}\left[\frac{R_0^n}{n-2}+\gamma\beta f(R_0)R_0^{n-1}\right].$$
Consequently, for $d(x,x_0)<1$,
\begin{equation*}\label{e21f}
\begin{aligned}
(-\Delta)^{-1}\psi(x)&\geq  \frac{c_1 \|\psi\|_1}{(n-2)\omega_n 2^{n-2}}\\
& \geq \frac{c_1\alpha \|\psi\|_1 G(x, x_0)(d(x, x_0))^{n-2}}{(n-2)\omega_n 2^{n-2}c_2}\left(\gamma\beta R_0^{n-1} f(R_0) + \frac{R_0^n}{(n-2)} \right)^{-1}\,.
\end{aligned}
\end{equation*}
It follows that for every $x\in M$
\begin{equation*}\label{e22f}
\begin{aligned}
(-\Delta)^{-1}\psi(x)\geq \gamma_1 \|\psi\|_1\left(d(x, x_0)^{n-2}\wedge 1 \right) G(x, x_0),
\end{aligned}
\end{equation*}
with
\[\gamma_1=\frac{c_1}{c_2 c_3}\wedge\frac{c_1\alpha}{(n-2)\omega_n 2^{n-2}c_2}\left(\gamma\beta R_0^{n-1} f(R_0)+\frac{R_0^n}{(n-2)} \right)^{-1}\,.\]
\end{proof}

\begin{remark}\label{remsigma} Let $\sigma_0>1.$
It is direct to see that by minor changes in the proof of Proposition \ref{stimapr}, the estimate from above in \eqref{e1f} still holds whenever $\sigma\in (0, \sigma_0]$, with the modified constant  \[ \gamma_2=\frac{c_2 c_3^3}{c_1}+\frac{9(n-2) 2^{n-3}c_2\sigma_0^n\omega_n}{\alpha c_1}.\]
\end{remark}

\begin{lemma}\label{lemma1f}
Let assumptions of Proposition \ref{stimapr} be satisfied. Let $x_0\in M$. Then there exists $\psi\in L^\infty_c(M), \psi\geq 0, \psi\not\equiv0$ and two constants $k_1, k_2>0$, only depending on $c_1, c_2, c_3, \alpha, \beta, \gamma, n, R_0$, such that, for all $\varphi\in L^1_{x_0, G}(M), \varphi\geq 0$, we have
\begin{equation*}\label{e25bf}
k_1\int_M \varphi(-\Delta)^{-1} \psi\, d\mu \leq \|\varphi\|_{L^1_{x_0, G}} \leq k_2 \int_M \varphi(-\Delta)^{-1}\psi\,{\rm d}\mu\,.
\end{equation*}
\end{lemma}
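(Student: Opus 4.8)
The plan is to choose $\psi$ to be a bump function supported in a ball of radius $\sigma\le 1$ centered at $x_0$, say $\psi\in L^\infty_c(M)$ with $\psi\ge 0$, $\psi\not\equiv 0$ and $\supp\psi\subset B_{\sigma}(x_0)$ for a fixed $\sigma\in(0,1]$ (for instance $\sigma=1$). With this choice, Proposition \ref{stimapr} applies and gives, for every $x\in M$,
\[
\gamma_1\|\psi\|_1\left(d(x,x_0)^{n-2}\wedge 1\right) G(x,x_0)\leq (-\Delta)^{-1}\psi(x)\leq \gamma_2\|\psi\|_\infty \Vol(B_\sigma(x_0)) G(x,x_0).
\]
Then for any $\varphi\in L^1_{x_0,G}(M)$, $\varphi\ge 0$, I would multiply these pointwise bounds by $\varphi$ and integrate over $M$; the upper bound on $(-\Delta)^{-1}\psi$ immediately yields
\[
\int_M \varphi\,(-\Delta)^{-1}\psi\,d\mu\leq \gamma_2\|\psi\|_\infty\Vol(B_\sigma(x_0))\int_M \varphi(x)G(x,x_0)\,d\mu(x),
\]
and the integral $\int_M \varphi G(x,x_0)\,d\mu$ is comparable to $\|\varphi\|_{L^1_{x_0,G}}$ up to the region $B_1(x_0)$, which must be handled separately; this direction will give the left inequality $k_1\int_M\varphi(-\Delta)^{-1}\psi\le \|\varphi\|_{L^1_{x_0,G}}$ once the local piece is controlled.

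For the other inequality, split $\|\varphi\|_{L^1_{x_0,G}}=\int_{B_1(x_0)}\varphi\,d\mu+\int_{M\setminus B_1(x_0)}\varphi\,G(x,x_0)\,d\mu$. On $M\setminus B_1(x_0)$ one has $d(x,x_0)^{n-2}\wedge 1=1$, so the lower bound in Proposition \ref{stimapr} gives $G(x,x_0)\le (\gamma_1\|\psi\|_1)^{-1}(-\Delta)^{-1}\psi(x)$ there, hence $\int_{M\setminus B_1(x_0)}\varphi G\,d\mu\le C\int_M\varphi(-\Delta)^{-1}\psi\,d\mu$. The remaining term $\int_{B_1(x_0)}\varphi\,d\mu$ is the delicate one: here $G(x,x_0)$ degenerates (it behaves like $d(x,x_0)^{-(n-2)}$ near the pole), so the factor $d(x,x_0)^{n-2}\wedge 1$ in the lower bound exactly compensates, giving $(-\Delta)^{-1}\psi(x)\ge \gamma_1\|\psi\|_1 d(x,x_0)^{n-2}G(x,x_0)$ on $B_1(x_0)$. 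I would then invoke Lemma \ref{Greenest}(i), $G(x,x_0)\ge \frac{c_1}{(n-2)\omega_n}d(x,x_0)^{-(n-2)}$, to get the clean pointwise bound $(-\Delta)^{-1}\psi(x)\ge \frac{c_1\gamma_1}{(n-2)\omega_n}\|\psi\|_1$ on $B_1(x_0)$, i.e. $(-\Delta)^{-1}\psi$ is bounded below by a positive constant on $B_1(x_0)$. Multiplying by $\varphi$ and integrating yields $\int_{B_1(x_0)}\varphi\,d\mu\le C\int_{B_1(x_0)}\varphi(-\Delta)^{-1}\psi\,d\mu\le C\int_M\varphi(-\Delta)^{-1}\psi\,d\mu$, and adding the two pieces gives $\|\varphi\|_{L^1_{x_0,G}}\le k_2\int_M\varphi(-\Delta)^{-1}\psi\,d\mu$.

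Finally I would collect constants: $k_2$ comes from the maximum of the two constants produced above (one from the $M\setminus B_1(x_0)$ estimate involving $(\gamma_1\|\psi\|_1)^{-1}$, one from the local estimate involving $\frac{(n-2)\omega_n}{c_1\gamma_1\|\psi\|_1}$), and $k_1=(\gamma_2\|\psi\|_\infty\Vol(B_\sigma(x_0)))^{-1}$ for the first inequality, with the additional observation that $\int_{B_1(x_0)}\varphi\,d\mu$ dominates $\gamma_1\|\psi\|_1\int_{B_1(x_0)}\varphi(d(x,x_0)^{n-2}\wedge1)G(x,x_0)\,d\mu$ via Lemma \ref{Greenest}(iii) bounding $d(x,x_0)^{n-2}G(x,x_0)$ from above by a constant on $B_1(x_0)\subset B_{R_0}(x_0)$, so that in fact $\int_M\varphi(-\Delta)^{-1}\psi\,d\mu\le C\|\varphi\|_{L^1_{x_0,G}}$ from the upper bound alone. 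Since $\|\psi\|_1,\|\psi\|_\infty,\Vol(B_\sigma(x_0))$ are all fixed finite positive numbers once $\psi$ is chosen (and by Bishop--Gromov $\Vol(B_\sigma(x_0))\le\omega_n$ while $\Vol(B_1(x_0))\ge\alpha$ uniformly in $x_0$), the constants $k_1,k_2$ depend only on $c_1,c_2,c_3,\alpha,\beta,\gamma,n,R_0$ as claimed. The main obstacle is the bookkeeping around the pole $x_0$: one must be careful that the degeneracy of $G$ on $B_1(x_0)$ is matched against the $d(x,x_0)^{n-2}\wedge 1$ weight on both sides and against the unweighted $L^1$ integral in the definition of $\|\cdot\|_{L^1_{x_0,G}}$, which is exactly where Lemma \ref{Greenest}(i) and (iii) are used to pin down the two-sided comparison.
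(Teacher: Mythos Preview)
Your plan is essentially the paper's approach, and the right inequality (producing $k_2$) is handled correctly: on $M\setminus B_1(x_0)$ the lower bound of Proposition~\ref{stimapr} controls $G$ by $(-\Delta)^{-1}\psi$, and on $B_1(x_0)$ the combination of that lower bound with Lemma~\ref{Greenest}(i) shows $(-\Delta)^{-1}\psi$ is bounded below by a positive constant.

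There is, however, a genuine gap in your argument for the \emph{left} inequality on $B_1(x_0)$. You need $\int_{B_1(x_0)}\varphi\,(-\Delta)^{-1}\psi\,d\mu\le C\int_{B_1(x_0)}\varphi\,d\mu$, i.e.\ a uniform \emph{upper} bound $(-\Delta)^{-1}\psi\le C$ on $B_1(x_0)$. The upper estimate in Proposition~\ref{stimapr} only gives $(-\Delta)^{-1}\psi(x)\le C\,G(x,x_0)$, which diverges as $x\to x_0$ and is therefore useless here. Your ``additional observation'' that $\int_{B_1}\varphi\,d\mu$ dominates $\gamma_1\|\psi\|_1\int_{B_1}\varphi\,(d^{n-2}\wedge 1)G\,d\mu$ via Lemma~\ref{Greenest}(iii) is a true inequality, but it involves the \emph{lower} bound of Proposition~\ref{stimapr} and points the wrong way: it does not imply any upper estimate on $(-\Delta)^{-1}\psi$. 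Proposition~\ref{stimapr} simply does not provide the pointwise upper bound you need near the pole.

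The paper fills this gap by a direct computation: choosing $\psi=\tfrac{1}{\Vol(B_{1/2}(x_0))}\chi_{B_{1/2}(x_0)}$, for $x\in B_1(x_0)$ one has $B_{1/2}(x_0)\subset B_{3/2}(x)$ and $\Vol(B_{1/2}(x_0))\ge\alpha 2^{-n}$, whence
\[
(-\Delta)^{-1}\psi(x)=\frac{1}{\Vol(B_{1/2}(x_0))}\int_{B_{1/2}(x_0)}G(x,y)\,d\mu(y)\le \frac{2^n}{\alpha}\int_{B_{3/2}(x)}G(x,y)\,d\mu(y),
\]
and the last integral is bounded by a constant depending only on the stated parameters, via the integrated Green function estimates Lemma~\ref{Greenest}(iv) or (v) (according as $R_0>\tfrac32$ or $R_0\le\tfrac32$). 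This is exactly the missing step in your outline.
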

\begin{proof}
We choose $\psi=\frac{1}{\Vol\big(B_{\frac{1}{2}}(x_0)\big)}\chi_{B_{\frac{1}{2}}(x_0)}$. By virtue of Proposition \ref{stimapr}, since $$\Vol(B_{\frac 1 2}(x_0))\|\psi\|_\infty=\|\psi\|_1=1,$$ we immediately get
\begin{equation}\label{e26bf}
\begin{aligned}
\gamma_1\int_{{M\setminus B_1(x_0)}}\varphi(x) G(x,x_0)\,{\rm d}\mu(x)&\leq\int_{M\setminus B_1(x_0)} \varphi(x) (-\Delta)^{-1}\psi(x) d\mu(x)\\
 &\leq \gamma_2 \int_{M\setminus B_1(x_0)}\varphi(x) G(x, x_0) d\mu(x).
\end{aligned}
\end{equation}
On the other hand, by the lower bound in Proposition \ref{stimapr} and by Lemma \ref{Greenest}-i), for every $x\in B_1(x_0)$ we have
\begin{equation}\label{3}
(-\Delta)^{-1}\psi(x)\geq\gamma_1 (d(x,x_0))^{n-2}G(x,x_0)\geq\frac{\gamma_1 c_1}{(n-2)\omega_n}\,.
\end{equation}
Note also that for every $x\in B_1(x_0)$ we have $B_\frac{1}{2}(x_0)\subset B_\frac{3}{2}(x)$ and that by \eqref{volume2} we have
\begin{equation*}
\Vol(B_{\frac 1 2}(x_0))\geq \alpha \frac 1{2^n}\,.
\end{equation*}
Then for every $x\in B_1(x_0)$
\begin{align*}
(-\Delta)^{-1}\psi(x)&=\int_{M}\psi(y)G(y,x)\,{\rm d}\mu(y)\\
&=\frac{1}{\Vol(B_\frac{1}{2}(x_0))}\int_{B_\frac{1}{2}(x_0)}G(y,x)\,{\rm d}\mu(y)\leq \frac{2^n}{\alpha}\int_{B_\frac{3}{2}(x)}G(y,x)\,{\rm d}\mu(y)\,.
\end{align*}
Now if $R_0>\frac{3}{2}$ by Lemma \ref{Greenest}-iv) we deduce
\begin{equation}\label{1}
(-\Delta)^{-1}\psi(x)\leq\frac{9\omega_n c_2 n2^{n-3}}{\alpha^2}\left[\frac{R_0^n}{n-2}+\gamma\beta f(R_0)R_0^{n-1}\right],
\end{equation}
while if $R_0\leq\frac{3}{2}$ by Lemma \ref{Greenest}-v) we obtain
\begin{equation}\label{2}
(-\Delta)^{-1}\psi(x)\leq\frac{2^nc_2}{\alpha}\max\{\gamma,\tfrac{1}{2}\}h\left(\tfrac{3}{2}\right)\,.
\end{equation}
Putting together \eqref{3}, \eqref{1}, \eqref{2} we see that for every $x\in B_1(x_0)$ we have
$$
C_1\leq(-\Delta)^{-1}\psi(x)\leq C_2
$$
for some $C_1, C_2>0$ as in the statement. Then
\begin{equation}\label{4}
C_1\int_{B_1(x_0)} \varphi(x) d\mu(x)\leq \int_{B_1(x_0)} \varphi(x) (-\Delta)^{-1}\psi(x) d\mu(x) \leq  C_2\int_{B_1(x_0)} \varphi(x) d\mu(x) \,.
\end{equation}
Recalling the definition of $\|\varphi\|_{L^1_{x_0, G}(M)}$, see \eqref{e24f}, combining \eqref{e26bf} and \eqref{4} the thesis easily follows.
\end{proof}

\section{Existence of approximating solutions}\label{approx}
In this Section we prove existence of WDS to problem \eqref{e23f}, when the initial datum $u_0\in L^1(M)\cap L^\infty(M)$.
In fact, in the proof of the general existence result in Theorem \ref{exists} we will exploit such solutions to construct an approximating sequence which will converge to the WDS of \eqref{e23f} with $u_0\in L^1_G(M)$.

At first, we have the following existence result for {\it mild} solutions (see e.g. \cite{BBGM} for the standard definition), namely solutions in the semigroup sense. It is a consequence of \cite[Proposition 5.2]{BBGM}.
\begin{proposition}\label{eximildsol}
Let $u_0\in L^1(M)\cap L^\infty(M), u_0\geq 0$. Then there exists a unique nonnegative mild solution $u\in C((0,+\infty);L^1(M))$ to problem \eqref{e23f}. Such solution $u$ satisfies the following {\em monotonicity property}
\[t\mapsto t^{\frac1{m-1}}u(x,t) \quad \text{ is nondecreasing for a.e.}  x\in M,\]
and the following $L^p(M)-${\em nonexpansivity property}:
\[\|u(t)\|_{L^p(M)}\leq \|u_0\|_{L^p(M)} \quad \text{ for all } t\geq 0 \text{ and for any }\; 1\leq p\leq \infty\,.\]
In addition, let $v_0\in L^1(M)\cap L^\infty(M), v_0\geq 0$ and $v\in C((0, +\infty); L^1(M))$ be the mild solution to problem \eqref{e23f} corresponding to the initial datum $v_0$. Then
\[\|u(t)-v(t)\|_{L^1(M)}\leq \|u_0-v_0 \|_{L^1(M)} \quad \text{ for all }\; t\geq 0\,.\]
\end{proposition}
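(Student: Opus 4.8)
The plan is to deduce the statement from \cite[Proposition~5.2]{BBGM}, after checking that its hypotheses are met in the present geometric setting, and then to read off the three listed properties from the general theory of nonlinear contraction semigroups.

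First I would verify the hypotheses. Since $\Ric\geq0\geq-k$ with $k=0$, the manifold $M$ is complete and, by the classical criterion for Ricci curvature bounded below, stochastically complete; hence the heat semigroup $e^{t\Delta}$ is Markovian, order preserving and a contraction on $L^p(M)$ for every $1\leq p\leq\infty$, while nonparabolicity makes $(-\Delta)^{-1}$ a well-defined positive operator on the relevant classes of functions. These are precisely the structural facts underlying the construction in \cite[Section~5]{BBGM}; in particular, the additional functional inequalities required for the \emph{main} results of \cite{BBGM} play no role in the proof of their Proposition~5.2. Consequently the existence and uniqueness of a nonnegative mild solution $u\in C((0,+\infty);L^1(M))$ follows, and the $L^1$-contraction estimate $\|u(t)-v(t)\|_{L^1(M)}\leq\|u_0-v_0\|_{L^1(M)}$ is an immediate consequence of the fact that the generated semigroup is an $L^1$ contraction.

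Next I would recall why the remaining two properties hold. The mild solution is obtained through the Crandall--Liggett implicit scheme, $u(t)=\lim_{n\to\infty}(I+\tfrac{t}{n}A)^{-n}u_0$, where $A$ denotes the $m$-accretive realization of $w\mapsto-\Delta(w^m)$ in $L^1(M)$. Each resolvent $(I+\lambda A)^{-1}$ is order preserving and a contraction in $L^p(M)$ for every $1\leq p\leq\infty$: this is the standard elliptic comparison argument applied to $w-\lambda\Delta(w^m)=f$ (comparison with constants gives the $L^\infty$ bound, Stampacchia-type truncation gives the $L^p$ bounds for $1\leq p<\infty$), and passing to the limit yields $\|u(t)\|_{L^p(M)}\leq\|u_0\|_{L^p(M)}$ for all $t\geq0$. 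For the monotonicity property I would use that problem \eqref{e23f} is invariant under the rescaling $u(x,t)\mapsto\lambda\,u(x,\lambda^{m-1}t)$, because $r\mapsto r^m$ is homogeneous of degree $m>1$; since $\lambda u_0\geq u_0$ when $\lambda\geq1$, order preservation of the semigroup gives $\lambda\,u(x,\lambda^{m-1}t)\geq u(x,t)$ for a.e.\ $x\in M$ and all $t>0$, which after the substitution $s=\lambda^{m-1}t$ says exactly that $t\mapsto t^{1/(m-1)}u(x,t)$ is nondecreasing (equivalently, the B\'enilan--Crandall estimate $\partial_t u\geq -u/((m-1)t)$).

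I do not expect any genuine analytic obstacle here: the only point that needs care is to confirm that \cite[Proposition~5.2]{BBGM} is indeed stated under hypotheses satisfied by our manifolds --- essentially completeness together with stochastic completeness and Markovianity of the heat semigroup --- rather than under the stronger functional-inequality assumptions used for the main theorems of \cite{BBGM}. Should a self-contained argument be preferred, the work would reduce to reproducing, on a general complete stochastically complete Riemannian manifold, the $m$-accretivity of $A$ on $L^1(M)$ together with the elliptic comparison estimates for its resolvent, all of which are by now classical (see e.g.\ \cite{V} for the Euclidean prototype).
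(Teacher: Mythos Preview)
Your proposal is correct and follows exactly the paper's approach: the paper simply states that the proposition ``is a consequence of \cite[Proposition~5.2]{BBGM}'' and gives no further argument, so your verification that the hypotheses of that result are met under $\Ric\geq0$ (hence $\Ric\geq-k$ with $k=0$, completeness and stochastic completeness), together with your recollection of how the three listed properties arise from the Crandall--Liggett construction and the B\'enilan--Crandall scaling argument, is precisely the intended route, just spelled out in more detail than the paper does.
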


We aim at proving that the mild solution is also a WDS to \eqref{e23f}. In order to do this, we need some preliminary results.

Let  $\{T_t\}_{t\ge0}$ be the heat semigroup, which is strongly continuous in $L^p(M)$ for every $p\in [1, \infty)$, and satisfies 
\[\lim_{h\to 0}\frac{u- T_h u}{h}=(-\Delta) u \quad \text{ in }\; L^p(M)\]
for every $p\in [1, \infty)$, since $-\Delta$ is the generator of $\{T_t\}$.
%Furthermore,
%\[\int_0^\infty T_t u\, dt = (-\Delta)^{-1}u \quad \text{ in } L^p(M) \quad \text{ for any } p\in [1, +\infty]\,.\]

\begin{lemma}\label{lemmaMM}
 Let $M$ be a $n$-dimensional Riemannian manifold with $\Ric\geq0$ and assume that conditions \eqref{noncollapsing}, \eqref{uniformvolume} and \eqref{integrablef} hold. Then
$(-\Delta)^{-1}:L^1(M)\cap L^\infty(M)\to L^\infty(M)$ is continuous. Furthermore,
\begin{equation}\label{e60f}
\lim_{r\to \infty} \int_0^r T_t u\, dt = \int_0^\infty T_t u\, dt=  (-\Delta)^{-1}u  \quad \text{ in } L^\infty(M)\,.
\end{equation}
\end{lemma}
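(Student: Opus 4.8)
The plan is to prove the two assertions separately, relying on the Green function estimates of Lemma \ref{Greenest} and on the standard spectral-theoretic identity $(-\Delta)^{-1}=\int_0^\infty T_t\,dt$ in the appropriate sense. For the continuity of $(-\Delta)^{-1}\colon L^1(M)\cap L^\infty(M)\to L^\infty(M)$, I would use the pointwise representation $(-\Delta)^{-1}u(x)=\int_M G(x,y)u(y)\,d\mu(y)$, split the integral over $B_1(x)$ and $M\setminus B_1(x)$, and bound each piece by a quantity independent of $x$. On the inner ball, Lemma \ref{Greenest}-(iii) (or (iv)) gives $\int_{B_1(x)}G(x,y)\,d\mu(y)\le C$ uniformly in $x$, so this term is controlled by $\|u\|_\infty$; on the outer region, Remark \ref{15} shows $G(x,y)\le C$ whenever $d(x,y)\ge 1$ (since $G(x,y)\le \frac{c_2\gamma\beta}{\alpha}R_0 f(R_0)$ there, and similarly for $1\le d\le R_0$), so that term is controlled by $\|u\|_1$. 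Hence $\|(-\Delta)^{-1}u\|_\infty\le C(\|u\|_1+\|u\|_\infty)$, which is exactly linearity-plus-boundedness, giving continuity.

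For the identity \eqref{e60f}, I would first recall that for $u\in L^1(M)\cap L^\infty(M)$ one has the monotone/absolutely convergent representation $(-\Delta)^{-1}u=\int_0^\infty T_t u\,dt$ valid at least pointwise and in $L^1_{\mathrm{loc}}$, coming from $G(x,y)=\int_0^\infty K(t,x,y)\,dt$ and Fubini (legitimate for $u\ge 0$, then by linearity for signed $u$). The point is to upgrade convergence of the truncated integrals $\int_0^r T_t u\,dt$ to convergence in $L^\infty(M)$. Writing the tail as $\int_r^\infty T_t u\,dt$, I would estimate its sup-norm: using $T_t u = T_{t/2}(T_{t/2}u)$ together with the ultracontractivity/heat-kernel bound $K(t,x,y)\asymp 1/\Vol(B_{\sqrt t}(x))$ (from \cite{LY}) one gets $\|T_s u\|_\infty\le \frac{C}{F(\sqrt s)}\|u\|_1$ with $F(R)=\inf_x\Vol(B_R(x))$, so the tail is bounded by $C\|u\|_1\int_r^\infty \frac{ds}{F(\sqrt s)}$; nonparabolicity (Section \ref{rem1}, the convergence of $\int^\infty \frac{t}{\Vol(B_t(x))}\,dt$) forces this tail integral to vanish as $r\to\infty$. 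Alternatively, and perhaps more in the spirit of the paper, one can avoid heat-kernel asymptotics by writing $\int_r^\infty T_t u\,dt = (-\Delta)^{-1}(T_r u)$ and applying the already-proven boundedness of $(-\Delta)^{-1}$ to $T_r u$, whose $L^1$ and $L^\infty$ norms go to $0$ — but $L^1$ norm need not go to $0$, so this needs the refined estimate $\|(-\Delta)^{-1} w\|_\infty \le C\|w\|_\infty^\vartheta\|w\|_1^{1-\vartheta}$-type interpolation, or simply the direct tail estimate above; I would present the direct heat-kernel tail estimate as the clean route.

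The main obstacle I anticipate is making the tail estimate $\big\|\int_r^\infty T_t u\,dt\big\|_\infty\to 0$ genuinely \emph{uniform} in $x\in M$: the heat kernel bound $K(t,x,x)\asymp 1/\Vol(B_{\sqrt t}(x))$ has implied constants depending only on the geometry (not on $x$, by the Li--Yau estimate under $\Ric\ge 0$), but one must combine it with the non-collapsing and uniformity assumptions \eqref{noncollapsing}--\eqref{integrablef} to see that $\int_r^\infty \frac{ds}{F(\sqrt s)}<\infty$ with $F$ diverging, i.e. to get a uniform-in-$x$ decay rate. This is where \eqref{uniformvolume} enters: it converts the pointwise convergence $\int^\infty t/\Vol(B_t(x))\,dt<\infty$ into a bound $\int_R^\infty t/\Vol(B_t(x))\,dt\le \gamma\beta\, Rf(R)/\Vol(B_R(x))$ with the right-hand side going to zero uniformly, as already used inside Lemma \ref{Greenest}-(v). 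Once that uniform tail bound is in hand, the convergence in $L^\infty(M)$ in \eqref{e60f} follows immediately, and the first display of \eqref{e60f} (the two integrals being equal) is just the definition of the improper integral together with the pointwise Fubini identity.
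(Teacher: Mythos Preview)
Your proposal is correct. For the tail convergence in \eqref{e60f} you and the paper take essentially the same route: the paper uses the Li--Yau bound $p(x,y,t)\le C/\Vol(B_{\sqrt t}(x))$, combines it with \eqref{volume2} for $t<R_0^2$ and with \eqref{noncollapsing}--\eqref{uniformvolume} for $t\ge R_0^2$ to get a uniform pointwise bound $p(x,y,t)\le \hat C/\phi(t)$ with $\int^\infty 1/\phi<\infty$ by \eqref{integrablef}, and then bounds the tail $\|\int_r^\infty T_t u\,dt\|_\infty$ directly. This is exactly your ``direct heat-kernel tail estimate''.

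For the continuity of $(-\Delta)^{-1}$ your approach differs from the paper's. You work with the Green-function representation and split over $B_1(x)$ and its complement, invoking Lemma \ref{Greenest}-(iv) and Remark \ref{15}; the paper instead works with the heat-semigroup representation $(-\Delta)^{-1}u=\int_0^\infty T_t u\,dt$, splits the time integral at $t=R_0^2$, uses the $L^\infty$-contractivity of $T_t$ on the short-time piece to get $R_0^2\|u\|_\infty$, and uses the heat-kernel bound on the long-time piece to get $2\beta\hat C\|u\|_1$. Both give the same conclusion $\|(-\Delta)^{-1}u\|_\infty\le C(\|u\|_\infty+\|u\|_1)$; your route has the advantage of recycling estimates already proved in Lemma \ref{Greenest}, while the paper's route sets up the heat-kernel bound once and uses it for both the continuity and the tail convergence, keeping the two parts of the lemma within a single framework.
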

\begin{proof}
Let $ K(t,x,y)$ is the (minimal) heat kernel. By \cite[Corollary 3.1]{LY} there exists a constant $C>0$ such that
\begin{equation}\label{e61f}
K(x,y,t)\leq \frac{C}{\Vol(B_{\sqrt t}(x))}\quad \text{ for any } x,y\in M, t>0\,.
\end{equation}
By virtue of \eqref{volume2},
\begin{equation*}\label{e63f}
\Vol(B_{\sqrt t}(x))\geq \frac{\alpha}{R_0^{n}}t^{\frac n2}\quad \text{ for all }\; x\in M, 0<t< R_0^2\,.
\end{equation*}
On the other hand, in view of \eqref{uniformvolume} and \eqref{noncollapsing},
\begin{equation}\label{e62f}
\Vol(B_{\sqrt t}(x)))\geq \frac{\alpha}{\gamma R_0 f(R_0)}\sqrt{t} f(\sqrt t)\quad \text{ for all }\; x\in M, t\geq  R_0^2\,.
\end{equation}
Putting together \eqref{e61f}-\eqref{e62f} we can infer that for some $\hat C>0$
\begin{equation}\label{e64f}
K (x,y,t)\leq \frac{\hat C}{\phi(t)} \quad \text{ for all }\; x,y\in M, t>0,
\end{equation}
where
\[\phi(t):=\begin{cases}
t^{\frac n2} & \textrm{ for any }\; 0<t<R_0^2\\
\sqrt t f(\sqrt t) & \textrm{ for any }\; t\geq R_0^2\,.
\end{cases}
\]
Now we show that $(-\Delta)^{-1}:L^1(M)\cap L^\infty(M)\to L^\infty(M)$ is continuous. Indeed, from \eqref{e64f} and \eqref{integrablef} we get
\begin{equation*}\label{e65f}
\begin{aligned}
\|(-\Delta)^{-1} u\|_\infty &= \left\|\int_0^\infty T_t u\, dt \right\|_\infty \leq \int_0^{R_0^2} \left\| T_t u \right\|_\infty \, dt + \int_{R_0^2}^\infty \left\| T_t u \right\|_\infty \, dt\\
& \leq R_0^2 \|u\|_\infty + \int_{R_0^2}^\infty \left\|T_t u \right\|_\infty dt  \leq R_0^2 \|u\|_\infty + \hat{C}\|u\|_1 \int_{R_0^2}^\infty \frac 1{\phi(t)} dt \\
& =  R_0^2 \|u\|_\infty + 2\beta\hat{C}  \|u\|_1 \leq \max\{R_0^2, 2\beta\hat{C}\} (\|u\|_\infty+\|u\|_1 )\,,
\end{aligned}
\end{equation*}
which yields the desired continuity. In order to obtain \eqref{e60f}, note that for every $r>R_0^2$,
\begin{equation*}
\left\|\int_0^r T_t u\, dt -\int_0^\infty T_t u\, dt \right\|_\infty \leq \hat{C}\|u\|_1 \int_r^\infty \frac 1{\phi(t)}\, dt=2\hat{C}\|u\|_1 \int_{\sqrt{r}}^\infty \frac 1{f(s)}\, ds \to 0
\end{equation*}
as $r$ tends to $\infty$, by \eqref{integrablef}. Therefore,
\[
\lim_{r\to \infty} \int_0^r T_t u\, dt =\int_0^\infty T_t u \, dt = (-\Delta)^{-1} u \quad \text{ in }\; L^\infty(M)\,.
\]
which is \eqref{e60f}.
\end{proof}

\begin{lemma}\label{lemmaMM2}
Let $M$ be a $n$-dimensional Riemannian manifold with $\Ric\geq0$ and assume that conditions \eqref{noncollapsing}, \eqref{uniformvolume} and \eqref{integrablef} hold.
Suppose that $u, (-\Delta)u\in L^1(M)\cap L^\infty(M).$ Then
\begin{equation}\label{e66f}
(-\Delta)^{-1}(-\Delta u) = u \quad \text{ in }\; L^1(M)\cap L^\infty(M)\,.
\end{equation}
\end{lemma}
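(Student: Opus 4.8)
The plan is to realize $(-\Delta)^{-1}$ through the heat semigroup, exactly as provided by Lemma \ref{lemmaMM}, and to recognize the time integral of $T_t(-\Delta u)$ as a telescoping (fundamental-theorem-of-calculus) expression. Write $w:=(-\Delta)u$, which by hypothesis lies in $L^1(M)\cap L^\infty(M)$, so Lemma \ref{lemmaMM} applies to $w$ and gives
\[
\int_0^r T_t w\,dt \xrightarrow[r\to\infty]{} (-\Delta)^{-1}w \qquad\text{in } L^\infty(M).
\]
The core identity I would establish is that, for every $r>0$,
\[
\int_0^r T_t w\,dt = u - T_r u \qquad\text{in } L^1(M),
\]
which then identifies the left-hand side in the limit $r\to\infty$.

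To prove this identity, I would use that $\{T_t\}$ is strongly continuous on $L^1(M)$ and that $\lim_{h\to0}\frac{u-T_hu}{h}=(-\Delta)u=w$ in $L^1(M)$ (valid since $u,(-\Delta)u\in L^1(M)$), so that $r\mapsto T_ru$ is of class $C^1$ from $[0,\infty)$ to $L^1(M)$ with
\[
\frac{d}{dr}T_ru=\lim_{h\to0}T_r\!\Big(\frac{T_hu-u}{h}\Big)=-T_rw,
\]
the last step using boundedness of $T_r$ on $L^1(M)$. Since $t\mapsto T_tw$ is continuous from $[0,\infty)$ into $L^1(M)$, the Bochner fundamental theorem of calculus yields $T_ru-u=-\int_0^rT_tw\,dt$ in $L^1(M)$, i.e. the claimed identity; in particular it also holds $\mu$-a.e.

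Finally I would let $r\to\infty$. By the heat kernel bound \eqref{e64f} obtained in the proof of Lemma \ref{lemmaMM}, one has $\|T_ru\|_{L^\infty(M)}\leq \hat C\,\phi(r)^{-1}\|u\|_{L^1(M)}$, and for $r\geq R_0^2$, $\phi(r)=\sqrt r\,f(\sqrt r)\geq\sqrt r\,f(R_0)\to\infty$ since $f$ is positive and nondecreasing; hence $\|T_ru\|_{L^\infty(M)}\to0$. Combining this with the convergence supplied by Lemma \ref{lemmaMM} and the a.e. identity above (which lets one write $\int_0^rT_tw\,dt-u=-T_ru$),
\[
\big\|(-\Delta)^{-1}w-u\big\|_{L^\infty(M)}\leq\Big\|(-\Delta)^{-1}w-\int_0^rT_tw\,dt\Big\|_{L^\infty(M)}+\big\|T_ru\big\|_{L^\infty(M)}\xrightarrow[r\to\infty]{}0.
\]
Therefore $(-\Delta)^{-1}w=u$ $\mu$-a.e.; since $u\in L^1(M)\cap L^\infty(M)$ and $(-\Delta)^{-1}w\in L^\infty(M)$ by Lemma \ref{lemmaMM}, the equality \eqref{e66f} holds in $L^1(M)\cap L^\infty(M)$.

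I expect the main obstacle to be the rigorous justification of the differentiation/FTC identity $T_ru-u=-\int_0^rT_tw\,dt$: it hinges on $u$ belonging to the domain of the $L^1$-generator of $\{T_t\}$, so that $r\mapsto T_ru$ is genuinely $C^1$ into $L^1(M)$ and its derivative commutes with $T_r$. This is precisely guaranteed by the hypothesis $u,(-\Delta)u\in L^1(M)$ together with the identification of the generator of $\{T_t\}$ with $-(-\Delta)$; once this is granted, the remaining steps are a routine passage to the limit driven by the heat kernel estimate \eqref{e64f} of Lemma \ref{lemmaMM}.
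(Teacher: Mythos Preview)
Your proof is correct and follows essentially the same route as the paper: both establish the key identity $\int_0^r T_t(-\Delta u)\,dt = u - T_r u$ and then pass to the limit $r\to\infty$ using the heat kernel bound \eqref{e64f} together with Lemma \ref{lemmaMM}. The only cosmetic difference is that the paper obtains the identity by starting from the telescoping relation $\int_0^r T_t(u-T_hu)\,dt=\int_0^h T_tu\,dt-\int_r^{r+h}T_tu\,dt$ and letting $h\to0^+$, whereas you invoke directly the $C^1$ regularity of $r\mapsto T_ru$ and the Bochner fundamental theorem of calculus; these are two phrasings of the same semigroup computation.
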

\begin{proof} We adapt to the present situation the arguments of \cite[Proposition 6.2.12]{Jacob} (see also \cite[Lemma 5.2]{BBGM})\,.
Observe that for any $r>R_0^2$ we have
\[\int_0^r T_t (u- T_h u)\, dt = \int_0^h T_t u\, dt - \int_r^{r+h} T_t u\, dt\,.\]
Dividing by $h>0$ and passing to the limit as $h\to 0^+$ we get, for any $p\in [1,\infty)$,
\begin{equation}\label{e67f}
\lim_{h\to 0}\int_0^r T_t \left(\frac{u- T_h u}{h} \right)\, dt = \int_0^r T_t (-\Delta u)\, dt \quad \text{ in }\; L^p(M)
\end{equation}
and
\begin{equation}\label{e68f}
\lim_{h\to 0}\frac 1 h\int_0^h T_t u\, dt = u \quad \text{ and } \; \lim_{h\to 0} \frac 1h \int_r^{r+h} T_t u\, dt = T_r u \quad \text{ in }\; L^p(M);
\end{equation}
here we used the continuity of $u\mapsto \int_0^r T_t u \, dt$ in $L^q(M)$ for any $q\in [1, +\infty]$\,.
From \eqref{e67f} and \eqref{e68f} we obtain
\[\int_0^r T_t(-\Delta u)\, dt =  u - T_r u\,.\]
By virtue of \eqref{e64f} and \eqref{e60f}, passing to the limit as $r\to +\infty$  we obtain
\[\int_0^\infty T_t (-\Delta u)\, dt = \lim_{r\to \infty} (u- T_r u) = u \quad \text{ in } L^\infty(M)\,,\]
that gives \eqref{e66f}.
\end{proof}

We can obtain the following existence result of WDS by arguing as in the proof of \cite[Proposition 5.3]{BBGM}, taking advantage of Lemma \ref{lemmaMM2}.
\begin{proposition}\label{exiWDS}
Let $M$ be a $n$-dimensional Riemannian manifold with $\Ric\geq0$ and assume that conditions \eqref{noncollapsing}, \eqref{uniformvolume} and \eqref{integrablef} hold. Let $u_0\in L^1(M)\cap L^\infty(M), u_0\geq 0$. Let $u$ be the mild solution to \eqref{e23f} provided by Proposition \ref{eximildsol}. Then $u$ is a WDS to \eqref{e23f} in the sense of Definition \ref{defsol}.
\end{proposition}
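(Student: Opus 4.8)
The plan is to verify, one by one, the four requirements of Definition~\ref{defsol} for the mild solution $u$ produced by Proposition~\ref{eximildsol}; only the weak dual identity \eqref{e25f} is substantial, and the strategy there — following \cite[Proposition~5.3]{BBGM} — is to establish the integrated ``potential equation'' $(-\Delta)^{-1}u(t)=(-\Delta)^{-1}u_0-\int_0^t u^m$ and then test it. First I record the elementary consequences of Proposition~\ref{eximildsol}: the $L^p(M)$-nonexpansivity gives $\|u(t)\|_{L^1(M)}\le\|u_0\|_{L^1(M)}$ and $\|u(t)\|_{L^\infty(M)}\le\|u_0\|_{L^\infty(M)}$ for all $t\ge0$, hence $u(t)\in L^1(M)\cap L^\infty(M)$ and $u^m(t)\in L^1(M)\cap L^\infty(M)$ uniformly in $t$, with $\|u^m(t)\|_{L^1(M)}\le\|u_0\|_{L^\infty(M)}^{m-1}\|u_0\|_{L^1(M)}$; in particular $u^m\in L^\infty((0,T);L^1(M))\subset L^1((0,T);L^1_{\mathrm{loc}}(M))$, which is the second bullet of Definition~\ref{defsol}. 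For the first bullet, note that by Remark~\ref{15} the Green function is bounded off unit balls \emph{uniformly in the pole}: $G(x,x_0)\le C_0$ whenever $d(x,x_0)\ge1$, with $C_0$ depending only on the geometric constants; consequently $\|f\|_{L^1_{x_0,G}}\le(1+C_0)\|f\|_{L^1(M)}$ for every $x_0$, i.e.\ $L^1(M)\hookrightarrow L^1_G(M)$ continuously. Since the mild solution satisfies $u\in C([0,+\infty);L^1(M))$ with $u(0)=u_0$ (as is standard for the PME, being the Crandall--Liggett limit of the implicit Euler iterates), this embedding yields $u\in C([0,T];L^1_{x_0,G}(M))$ for every $x_0$, and $u(\cdot,0)=u_0$ a.e.\ is automatic; this settles the first and fourth bullets.

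The core step is the identity
\begin{equation*}
(-\Delta)^{-1}u(t)=(-\Delta)^{-1}u_0-\int_0^t u^m(\tau)\,{\rm d}\tau\qquad\text{in }L^\infty(M),\quad t\in[0,T).
\end{equation*}
I would prove it by returning to the implicit Euler scheme for the $m$-accretive operator $Av:=-\Delta(v^m)$ on $L^1(M)$: for $h>0$ put $u_h^{(0)}:=u_0$, $u_h^{(k)}:=(I+hA)^{-1}u_h^{(k-1)}$, and let $u_h$ be the piecewise-constant interpolant. By the elliptic maximum principle $0\le u_h^{(k)}\le\|u_0\|_{L^\infty(M)}$, so $u_h^{(k)}$, $(u_h^{(k)})^m$ and $\Delta\big((u_h^{(k)})^m\big)=\tfrac1h\big(u_h^{(k)}-u_h^{(k-1)}\big)$ all belong to $L^1(M)\cap L^\infty(M)$; applying the linear operator $(-\Delta)^{-1}$ and invoking Lemma~\ref{lemmaMM2} gives $(-\Delta)^{-1}u_h^{(k)}-(-\Delta)^{-1}u_h^{(k-1)}=-h\,(u_h^{(k)})^m$, and telescoping yields
\begin{equation*}
(-\Delta)^{-1}u_h(t)-(-\Delta)^{-1}u_0=-\int_0^{t_h}u_h^m(\tau)\,{\rm d}\tau,\qquad t_h:=h\lfloor t/h\rfloor\to t.
\end{equation*}
By the Crandall--Liggett theorem $u_h\to u$ in $C([0,T];L^1(M))$ with the uniform bound $0\le u_h\le\|u_0\|_{L^\infty(M)}$, hence $u_h^m\to u^m$ in $C([0,T];L^1(M))$ and the right-hand side converges in $L^1(M)$ to $-\int_0^t u^m$. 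For the left-hand side, $u_h(t)\to u(t)$ in $L^1(M)$ together with the uniform $L^\infty$ bound forces $u_h(t)\to u(t)$ in $L^p(M)$ for every $p<\infty$; splitting $(-\Delta)^{-1}w=\int_0^{R_0^2}T_\tau w\,{\rm d}\tau+\int_{R_0^2}^{\infty}T_\tau w\,{\rm d}\tau$ as in the proof of Lemma~\ref{lemmaMM}, the tail is $\le C\|w\|_{L^1(M)}$ by \eqref{e64f}, while interpolating \eqref{e64f} (which gives $\|T_\tau w\|_{L^\infty}\le C\tau^{-n/2}\|w\|_{L^1}$ for $\tau<R_0^2$) with $\|T_\tau w\|_{L^\infty}\le\|w\|_{L^\infty}$ yields $\|T_\tau w\|_{L^\infty}\le C\tau^{-n/(2p)}\|w\|_{L^p}$, which is integrable on $(0,R_0^2)$ as soon as $p>n/2$; thus $(-\Delta)^{-1}u_h(t)\to(-\Delta)^{-1}u(t)$ in $L^\infty(M)$. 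Identifying the two limits proves the identity.

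It then remains to test. Let $\psi$ be a test function as in Definition~\ref{defsol}. Substituting the identity above into the first integral in \eqref{e25f}, the contribution of $(-\Delta)^{-1}u_0$ drops out since $\int_0^T\partial_t\psi(t,x)\,{\rm d}t=0$ for every $x$; for the remaining contribution, Fubini's theorem (legitimate by the uniform $L^1\cap L^\infty$ bounds of the first paragraph — recall $(-\Delta)^{-1}u(t)\in L^\infty(M)$, $u^m(t)\in L^1(M)$ uniformly in $t$ — together with the support properties of $\psi$) followed by integration by parts in $t$, using $\psi(0,\cdot)=\psi(T,\cdot)=0$, gives
\begin{equation*}
-\int_0^T\!\!\int_M\partial_t\psi\,\Big(\int_0^t u^m(\tau)\,{\rm d}\tau\Big)\,{\rm d}\mu\,{\rm d}t=\int_0^T\!\!\int_M\psi\,u^m\,{\rm d}\mu\,{\rm d}t,
\end{equation*}
which is precisely \eqref{e25f}. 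Hence $u$ is a WDS to \eqref{e23f} in the sense of Definition~\ref{defsol}.

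The main obstacle is the passage to the limit in the second paragraph: upgrading the $L^1(M)$-convergence of the Euler iterates to $L^\infty(M)$-convergence of their $(-\Delta)^{-1}$-images, which is exactly where the short-time $L^p(M)\to L^\infty(M)$ smoothing of the heat semigroup — a consequence of the Li--Yau bound \eqref{e64f} valid under $\Ric\ge0$ — is indispensable. An alternative would be to exploit the fact that the mild solution of the PME is a strong solution, so that $\partial_t u=\Delta(u^m)$ holds a.e.\ in $L^1(M)$ and Lemma~\ref{lemmaMM2} could be applied for a.e.\ fixed $t$; but that route would demand $\Delta(u^m(t))\in L^\infty(M)$, i.e.\ a two-sided bound on $\partial_t u$, which is not immediately available in the present Riemannian setting — whence the preference for the discretization argument above.
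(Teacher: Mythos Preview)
Your argument is correct and follows precisely the route the paper indicates: it cites \cite[Proposition~5.3]{BBGM} and singles out Lemma~\ref{lemmaMM2} as the new ingredient, and your proof does exactly that---applying Lemma~\ref{lemmaMM2} at each implicit Euler step to obtain the discrete potential identity, then passing to the limit via the heat-kernel bounds of Lemma~\ref{lemmaMM}. The interpolation you use to upgrade $L^1$-convergence of $u_h(t)$ to $L^\infty$-convergence of $(-\Delta)^{-1}u_h(t)$ is the natural way to handle that limit in this setting, and the verification of the remaining bullets of Definition~\ref{defsol} via Remark~\ref{15} and the $L^p$-nonexpansivity is clean.
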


\begin{lemma}\label{lemma2f}
Let assumptions of Proposition \ref{stimapr} be satisfied. Let $u_0\in L^1(M)\cap L^\infty(M), u_0\geq 0$ and $u$ be a WDS to \eqref{e23f}. Then, for every $x_0\in M$,
\begin{equation}\label{e40f}
\|u(t)\|_{L^1_{x_0, G}}\leq C \|u_0\|_{L^1_{x_0, G}}\quad \text{ for all }\; t\geq 0,
\end{equation}
for some $C>0$, only depending on $c_1, c_2, c_3, \alpha, \beta, \gamma, n, R_0$. Furthermore, if $u$ and $v$ are two WDS of \eqref{e23f} corresponding to initial data $u_0$ and $v_0$, respectively, with $u\geq v$, then, for every $x_0\in M$,
\begin{equation}\label{e41f}
\|u(t)-v(t)\|_{L^1_{x_0, G}}\leq C \|u_0-v_0\|_{L^1_{x_0, G}}\quad \text{ for all }\; t\geq 0.
\end{equation}
In addition, for any $0<R\leq 1$ and $x_0\in M$, we have
\begin{equation}\label{e42f}
R^{n-2}\int_{M\setminus B_R(x_0)}u(x,t) G(x, x_0)\, d\mu(x) \leq \tilde C \|u(t)\|_{L^1_{x_0, G}}\quad \text{ for all }\; t\geq 0,
\end{equation}
for some $\tilde C>0$, only depending on $c_1, c_2, c_3, \alpha, \beta, \gamma, n, R_0$.
\end{lemma}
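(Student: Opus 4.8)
The plan is to deduce \eqref{e40f} and \eqref{e41f} from a single monotonicity property, while \eqref{e42f} is an elementary splitting estimate not involving the evolution at all. Fix $x_0\in M$ and let $\psi=\psi_{x_0}\in L^\infty_c(M)$, $\psi\geq0$, $\psi\not\equiv0$, be the function produced by Lemma \ref{lemma1f}, so that
\[
k_1\int_M\varphi\,(-\Delta)^{-1}\psi\,d\mu\leq\|\varphi\|_{L^1_{x_0,G}}\leq k_2\int_M\varphi\,(-\Delta)^{-1}\psi\,d\mu
\qquad\text{for all measurable }\varphi\geq0.
\]
Put $\Phi(t):=\int_M u(x,t)\,(-\Delta)^{-1}\psi(x)\,d\mu(x)$. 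The key claim is that $\Phi$ is finite, continuous on $[0,T]$, and nonincreasing. Finiteness and continuity follow from $u\in C([0,T];L^1_{x_0,G}(M))$ together with the first inequality above, which give $0\leq\Phi(t)\leq k_1^{-1}\|u(t)\|_{L^1_{x_0,G}}$ and $|\Phi(t)-\Phi(s)|\leq k_1^{-1}\|u(t)-u(s)\|_{L^1_{x_0,G}}$. For the monotonicity I would insert the product test function $\eta(t)\psi(x)$, with $\eta\in C^1_c((0,T))$, into the weak dual identity \eqref{e25f}; using that $(-\Delta)^{-1}$ is self-adjoint (equivalently $G$ is symmetric, the double integral being meaningful by Tonelli since $u,\psi\geq0$ and $(-\Delta)^{-1}\psi$ is dominated by a constant times $G(\cdot,x_0)$, hence $u(t)$-integrable), \eqref{e25f} becomes
\[
\int_0^T\eta'(t)\,\Phi(t)\,dt=\int_0^T\eta(t)\Big(\int_M u^m(\cdot,t)\,\psi\,d\mu\Big)\,dt,
\]
so that $\Phi'=-\int_M u^m\psi\,d\mu\leq0$ in the sense of distributions on $(0,T)$; a continuous function with nonpositive distributional derivative is nonincreasing, whence $\Phi(t)\leq\Phi(0)$ for every $t\in[0,T]$.

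Granting the claim, \eqref{e40f} is immediate by applying the two-sided bound of Lemma \ref{lemma1f} to $\varphi=u(t)$ and to $\varphi=u_0$:
\[
\|u(t)\|_{L^1_{x_0,G}}\leq k_2\,\Phi(t)\leq k_2\,\Phi(0)\leq\frac{k_2}{k_1}\,\|u_0\|_{L^1_{x_0,G}},
\]
so $C=k_2/k_1$ works (and $C\geq1$, since necessarily $k_1\leq k_2$, which covers also $t=0$). Estimate \eqref{e41f} follows from the same argument applied to the difference: $u$ and $v$ each satisfy \eqref{e25f}, so subtracting yields $\tfrac{d}{dt}(\Phi_u-\Phi_v)=-\int_M(u^m-v^m)\psi\,d\mu\leq0$, because $u\geq v\geq0$ forces $u^m\geq v^m$ and $\psi\geq0$; since $u(t)-v(t)\geq0$, Lemma \ref{lemma1f} gives
\[
\|u(t)-v(t)\|_{L^1_{x_0,G}}\leq k_2\big(\Phi_u(t)-\Phi_v(t)\big)\leq k_2\big(\Phi_u(0)-\Phi_v(0)\big)\leq\frac{k_2}{k_1}\|u_0-v_0\|_{L^1_{x_0,G}},
\]
with the same constant.

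For \eqref{e42f}, fix $0<R\leq1$ (recall $R_0\geq1$ and $n\geq3$ under the standing assumptions) and split
\[
\int_{M\setminus B_R(x_0)}u(x,t)\,G(x,x_0)\,d\mu(x)=\int_{B_1(x_0)\setminus B_R(x_0)}u\,G(\cdot,x_0)\,d\mu+\int_{M\setminus B_1(x_0)}u\,G(\cdot,x_0)\,d\mu.
\]
The second summand is at most $\|u(t)\|_{L^1_{x_0,G}}$ by the very definition of the norm. On the first region one has $R\leq d(x,x_0)\leq1\leq R_0$, so Lemma \ref{Greenest}-iii) (applied with $R_0$ in place of the parameter $R$ there) gives $G(x,x_0)\leq C_0\,d(x,x_0)^{-(n-2)}\leq C_0\,R^{-(n-2)}$, whence the first summand is at most $C_0R^{-(n-2)}\int_{B_1(x_0)}u(x,t)\,d\mu(x)\leq C_0R^{-(n-2)}\|u(t)\|_{L^1_{x_0,G}}$. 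Multiplying through by $R^{n-2}\leq1$ yields \eqref{e42f} with $\tilde C=C_0+1$.

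The only genuinely delicate point is the rigorous execution of the monotonicity step: one has to check that $\eta(t)\psi_{x_0}(x)$ is an admissible test function in Definition \ref{defsol} (it is, since $\psi_{x_0}\in L^\infty_c(M)\subset L^\infty_{\mathrm{loc}}(M)$), that $t\mapsto\int_M u^m(\cdot,t)\psi\,d\mu$ belongs to $L^1(0,T)$ (it does, as $u^m\in L^1((0,T);L^1_{\mathrm{loc}}(M))$ and $\psi$ has compact support), and that the continuity of $\Phi$ persists up to $t=0$, so that indeed $\Phi(0)=\int_M u_0\,(-\Delta)^{-1}\psi\,d\mu$; everything else is bookkeeping with the comparability estimates of Lemma \ref{lemma1f} and the Green function bounds of Lemma \ref{Greenest}.
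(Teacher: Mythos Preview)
Your treatment of \eqref{e40f} and \eqref{e41f} is correct and coincides with the paper's: the paper defers to \cite[Proposition~5.5]{BBGM}, whose proof is precisely the monotonicity argument you wrote out --- test with $\eta(t)\psi_{x_0}(x)$, use symmetry of $G$ to swap $(-\Delta)^{-1}$, and conclude that $\Phi$ is nonincreasing, then apply the two-sided bound of Lemma~\ref{lemma1f}.

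For \eqref{e42f} your route is genuinely different. The paper does \emph{not} split the domain; instead it applies the lower bound of Proposition~\ref{stimapr} directly: for the same test function $\psi$ as in Lemma~\ref{lemma1f} one has $(-\Delta)^{-1}\psi(x)\geq\gamma_1\,(d(x,x_0)^{n-2}\wedge1)\,G(x,x_0)$, and on $M\setminus B_R(x_0)$ with $R\leq1$ the factor $(d(x,x_0)^{n-2}\wedge1)$ is at least $R^{n-2}$, so
\[
\gamma_1 R^{n-2}\int_{M\setminus B_R(x_0)} u(t)\,G(\cdot,x_0)\,d\mu\leq\int_M u(t)\,(-\Delta)^{-1}\psi\,d\mu\leq k_1^{-1}\|u(t)\|_{L^1_{x_0,G}}.
\]
Your argument is more elementary: it bypasses Proposition~\ref{stimapr} altogether and uses only the pointwise upper bound of Lemma~\ref{Greenest}-iii) on the annulus $B_1(x_0)\setminus B_R(x_0)$ together with the definition of the norm on $M\setminus B_1(x_0)$. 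Both approaches give a constant $\tilde C$ depending only on the stated quantities; the paper's version has the mild advantage of reusing the same potential $(-\Delta)^{-1}\psi$ throughout the lemma, while yours shows that \eqref{e42f} is really a static estimate on the Green function and the weighted norm, independent of any potential-theoretic comparison.
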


\begin{proof}
In view of Lemma \ref{lemma1f}, \eqref{e40f} and \eqref{e41f} can be shown arguing as in the proof of \cite[Proposition 5.5]{BBGM}.
It remains to show \eqref{e42f}. To this purpose, for every fixed $x_0\in M$, let $\psi=\frac{1}{\Vol\big(B_{\frac{1}{2}}(x_0)\big)}\chi_{B_{\frac{1}{2}}(x_0)}$, as in the proof of Proposition \ref{lemma1f}. Since $u\geq0$, from Proposition \ref{stimapr} we deduce that, for every $R\in (0, 1]$,
\begin{equation}\label{e43f}
\begin{aligned}
\int_M u(x, t) (-\Delta)^{-1}\psi(x)\, d\mu(x) & \geq \gamma_1 \int_{M\setminus B_R(x_0)}(d(x,x_0)^{n-2}\wedge1)u(x, t) G(x, x_0)\, d\mu(x)\\
&\geq \gamma_1 R^{n-2} \int_{M\setminus B_R(x_0)} u(x, t) G(x, x_0)\, d\mu(x)\,.
\end{aligned}
\end{equation}
On the other hand, since $u(t)\in L^1_{x_0,G}(M)$ for every $t\geq0$ by \eqref{e40f}, by virtue of Lemma \ref{lemma1f} we obtain
\begin{equation}\label{e44f}
k_1 \int_M u(x, t) (-\Delta)^{-1}\psi(x)\, d\mu(x)\leq  \|u(t)\|_{L^1_{x_0, G}}
\end{equation}
for every $t\geq0$. Putting together \eqref{e43f} and \eqref{e44f} the thesis follows.
\end{proof}

\section{Basic estimates for Weak Dual Solutions}\label{basic}

We now prove some crucial estimates for WDS, in the case $u_0\in L^1(M)\cap L^\infty(M)$, which are the analogue of similar bounds proved in \cite{BBGM, BE, BV}.

\begin{proposition}\label{prop53}
Let $M$ be a $n$-dimensional Riemannian manifold with $\Ric\geq0$ and assume that conditions \eqref{noncollapsing}, \eqref{uniformvolume} and \eqref{integrablef} hold. Let $u_0\in L^1(M)\cap L^\infty(M), u_0\geq 0$. Let $u$ be the nonnegative WDS to \eqref{e23f}. Then
\begin{equation}\label{e26f}
\int_M u(x,t)\, G(x, x_0)\, d\mu(x) \leq \int_M u_0(x)\, G(x, x_0)\, d\mu(x) \quad \text{ for all } x_0\in M, t\geq 0\,,
\end{equation}
and
\begin{equation}\label{e27f}\begin{aligned}
\left(\frac{t_0}{t_1} \right)^{\frac m{m-1}}(t_1-t_0) u^m(t_0, x_0)&\leq \int_M[u(x, t_0)-u(x, t_1)]G(x, x_0)\, d\mu(x) \\ &\leq (m-1)\frac{t^{\frac m{m-1}}}{t_0^{\frac 1{m-1}}}u^m(x_0, t)
\end{aligned}\end{equation}
for a.e. $t_0, t_1, t\in \mathbb R$ with $0<t_0\leq t_1\leq t$ and for a.e. $x_0\in M\,.$
\end{proposition}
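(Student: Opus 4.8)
The plan is to run the weak–dual/potential machinery: test \eqref{e25f} against products $\eta(t)\varphi(x)$, integrate by parts in $t$, and then let $\varphi$ concentrate at a point $x_0$. Throughout I use that, being the mild solution (Propositions~\ref{eximildsol} and \ref{exiWDS}), $u$ satisfies $\|u(t)\|_{L^p(M)}\le\|u_0\|_{L^p(M)}$ for $p\in[1,\infty]$, $t\ge0$, and that $t\mapsto t^{1/(m-1)}u(x,t)$ is nondecreasing for a.e.\ $x$. First I would fix $\eta\in C^1_c((0,T))$ and $\varphi\in C_c(M)$ with $\varphi\ge0$, and take $\psi(t,x)=\eta(t)\varphi(x)$, which is admissible in Definition~\ref{defsol}. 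Since $u(t)\in L^1(M)\cap L^\infty(M)$, Lemma~\ref{lemmaMM} gives $(-\Delta)^{-1}\varphi,\,(-\Delta)^{-1}u(t)\in L^\infty(M)$, and by Tonelli and $G(x,y)=G(y,x)$ one has $\int_M\varphi\,(-\Delta)^{-1}u(t)\,d\mu=\int_M u(t)\,(-\Delta)^{-1}\varphi\,d\mu=:\Phi_\varphi(t)$. Then \eqref{e25f} becomes
\[
\int_0^T\eta'(t)\,\Phi_\varphi(t)\,dt=\int_0^T\eta(t)\Big(\int_M u^m(x,t)\varphi(x)\,d\mu(x)\Big)\,dt,
\]
i.e.\ $\Phi_\varphi$ has distributional derivative $-\int_M u^m(\cdot,t)\varphi\,d\mu$ on $(0,T)$, which lies in $L^1(0,T)$ because $u^m\in L^1((0,T);L^1_{\mathrm{loc}}(M))$ and $\operatorname{supp}\varphi$ is compact.

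Next I would upgrade the regularity of $\Phi_\varphi$. Fixing $x_0$ with $\operatorname{supp}\varphi\subset B_1(x_0)$ (the general case follows by a finite covering and linearity), Remark~\ref{remsigma} together with $(-\Delta)^{-1}\varphi\in L^\infty(M)$ yields a constant $C_\varphi>0$ with $(-\Delta)^{-1}\varphi\le C_\varphi$ on $B_1(x_0)$ and $(-\Delta)^{-1}\varphi\le C_\varphi\,G(\cdot,x_0)$ on $M\setminus B_1(x_0)$; hence $|\Phi_\varphi(t)-\Phi_\varphi(s)|\le C_\varphi\|u(t)-u(s)\|_{L^1_{x_0, G}}$, so $\Phi_\varphi\in C([0,T])$ since $u\in C([0,T];L^1_{x_0, G}(M))$. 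A continuous function with an $L^1$ a.e.\ nonpositive weak derivative is absolutely continuous and nonincreasing, whence for all $0\le t_0\le t_1\le T$
\[
\Phi_\varphi(t_0)-\Phi_\varphi(t_1)=\int_{t_0}^{t_1}\!\!\int_M u^m(x,s)\varphi(x)\,d\mu(x)\,ds\ \ge\ 0,
\]
and in particular $\Phi_\varphi(t)\le\Phi_\varphi(0)=\int_M u_0\,(-\Delta)^{-1}\varphi\,d\mu$, using continuity at $t=0$ and $u(0)=u_0$.

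For \eqref{e26f} I would then take $\varphi=\varphi_k\in C_c(M)$ with $\varphi_k\ge0$, $\int_M\varphi_k\,d\mu=1$, $\operatorname{supp}\varphi_k\subset B_{1/k}(x_0)$. The potentials $(-\Delta)^{-1}u(t)=\int_M G(\cdot,y)u(y,t)\,d\mu(y)$ and $(-\Delta)^{-1}u_0$ are everywhere finite (bounded on $B_1(x_0)$ by $\|u(t)\|_\infty\int_{B_1(x_0)}G(\cdot,x_0)\,d\mu<\infty$ via Lemma~\ref{Greenest}, and controlled outside $B_1(x_0)$ by the $L^1_{x_0, G}$-norm) and continuous, being distributional solutions of $-\Delta U=(\text{bounded})$, hence of class $C^{0,\alpha}_{\mathrm{loc}}$. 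Therefore $\int_M\varphi_k\,(-\Delta)^{-1}u(t)\,d\mu\to(-\Delta)^{-1}u(t)(x_0)$ and $\int_M\varphi_k\,(-\Delta)^{-1}u_0\,d\mu\to(-\Delta)^{-1}u_0(x_0)$ as $k\to\infty$; passing to the limit in $\Phi_{\varphi_k}(t)\le\Phi_{\varphi_k}(0)$ and recalling $(-\Delta)^{-1}v(x_0)=\int_M v(x)G(x,x_0)\,d\mu(x)$ gives \eqref{e26f}.

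Finally, for \eqref{e27f} I would fix $0<t_0\le t_1\le T$ and rewrite, by Tonelli, the identity of the previous step as $\int_M\varphi\,W\,d\mu=\int_M\varphi\,V\,d\mu$ for every $\varphi\in C_c(M)$, $\varphi\ge0$, where $W:=(-\Delta)^{-1}(u(t_0)-u(t_1))$ (finite and continuous, as above) and $V(x):=\int_{t_0}^{t_1}u^m(x,s)\,ds$; hence $W=V$ a.e.\ in $M$, and $W(x_0)=\int_M G(x,x_0)[u(x,t_0)-u(x,t_1)]\,d\mu(x)$. For a.e.\ $x$ and every $s\in[t_0,t_1]$, the monotonicity of $s\mapsto s^{1/(m-1)}u(x,s)$ gives $u(x,s)\ge(t_0/t_1)^{1/(m-1)}u(x,t_0)$ and, for $t_1\le t$, $u(x,s)\le(t/s)^{1/(m-1)}u(x,t)$; integrating $u^m(x,\cdot)$ over $(t_0,t_1)$ and using $\int_{t_0}^{t_1}s^{-m/(m-1)}\,ds\le(m-1)t_0^{-1/(m-1)}$ yields
\[
\Big(\tfrac{t_0}{t_1}\Big)^{\frac m{m-1}}(t_1-t_0)\,u^m(x,t_0)\ \le\ V(x)\ \le\ (m-1)\frac{t^{\frac m{m-1}}}{t_0^{\frac1{m-1}}}\,u^m(x,t)\qquad\text{for a.e. }x,
\]
which, combined with $W=V$ a.e., is exactly \eqref{e27f}. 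I expect the main obstacle to be the two concentration limits $\varphi_k\to\delta_{x_0}$, which rest on the finiteness and continuity of Green potentials of $L^1\cap L^\infty$ functions, together with the fact that on the right-hand side of \eqref{e27f} the function $V$ need not be continuous in $x$, so that one cannot pass $\varphi_k\to\delta_{x_0}$ pointwise there but must instead invoke the a.e.\ identification $W=V$.
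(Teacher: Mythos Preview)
Your argument is correct and follows the same overall strategy as the paper: test the weak dual formulation against products $\eta(t)\varphi(x)$, deduce that $t\mapsto \int_M u(t)\,(-\Delta)^{-1}\varphi\,d\mu$ is nonincreasing with derivative $-\int_M u^m\varphi\,d\mu$, and then concentrate $\varphi$ at $x_0$; for \eqref{e27f} you make explicit the time-monotonicity computation that the paper defers to \cite{BBGG}. The one genuine technical difference lies in how the concentration limit is taken. The paper works on the ``other side'' of the duality: it shows that $(-\Delta)^{-1}\eta_n(x)\to G(x,x_0)$ pointwise for $x\neq x_0$, dominated by $\gamma_2 G(\cdot,x_0)$ via Proposition~\ref{stimapr}, and then splits $M$ into $B_R(x_0)$ and its complement, using $L^p$-convergence of $(-\Delta)^{-1}\eta_n\to G(\cdot,x_0)$ on the ball (paired with $u(\tau)\in L^{p'}$) and dominated convergence outside. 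You instead invoke elliptic regularity to conclude that $(-\Delta)^{-1}u(t)$ is continuous in $x$ (as a distributional solution of $-\Delta U=u(t)\in L^\infty$), so that averaging against $\varphi_k$ converges to the pointwise value at $x_0$. Your route is shorter and more conceptual once one is willing to quote local $W^{2,p}$ regularity; the paper's route has the advantage of remaining entirely within the Green-function estimates already established in Section~\ref{apriori}, without appealing to external regularity theory.
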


\begin{proof}
We follow the line of arguments of \cite[Proposition 5.4]{BBGM}, where the fractional Laplacian is treated on Riemannian manifolds with Ricci bounded from below and fulfilling a certain Faber-Krahn inequality (that now is not guaranteed).

It is easily seen that
\begin{equation}\label{e28f}
\begin{aligned}
\int_M u(x, t_1)(-\Delta)^{-1}\eta(x) \,{\rm d}\mu(x) \leq \int_M u(x, t_0) (-\Delta)^{-1}\eta(x)\, d\mu(x)
\end{aligned}
\end{equation}
for all $\eta\in L^\infty_c(M)$, $\eta\geq 0$ and for all $0<t_0<t_1\,.$ In fact, for any $n\in \mathbb N,$ we insert in \eqref{e25f} the test function $\psi(x,t)=\phi_n(t)\eta(x),$ where $\{\phi_n\}\subset C^1_c((0, \infty))$ is a sequence fulfilling $\phi_n(t) \to \chi_{[t_0, t_1]}(t)$ a.e. in $(0, \infty)$ and $\phi_n'(t) \to \delta_{t_0}-\delta_{t_1},$ as $n\to \infty.$
We now show the continuity of $t\mapsto (-\Delta)^{-1}u(\cdot, t)$. In fact, let $\sigma_0>0$, $\sigma\in (0, \sigma_0], x_0\in M$ and $\psi:=\chi_{B_{\sigma}(x_0)}$. Then for all $x\in B_1(x_0)$, since $B_\sigma(x_0)\subset B_{\sigma+1}(x)$, we have
\begin{equation}\label{e46f}
\begin{aligned}
(-\Delta)^{-1}\psi(x)&=\int_{M} G(x, y) \psi(y)\, d\mu(y) \\ &\leq\int_{B_{\sigma+1}(x)} G(x, y) \, d\mu(y)\\ & \leq \begin{cases}
\frac{\omega_n c_2 n}{2\alpha}\left[\frac{R_0^n}{n-2}+\gamma\beta f(R_0)R_0^{n-1}\right](\sigma+1)^2 & \text{ if } \sigma<R_0-1\\
c_2\max\{\gamma,\tfrac{1}{2}\} h(\sigma+1)  & \text{ if } \sigma\geq R_0-1\,,
\end{cases}
\end{aligned}
\end{equation}
with $h$ defined as in \eqref{5}, see Lemma \ref{Greenest}. On the other hand, by virtue of Proposition \ref{stimapr} and Remark \ref{remsigma}, for all $x\in M\setminus B_1(x_0)$,
\begin{equation}\label{e47f}
(-\Delta)^{-1}\psi(x)\leq \gamma_2(\sigma_0)\operatorname{Vol}(B_\sigma(x_0))G(x, x_0).
\end{equation}
In view of \eqref{e46f} and \eqref{e47f}, by the same arguments as in \cite[Remark 2.1]{BBGM}, we can infer that $(-\Delta)^{-1}u\in C^0([0, T]; L^1_{\textrm{loc}}(M))$.

By using the continuity of $t\mapsto (-\Delta)^{-1}u(\cdot, t)$  and Fubini's theorem, and letting $n\to \infty,$ we obtain \eqref{e28f}.

Now, fix an arbitrary point $x_0\in M$. For every $n\in \mathbb N,$ let
\[\eta_n^{x_0}\equiv \eta_n:=\frac{\chi_{B_{\frac 1n}(x_0)}}{\Vol(B_{\frac 1n}(x_0))}\quad \text{ in } M\,.\]
We claim that
\begin{equation}\label{e33f}
\left|\int_{M}u(x, \tau)(-\Delta)^{-1}\eta_n(x)\, d\mu(x) -\int_{M }u(x, \tau)G(x, x_0)\,{\rm d}\mu(x) \right|\to 0,
\end{equation}
for a.e. $\tau>0$ as $n\to \infty.$ By means of \eqref{e33f}, \eqref{e26f} easily follows. In fact, since, for any $n\in \mathbb N,$ $\eta_n\geq 0$, $\eta_n\in L^\infty_c(M)$, we can use $\eta_n$ as test function in \eqref{e28f}. Hence, by setting $t:=t_1$,
\begin{equation}\label{e34f}
\begin{aligned}
\int_M u(x, t)(-\Delta)^{-1}\eta_n(x)\, d\mu(x) \leq \int_M u(x, t_0) (-\Delta)^{-1}\eta_n(x)\, d\mu(x)
\end{aligned}
\end{equation}
for all $0<t_0<t\,.$ In view of \eqref{e33f}, by letting $n\to \infty$ in \eqref{e34f} and then $t_0\to 0^+$ we get \eqref{e26f}.

It remains to show \eqref{e33f}. To this purpose, recall that for each $x\in M$, $y\mapsto G(x, y)$ is continuous in $M\setminus\{x\}$; furthermore, $\eta_n\to \delta_{x_0}$ in the sense of Radon measure, as $n\to \infty.$ Consequently, for any $x\in M\setminus\{x_0\}$,
\begin{equation}\label{e29f}
(-\Delta)^{-1}\eta_n(x)=\frac1{\Vol(B_{\frac 1n}(x_0))}\int_{B_{\frac 1n}(x_0)} G(x, y) \,{\rm d}\mu(y)\to G(x, x_0),
\end{equation}
as $n\to +\infty$. Moreover, in view of \eqref{e1f},
\begin{equation}\label{6}
0\leq (-\Delta)^{-1}\eta_n(x) \leq \gamma_2 G(x, x_0)\quad \text{ for all } x\in M\setminus\{x_0\}, n\in \mathbb N\,.
\end{equation}
Thus,
\begin{equation}\label{e31f}
u(x, \tau) (-\Delta)^{-1}\eta_n(x) \leq \gamma_2 u(x, \tau) G(x, x_0)\quad \text{ for all } x\in M\setminus\{x_0\}, n\in \mathbb N\,.
\end{equation}
Let $R >0$. Since $u(\cdot, \tau)\in L^1_{x_0, G}(M)$, thanks to \eqref{e31f} and the dominated convergence theorem we deduce that
\begin{equation}\label{e32f}
\left|\int_{M\setminus B_R(x_0)}u(x, \tau)(-\Delta)^{-1}\eta_n(x)\, d\mu(x) -\int_{M\setminus B_R(x_0)}u(x, \tau)G(x, x_0)\,{\rm d}\mu(x) \right|\to 0,
\end{equation}
as $n\to \infty.$

On the other hand, due to Lemma \ref{Greenest}, for any $R>0$ and $p\in \left[1, \frac{n}{n-2}\right)$, $G(\cdot, x_0)\in L^p(B_R(x_0))$. Therefore, by virtue of \eqref{e29f}, \eqref{6} and Lebesgue's convergence theorem,
\[(-\Delta)^{-1}\eta_n(x)\to G(x, x_0) \quad \text{ in } L^p(B_R(x_0)), \text{ as } n\to \infty\,,\]
for all $p\in \left[1, \frac{n}{n-2}\right)$. Hence, for any $R>0$ and $\tau>0$,
\begin{equation}\label{e30f}
\begin{aligned}
&\left|\int_{B_R(x_0)}u(x, \tau)(-\Delta)^{-1}\eta_n(x)\, d\mu(x) -\int_{B_R(x_0)}u(x, \tau)G(x, x_0)\,{\rm d}\mu(x) \right|\\
&\leq \|u(\tau)\|_{L^{p'}(M)}\|(-\Delta)^{-1}\eta_n - G(\cdot, x_0)\|_{L^p(B_R(x_0))}\to 0 \quad \text{ as } n\to \infty,
\end{aligned}
\end{equation}
$p'$ being the conjugate exponent of $p$. Putting together \eqref{e32f} and \eqref{e30f}, we obtain \eqref{e33f}, and the proof of \eqref{e26f} is complete.

\smallskip

Finally, \eqref{e27f} can be obtained by minor modifications in the proof of \cite[Proposition 3.3]{BBGG}.
\end{proof}

\begin{remark}\label{worst}
  The first inequality in \eqref{e27f}, and \eqref{e26f}, allow to show that, setting $t_0=t$, $t_1=2t_0$,
  \[
  u(t,x_0)\le \frac c{t^{\frac1m}} \left(\int_M u(x, t)G(x, x_0)\, d\mu(x)\right)^{\frac1m}\le \frac c{t^{\frac1m}} \left(\int_M u_0(x)G(x, x_0)\, d\mu(x)\right)^{\frac1m},\ \ \forall t>0,
  \]
 since the latter integral exists under our assumptions. If this holds uniformly in $x_0$, clearly we directly have a smoothing effect with time decay $t^{-\frac1m}$. Of course this is a weaker conclusion than the one of Theorem \ref{decayL1G}, in view of the definition of the $L^1_G(M)$ norm.

\end{remark}

%\begin{proposition}
%  Let $M$ be a Riemannian manifold with $\Ric\geq0$ satisfying \eqref{noncollapsing}, \eqref{uniformvolume} and \eqref{integrablef}. There exists $C>0$ such that for any initial datum $u_0\in L^1(M)\cap L^\infty(M)$, if $u$ is the WDS of
%  \begin{equation*}
%    \begin{cases}
%      u_t-\Delta u^m=0, & (t,x)\in(0,\infty)\times M, \\
%      u(0,x)=u_0(x), & x\in M,
%    \end{cases}
%  \end{equation*}
%  with $m>1$, then
%  \begin{equation}\label{decay1}
%    \|u(t)\|_{L^\infty(M)}\leq C \qquad\text{for every }t>0.
%  \end{equation}
%\end{proposition}

\section{Proof of the Main Results}\label{proofmain}

\begin{proof}[Proof of Theorem \ref{decay}]
  It is standard to show that if \eqref{7} and \eqref{13} hold for $u_0\in L^1(M)\cap L^\infty(M)$, $u_0\geq 0$, then by an approximation argument (through a monotone sequence of functions converging to the initial datum in $L^1(M)$) they also hold for a general $u_0\in L^1(M)$, $u_0\geq 0$, see e.g. \cite[Theorem 2.6]{BBGM}. Without loss of generality we will then restrict our attention to the case $u_0\in L^1(M)\cap L^\infty(M)$, $u_0\geq 0$.

  From \eqref{e27f}, with $t_1=2t_0$, for a.e. $x_0\in M$, $t_0>0$  we have
  \begin{align}
  \nonumber u^{m}(x_0,t_0)&\leq \frac{C}{t_0}\int_Mu(x,t_0)G(x,x_0)\,{\rm d}\mu(x)\\
  \label{11}  &=C\left(\frac{1}{t_0}\int_{B_R(x_0)}u(x,t_0)G(x,x_0)\,{\rm d}\mu(x)+\frac{1}{t_0}\int_{M\setminus B_R(x_0)}u(x,t_0)G(x,x_0)\,{\rm d}\mu(x)\right)\\
  \nonumber  &=:C\left(I+J\right)
  \end{align}
  for any $R\geq R_0$. By Young's inequality for every $\varepsilon>0$ we have
  \begin{align}
  \label{10}I&\leq \frac{\varepsilon}{m}\|u(t_0)\|_{L^\infty(M)}^m+\frac{C_\varepsilon}{t_0^\frac{m}{m-1}}\left(\int_{B_R(x_0)}G(x,x_0)\,{\rm d}\mu(x)\right)^\frac{m}{m-1}\\
   \nonumber &\leq\frac{\varepsilon}{m}\|u(t_0)\|_{L^\infty(M)}^m+\frac{C_\varepsilon}{t_0^\frac{m}{m-1}}\left(h(R)\right)^\frac{m}{m-1}\,,
  \end{align}
  where in the last inequality we used \eqref{5}. On the other hand by \eqref{8} we have
  \begin{align*}
  J&\leq \frac{1}{t_0}\|u(t_0)\|_{L^1(M)}\sup_{x\in M\setminus B_R(x_0)}G(x,x_0)\\
  &\leq \frac{\gamma c_2}{t_0}\|u(t_0)\|_{L^1(M)}\sup_{\rho\geq R} \frac{\rho f(\rho)}{\Vol(B_{\rho}(x_0))}\int_{\rho}^{+\infty}\frac{1}{f(t)}\,{\rm d}t\\
  &\leq \frac{\gamma^2 c_2}{t_0}\|u(t_0)\|_{L^1(M)}\frac{Rf(R)}{\Vol(B_R(x_0))}\int_{R}^{+\infty}\frac{1}{f(t)}\,{\rm d}t\,,
  \end{align*}
where in the last inequality we used \eqref{uniformvolume}. Then by the definition of $h,F$ and by Proposition \ref{eximildsol} we have
\begin{equation}\label{9}
  J\leq C\frac{h(R)}{t_0F(R)}\|u_0\|_{L^1(M)}\,.
\end{equation}
Plugging \eqref{10} and \eqref{9} into \eqref{11} we obtain
\[
u^{m}(x_0,t_0)\leq \varepsilon\frac{C}{m}\|u(t_0)\|_{L^\infty(M)}^m+C_\varepsilon\left(\frac{1}{t_0^\frac{m}{m-1}}\left(h(R)\right)^\frac{m}{m-1}+\frac{h(R)}{t_0F(R)}\|u_0\|_{L^1(M)}\right)
\]
and therefore choosing $\varepsilon>0$ small enough
\begin{equation}\label{12}
\|u(t_0)\|_{L^{\infty}(M)}\leq \frac{C}{t_0^\frac{1}{m-1}}(h(R))^\frac{1}{m-1}\left(1+\frac{t_0^\frac{1}{m-1}}{\theta(R)}\|u_0\|_{L^1(M)}\right)^\frac{1}{m}.
\end{equation}
Now if $t_0\geq\left(\frac{\theta(R_0)}{\|u_0\|_{L^1(M)}}\right)^{m-1}$ we can choose $R=\theta^{-1}(t_0^{\frac{1}{m-1}}\|u_0\|_{L^1(M)})\geq R_0$ in \eqref{12}, and thus we obtain \eqref{7}, with $K=\theta(R_0)^{m-1}$.

In order to prove \eqref{13} we start from \eqref{11}, with $0<R\leq1$. Then by Young's inequality for every $\varepsilon>0$ we have
  \begin{align}
  \label{16}I&\leq \frac{\varepsilon}{m}\|u(t_0)\|_{L^\infty(M)}^m+\frac{C_\varepsilon}{t_0^\frac{m}{m-1}}\left(\int_{B_R(x_0)}G(x,x_0)\,{\rm d}\mu(x)\right)^\frac{m}{m-1}\\
   \nonumber &\leq\frac{\varepsilon}{m}\|u(t_0)\|_{L^\infty(M)}^m+\frac{C_\varepsilon}{t_0^\frac{m}{m-1}}R^\frac{2m}{m-1}\,,
  \end{align}
where in the last inequality we used \eqref{14}. From \eqref{e42f} and Proposition \ref{eximildsol} we have
\begin{align}\label{17}
J&\leq \frac{C}{t_0R^{n-2}}\|u(t_0)\|_{L^1_{x_0, G}}\leq \frac{C}{t_0R^{n-2}}\|u(t_0)\|_{L^1(M)}\leq \frac{C}{t_0R^{n-2}}\|u_0\|_{L^1(M)}
\end{align}
since $G(x,x_0)$ is uniformly bounded for $d(x,x_0)\geq1$, see Remark \ref{15}. Plugging \eqref{16} and \eqref{17} into \eqref{11} we obtain
\[
u^{m}(x_0,t_0)\leq \varepsilon\frac{C}{m}\|u(t_0)\|_{L^\infty(M)}^m+C_\varepsilon\left(\frac{1}{t_0^\frac{m}{m-1}}R^\frac{2m}{m-1}+\frac{1}{t_0R^{n-2}}\|u_0\|_{L^1(M)}\right)
\]
and hence for small enough $\varepsilon>0$
\begin{equation*}
\|u(t_0)\|_{L^{\infty}(M)}\leq \frac{C}{t_0^\frac{1}{m-1}}R^\frac{2}{m-1}\left(1+\frac{t_0^\frac{1}{m-1}}{R^{\frac{n(m-1)+2}{m-1}}}\|u_0\|_{L^1(M)}\right)^\frac{1}{m}.
\end{equation*}
If $t_0\leq K\|u_0\|_{L^1(M)}^{-(m-1)}$ we can choose $R=K^{-\frac{1}{n(m-1)+2}}t_0^\frac{1}{n(m-1)+2}\|u_0\|_{L^1(M)}^\frac{m-1}{n(m-1)+2}\leq1$ and we obtain
\begin{equation*}
\|u(t_0)\|_{L^{\infty}(M)}\leq \frac{C}{t_0^\frac{n}{n(m-1)+2}}\|u_0\|_{L^1(M)}^\frac{2}{n(m-1)+2},
\end{equation*}
that is \eqref{13}, again with $K=\theta(R_0)^{m-1}$ as in \eqref{7}.
\end{proof}

\begin{proof}[Proof of Theorem \ref{decayL1G}]
  It is standard to show that also in this case if \eqref{16} and \eqref{17} hold for $u_0\in L^1(M)\cap L^\infty(M)$, $u_0\geq 0$, then by an approximation argument they also hold for a general $u_0\in L^1_G(M)$, $u_0\geq 0$, see e.g. \cite[Theorem 2.7]{BBGM}. Without loss of generality we will then restrict our attention to the case $u_0\in L^1(M)\cap L^\infty(M)$, $u_0\geq 0$.

  From \eqref{11} and \eqref{10} we immediately see that for every $R\geq R_0$ and every $\varepsilon>0$ we have
  \begin{equation*}
    u^{m}(x_0,t_0)\leq \varepsilon\frac{C}{m}\|u(t_0)\|_{L^\infty(M)}^m+C_\varepsilon\left(\frac{1}{t_0^\frac{m}{m-1}}\left(h(R)\right)^\frac{m}{m-1}+\frac{1}{t_0}\int_{M\setminus B_R(x_0)}u(x,t_0)G(x,x_0)\,{\rm d}\mu(x)\right)
  \end{equation*}
  for a.e. $x_0\in M$, $t_0>0$. Choosing $R=R_0\geq1$ and recalling \eqref{e24f} we then obtain
  \begin{align*}
    u^{m}(x_0,t_0)&\leq \varepsilon\frac{C}{m}\|u(t_0)\|_{L^\infty(M)}^m+C_\varepsilon\left(\frac{1}{t_0^\frac{m}{m-1}}\left(h(R_0)\right)^\frac{m}{m-1}+\frac{1}{t_0}\|u(t_0)\|_{L^1_{x_0,G}}\right)\\
    &\leq\varepsilon\frac{C}{m}\|u(t_0)\|_{L^\infty(M)}^m+C_\varepsilon\left(\frac{1}{t_0^\frac{m}{m-1}}+\frac{1}{t_0}\|u_0\|_{L^1_{x_0,G}}\right)\,,
  \end{align*}
  where in the last inequality we used \eqref{e40f}. Choosing $\varepsilon>0$ small enough thus we have
  \begin{align*}
  \|u(t_0)\|_{L^\infty(M)}&\leq C\left(\frac{1}{t_0^\frac{m}{m-1}}+\frac{1}{t_0}\|u_0\|_{L^1_G(M)}\right)^\frac{1}{m}\\
  &= \frac{C}{t_0^\frac{1}{m}}\|u_0\|_{L^1_G(M)}^\frac{1}{m}\left(\frac{1}{t_0^\frac{1}{m-1}\|u_0\|_{L^1_G(M)}}+1\right)^\frac{1}{m}\leq \frac{C}{t_0^\frac{1}{m}}\|u_0\|_{L^1_G(M)}^\frac{1}{m}
  \end{align*}
  for every $t_0\geq\|u_0\|_{L^1_G(M)}^{-(m-1)}$, that is \eqref{20}.

  The proof of \eqref{21} is similar to that of \eqref{13} in Theorem \ref{decay}. Here one just has to observe that by \eqref{e42f} and \eqref{e40f} for a.e. $x_0\in M$, $t_0>0$ we have
  \begin{align}
\label{22}J&=\frac{1}{t_0}\int_{M\setminus B_R(x_0)}u(x,t_0)G(x,x_0)\,{\rm d}\mu(x)\\
\nonumber&\leq \frac{C}{t_0R^{n-2}}\|u(t_0)\|_{L^1_{x_0, G}}\leq \frac{C}{t_0R^{n-2}}\|u_0\|_{L^1_{x_0,G}}\leq \frac{C}{t_0R^{n-2}}\|u_0\|_{L^1_{G}}
\end{align}
for every $0<R\leq1$. Then the proof proceeds as in Theorem \ref{decay}, with \eqref{22} replacing \eqref{17}, and we obtain \eqref{21}.
\end{proof}

\section{Applications and Optimality}\label{optimal}
We now apply our main results to more specific classes of manifolds $M$ satisfying our assumptions, proving Corollary \ref{almosteuc}.
\begin{proof}[Proof of Corollary \ref{almosteuc}]  %If $M$ is a Riemannian manifold with $\Ric\geq0$ satisfying \eqref{noncollapsing}, \eqref{uniformvolume} and \eqref{integrablef} with $$f(R)=R^{k-1}(\log R)^\delta$$ for some $k\in(2,n]$, $\delta\in \mathbb{R}$ and if $$\Vol(B_R(x))\geq CR^\lambda$$ for some $\lambda\in(2,n]$ and every large enough $R>0$, $x\in M$, then
  We start from item (a). Under the running assumptions, we have:

  \begin{align*}
    \int_R^\infty \frac{1}{f(t)}\,{\rm d}t&\asymp\frac{1}{R^{k-2}(\log(R))^\delta}, \\
    h(R) & \asymp R^2, \\
    F(R) & \geq CR^\lambda.
  \end{align*}
  Therefore $$\theta(R):=R^{\lambda+\frac{2}{m-1}}\leq CF(R)(h(R))^\frac{1}{m-1}$$ and $\theta(s)^{-1}=s^\frac{m-1}{(m-1)\lambda+2}$. If $u_0\in L^1(M)$, $u_0\geq0$, by Theorem \ref{decay}, see also Remark \ref{rem2}, we immediately deduce the assertion
  %$$\|u(t)\|_{L^\infty}(M)\leq \frac{C}{t^\frac{\lambda}{(m-1)\lambda+2}}\|u_0\|_{L^1(M)}^\frac{2}{(m-1)\lambda+2},$$
%  for all large enough $t>0$.

  %In particular if $\lambda=n$, and thus $f(R)=R^{n-1}$, we recover the Euclidean rate of decay.

  \medskip
  To deal with item (b), we notice that under the running assumptions we have:
  \begin{align*}
    \int_R^\infty \frac{1}{f(t)}\,{\rm d}t&\asymp\frac{1}{(\log(R))^{\delta-1}}, \\
    h(R) & \asymp R^2\log(R), \\
    F(R) & \geq CR^\lambda(\log(R))^\sigma.
  \end{align*}
 Then $$\theta(R):=R^{\lambda+\frac{2}{m-1}}(\log(R))^{\sigma+\frac{1}{m-1}}\leq CF(R)(h(R))^\frac{1}{m-1}$$
 and
 $$G(s):=\theta^{-1}(s)=\begin{cases}e^{\frac{b}{a}W_0\left(\frac{a}{b}s^\frac{1}{b}\right)}&\textrm{if } b\neq0\\ s^\frac{m-1}{(m-1)\lambda+2}&\textrm{if }b=0\end{cases}$$
 with
 $$a=\lambda+\frac{2}{m-1},\qquad b=\sigma+\frac{1}{m-1},$$
 where $W_0$ is the principal branch of the Lambert function, or product logarithm function, i.e. the inverse on $[-1,\infty)$ of the function $w(x)=xe^x$. If $u_0\in L^1(M)$, $u_0\geq0$, by Theorem \ref{decay}, see also Remark \ref{rem2}, we then have the assertion.
  %$$\|u(t)\|_{L^\infty}(M)\leq \frac{C}{t^\frac{1}{m-1}}\left(\theta^{-1}(t^\frac{1}{m-1}\|u_0\|_{L^1(M)})\right)^\frac{2}{m-1}\left(\log\left(\theta^{-1}(t^\frac{1}{m-1}\|u_0\|_{L^1(M)})\right)\right)^\frac{1}{m-1},$$
%  for all large enough $t>0$.
%

\end{proof}

%\begin{example}\label{explicit2}

%\end{example}

\subsection{Optimality}\label{opt}
In this Subsection, our goal is to show that in Theorem \ref{decay}, the rate of decay of WDS with initial datum in $L^1(M)$ for large times is optimal, in general.

To this aim, let us consider the manifold $M$ as in Example \ref{exproduct} with $X=\mathbb R^k$, $k\geq3$. Then $M$ satisfies \eqref{noncollapsing}, \eqref{uniformvolume} and \eqref{integrablef} with $$f(R)=R^{k-1}$$
and $$\Vol(B_R(x))\asymp R^k$$
for every $x\in M$ and $R>0$ large enough.

For each $x\in M$, we suppose that $x\equiv (\bar x, x')$, where $\bar x\in \mathbb R^k$ and $x'$ are local coordinates in $N$.
Put $r\equiv |\bar x|_{\mathbb R^k}=\sqrt{\bar x_1^2+\ldots \bar x_k^2}$. Now, let $v$ be the Barenblatt solution to the porous medium equation in $\mathbb R^k$, that is
\[v(x,t):=\frac 1{t^{\alpha}}\left[A-\frac{c}{t^{2\beta}}r^2\right]_+^{\frac 1{m-1}},\quad x\in M, t>0,\]
where
$$\alpha=\frac{k}{k(m-1)+2}, \quad \beta=\frac{\alpha}{k}, \quad c=\frac{\alpha(m-1)}{2mk}\,,
$$
while $A>0$ is arbitrary, and $$\int_{\mathbb R^k} v(x,t)\, d\bar x=A\qquad \text{ for all }\; t>0\,.$$
Consequently, for some $\bar A>0$, depending only on $A$ and $\Vol(N)$,
\begin{equation*}\label{e50f}
\int_M v(x, t)\,{\rm d}\mu(x) = \bar A \qquad \text{ for all }\; t>0\,.
\end{equation*}

Let $\epsilon>0$. It is easily seen that $v$ is a solution of
\[v_t = \Delta v^m \quad \text{ in }\; M\times (\epsilon, \infty)\,.\]
Let
\[u(x,t):=v(x, t+\epsilon), \quad x\in M, t\geq 0\,.\]
Thus $u$ is solution of \eqref{e23f} with $u_0(x)=v(x, \epsilon)$ for any $x\in M\,.$ We can apply Theorem \ref{decay} with
\begin{align*}
h(R)&\asymp R^2,\\
\theta(R)&\asymp R^{k+\frac{2}{m-1}},\\
\theta^{-1}(s)&\asymp s^\frac{m-1}{(m-1)k+2}
\end{align*}
which ensures that
\begin{equation}\label{e51f}
\|u(t)\|_{\infty} \leq \frac{C}{t^\alpha}
\end{equation}
for all large enough $t>0$.
%Observe that, for some $C>0$,
%\[\operatorname{supp}\, u(\cdot, t) = \operatorname{supp}\, v(\cdot, t+\epsilon)=\{x\in M\,:\, r(x)\leq C (t+\epsilon)^\beta\} \quad \text{ for all }\; t>0\,.\]
%Consequently, for some $\bar C=\bar C(\epsilon)>0$,
%\begin{equation}\label{e51f}
%\Vol\left(\operatorname{supp}\, u(\cdot, t)\right)\leq \bar C t^{\beta k}=\bar C t^\alpha \quad \text{ for all }\; t>0\,.
%\end{equation}
%By \eqref{e50f},
%\[\bar A=\int_M u(x,t) d\mu(x) \leq \|u(t)\|_\infty \Vol\left(\operatorname{supp}\, u(\cdot, t)\right)\quad \text{ for all }\; t>0\,.
%\]
%Therefore, from \eqref{e51f} we can deduce that
%\begin{equation}\label{e52f}
%\|u(t)\|_\infty\geq \frac{\bar A}{\Vol(\operatorname{supp}\, u(\cdot, t))}\geq \frac 1{\bar C t^\alpha} \quad \text{ for all }\; t>0\,.
%\end{equation}
On the other hand, by virtue of the very definition of $u$, it is easily seen that, for some $\bar C>0$,
\begin{equation}\label{e52f}
\|u(t)\|_\infty\geq \frac {\bar C }{t^\alpha} \quad \text{ for all }\; t>1\,.
\end{equation}
Thanks to \eqref{e51f} and \eqref{e52f} we can conclude that the result in Theorem \ref{decay} on the rate of decay of WDS with initial datum in $L^1(M)$ for large times is optimal.

\bigskip
\bigskip

\noindent{\bf Acknowledgements.} The authors would like to thank M. Muratori for helpful discussions.

\noindent  The authors are members of GNAMPA-INdAM. Morever, the authors acknowledge that this work is part of the PRIN projects 2022 ''Partial differential equations and related geometric-functional inequalities", ref. 20229M52AS (GG) and ''Geometric-analytic methods for PDEs and applications", ref. 2022SLTHCE (DDM, FP), financially supported by the EU, in the framework of the "Next Generation EU initiative". DDM and FP are  partially supported by GNAMPA projects 2024.

%-------------------------------------------------------%
%                                                       %
% 						BIBLIOGRAPHY    				%
%                                                       %
%-------------------------------------------------------%

\end{document}